\font\sevency=wncyr7  \def\sh{\hbox{\sevency X}}
\newtheorem{teigi}{Definition}[section]
\newtheorem{teiri}[teigi]{Theorem}
\newtheorem{hodai}[teigi]{Lemma}
\newtheorem{kei}[teigi]{Corollary}
\newtheorem{mondai}[teigi]{Problem}
\title{Explicit formulas of the relation between multiple zeta functions of Arakawa-Kaneko and Euler-Zagier types}
\author{Naho Kawasaki 
\footnote{E-mail address: naho.kawasaki@hirosaki-u.ac.jp} \\
Graduate School of Science and Technology, Hirosaki University, \\
3 Bunkyo, Hirosaki, Aomori, 036-8561, Japan
}
\date{}
\begin{document}
\maketitle
\begin{abstract}
Multiple zeta functions of Arakawa-Kaneko and Euler-Zagier types are known as generalizations of the Riemann zeta function. 
In 2018, Kaneko and Tsumura proved that the multiple zeta functions of Arakawa-Kaneko type can be expressed as a ${\mathbb Q}$-linear combination of products of the ones of Euler-Zagier type and multiple zeta values. 
In this paper, we give explicit formulas to the above mentioned relation. 
Moreover, as a key of its proof, we also give certain functional equations among multi-polylogarithm functions explicitly. 
\footnotetext[1]{Key words and phrases: multiple zeta functions, multiple zeta values, multi-polylogarithms and 2-posets.}
\footnotetext[2]{2020 Mathematics Subject Classification Numbers: Primary 11M32. Secondly 
06A11, 11M99.}
\end{abstract}

\section{Introduction}


A finite sequence ${\bf k}=(k_1,\ldots,k_r)$ of positive integers (resp. non-negative integers) is called a {\it (positive) index} (resp. {\it non-negative index}). 
An index ${\bf k}=(k_1,\ldots,k_r)$ is {\it admissible} if $k_r\geq 2$. 
For any non-negative index ${\bf e}=(e_1,\ldots,e_r)$, 
the {\it weight} and the {\it depth} of ${\bf e}$ are given by ${\rm wt}({\bf e})=e_1+\cdots+e_r$ and ${\rm dep}({\bf e})=r$, respectively. 
If an index ${\bf k}$ is of weight ${\rm wt}({\bf k})$, we also say the multiple zeta value $\zeta({\bf k})$ is of weight ${\rm wt}({\bf k})$. 
The depth $r$ can be $0$ and the unique index of depth $0$, namely the empty sequence, is denoted by $\varnothing$. 
 The index $\varnothing$ is also regarded as an admissible index.  

Multiple zeta values are given by 
$$
\zeta ( {\bf k} ) = \zeta ( k_1,\ldots,k_r ) := \sum_{0<m_1<\cdots<m_r} \frac{1}{m_1^{k_1}\cdots m_r^{k_r}} 
$$
for an admissible index ${\bf k}=(k_1,\ldots,k_r)$. 
We set $\zeta(\varnothing)=1$. 
It is known that there are many relations among multiple zeta values.  
For example, we recall duality formula for multiple zeta values. 
Let $a_1,\ldots,a_n, b_1,\ldots,b_n$ be positive integers. 
For an admissible index 
$${\bf k}=(\{1\}^{a_1-1},b_1+1,\ldots,\{1\}^{a_n-1},b_n+1),$$
 we define an admissible index ${\bf k}^\dagger$ by
$$
{\bf k}^\dagger=(\{1\}^{b_n-1},a_n+1,\ldots,\{1\}^{b_1-1},a_1+1).
$$
Then we have $\zeta({\bf k})=\zeta({\bf k}^\dagger)$. 
This formula is called duality formula for multiple zeta values. 
Positive integers $a_1,b_1,\ldots,a_n,b_n$ appears in the order on the left-hand side of the duality formula, 
and in the reverse order on the right-hand side.


Arakawa and Kaneko, in their work \cite{ak_fcn}, introduced and studied the function
\begin{equation}\label{intrep_ak}
\xi({\bf k};s ) =  \xi (k_1,\cdots,k_r;s) := \frac{1}{\Gamma(s)} \int_{0}^{\infty} \frac{
{\rm Li}(k_1,\ldots,k_r; 1-e^{-t} )}{e^t-1} t^{s-1} dt 
\end{equation}
for an index ${\bf k}=(k_1,\ldots,k_r)$ where ${\rm Li}(k_1,\ldots,k_r;z)$ is the multi-polylogarithm function defined by
\begin{equation*}
{\rm Li}({\bf k};z) =
{\rm Li}(k_1,\ldots,k_r;z) := 
\sum_{0<m_1<\cdots<m_r} \frac{z^{m_r}}{m_1^{k_1}\cdots m_r^{k_r}}
 \quad(|z|<1). 
\end{equation*}
In this paper, we call $\xi({\bf k};s)$ {\it multiple zeta functions of Arakawa-Kaneko type}. 
The integral \eqref{intrep_ak} converges for ${\rm{Re}}(s)>0$. 
When ${\bf k}=(1)$, $\xi_1(s)$ is equal to $s \zeta(s+1)$. 
The function $\xi({\bf k};s )$ continues to an entire functions of $s$.  

Arakawa and Kaneko also studied the single variable function
\begin{align}\label{sumrep_ez}
\zeta({\bf k};s)=\zeta ( k_1,\ldots,k_r;s ) := \sum_{0<m_1<\cdots<m_r} \frac{1}{m_1^{k_1}\cdots m_r^{k_r}m_{r+1}^s}
\end{align}
for an index ${\bf k}=(k_1,\ldots,k_r)$, for the purpose of establishing a connection between poly-Bernoulli numbers introduced in \cite{kaneko_pbn} and multiple zeta values. 
In this paper, we call $\xi({\bf k};s)$ {\it multiple zeta functions of Euler-Zagier type}. 
The function \eqref{sumrep_ez} is absolutely convergent for ${\rm Re}(s)>1$. 
When $r=0$, the function $\zeta ( \varnothing;s )$ is understood to be the Riemann zeta function $\zeta(s)$. 
The terminology "multiple zeta functions of Euler-Zagier type" is used for the multi-variable function
$$
\zeta_r(s_1,\ldots,s_r) := \sum_{0<m_1<\cdots<m_r} \frac{1}{m_1^{s_1}\cdots m_r^{s_r}}.  
$$
in general. 
The function $\zeta_r(s_1,\ldots,s_r)$ has been studied by many researchers (for example, \cite{hof_1992,z_1994}). 
Evaluating $s_i=k_i$ for $1\leq i \leq r$, we obtain $\zeta_{r+1}(k_1,\ldots,k_r,s_{r+1})=\zeta(k_1,\ldots,k_r;s_{r+1})$.

The multiple zeta function $\zeta ( k_1,\ldots,k_r;s )$ of Euler-Zagier type has integral representation
\begin{equation}\label{intrep_ez}
\zeta(k_1,\ldots,k_r;s ) =  \frac{1}{\Gamma(s)} \int_{0}^{\infty} \frac{
{\rm Li}(k_1,\ldots,k_r; e^{-t} )}{e^t-1} t^{s-1} dt
\end{equation}
for ${\rm{Re}}(s)>1$. 
This representation gives an analytic continuation to a meromorphic function of $s$ in the whole complex plane (\cite[Proposition 2, Theorem 3]{ak_fcn})  . 


Arakawa and Kaneko proved that the function $\xi(\{1\}^{a-1},b+1;s)$ can be written in terms of multiple zeta functions $\zeta({\bf k};s)$ of Euler-Zagier type and that the value $\xi(\{1\}^{a-1},b+1;m+1)$ has a relation like duality formula for multiple zeta values. 
 
\begin{teiri}[{\cite[Theorem 8]{ak_fcn}}]\label{akThm8}
For any positive integer $a$, non-negative integer $b$ and complex number $s$, we have
\begin{align*}
\xi(\{1\}^{a-1},b+1;s) 
&=\sum_{j=0}^{b-1}(-1)^j\zeta(\{1\}^{a-1},b+1-j)\zeta(\{1\}^{j};s) \\
&\quad +(-1)^{b} \sum_{\substack{e_1,\ldots,e_b,d \geq 0 \\ e_1+\cdots+e_{b}+d=a}} \binom{s+d-1}{d}
 \zeta(e_1+1,\ldots,e_b+1;s+d). 
\end{align*}
\end{teiri}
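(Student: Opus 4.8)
The plan is to distill the analytic core of the identity into an explicit functional equation for the multi-polylogarithm — expressing ${\rm Li}(\{1\}^{a-1},b+1;1-z)$ through multi-polylogarithms evaluated at $z$ — and then to run it through the integral representations \eqref{intrep_ak} and \eqref{intrep_ez} under the substitution $z=e^{-t}$.

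First I would prove that for all integers $a\ge 1$, $b\ge 0$ and all $z\in(0,1)$,
\begin{align*}
{\rm Li}(\{1\}^{a-1},b+1;1-z)
&=\sum_{j=0}^{b-1}(-1)^j\zeta(\{1\}^{a-1},b+1-j)\,{\rm Li}(\{1\}^j;z)\\
&\quad+(-1)^b\sum_{\substack{e_1,\ldots,e_b,d\ge 0\\ e_1+\cdots+e_b+d=a}}\frac{(-\log z)^d}{d!}\,{\rm Li}(e_1+1,\ldots,e_b+1;z),
\end{align*}
where empty-index polylogarithms are read as $1$ and $(-\log z)^d/d!={\rm Li}(\{1\}^d;1-z)$, so that the right-hand side is a $\mathbb Z$-linear combination of products of multi-polylogarithm values and (admissible) multiple zeta values. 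This I would establish by induction on $b$ with $a$ fixed. The base case $b=0$ is the classical formula ${\rm Li}(\{1\}^a;w)=(-\log(1-w))^a/a!$ applied at $w=1-z$. For the inductive step, all functions are real-analytic on $(0,1)$; using $\frac{d}{dw}{\rm Li}(\ldots,k_r;w)=\frac1w{\rm Li}(\ldots,k_r-1;w)$ for $k_r\ge 2$ and $=\frac1{1-w}{\rm Li}(\ldots;w)$ for $k_r=1$, I would verify that if $F_b(z)$ denotes either side of the identity then $F_b'(z)=-\frac1{1-z}F_{b-1}(z)$. For the left-hand side this is immediate; for the right-hand side it comes out after reindexing, the key point being that the $\frac1z$-terms produced by differentiating $(-\log z)^d/d!$ cancel exactly the $\frac1z$-terms produced by differentiating ${\rm Li}(e_1+1,\ldots,e_b+1;z)$ in a slot with $e_b\ge 1$, while the slot $e_b=0$ contributes precisely $-\frac1{1-z}$ times the right-hand side at $b-1$. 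By the induction hypothesis $F_{b-1}^{\rm left}=F_{b-1}^{\rm right}$, so $F_b^{\rm left}-F_b^{\rm right}$ is constant on $(0,1)$; letting $z\to 0^+$ (which kills the whole second sum and every $j\ge 1$ term of the first) both sides tend to $\zeta(\{1\}^{a-1},b+1)$, whence the constant is $0$.

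Next, setting $z=e^{-t}$ (so $1-z=1-e^{-t}$ and $-\log z=t$), for ${\rm Re}(s)>1$ I would substitute the displayed identity into \eqref{intrep_ak} and interchange the finite sum with the integral, obtaining
\begin{align*}
\xi(\{1\}^{a-1},b+1;s)
&=\sum_{j=0}^{b-1}(-1)^j\zeta(\{1\}^{a-1},b+1-j)\cdot\frac1{\Gamma(s)}\int_0^\infty\frac{{\rm Li}(\{1\}^j;e^{-t})}{e^t-1}t^{s-1}\,dt\\
&\quad+(-1)^b\sum_{\substack{e_1,\ldots,e_b,d\ge 0\\ e_1+\cdots+e_b+d=a}}\frac1{d!}\cdot\frac1{\Gamma(s)}\int_0^\infty\frac{{\rm Li}(e_1+1,\ldots,e_b+1;e^{-t})}{e^t-1}t^{s+d-1}\,dt.
\end{align*}
By \eqref{intrep_ez} (used with parameter $s$, respectively $s+d$, both of real part $>1$) the first integral is $\zeta(\{1\}^j;s)$ and the second is $\Gamma(s+d)\,\zeta(e_1+1,\ldots,e_b+1;s+d)$; since $\Gamma(s+d)/(\Gamma(s)\,d!)=\binom{s+d-1}{d}$, this is exactly the asserted formula on ${\rm Re}(s)>1$. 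Because $\xi(\{1\}^{a-1},b+1;s)$ is entire while the right-hand side is meromorphic in $s$, the formula then propagates to all $s\in\mathbb C$ by analytic continuation.

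The only genuine obstacle is the cancellation in the inductive step, i.e.\ checking that $\frac{d}{dz}$ of the right-hand side of the polylog identity telescopes as stated; the passage through the integral representations afterwards is routine. (One could instead derive the polylog identity from the iterated-integral representation of ${\rm Li}$ via the change of variables $t\mapsto 1-t$ and a decomposition of the resulting path through the point $1$, but that route forces one to handle shuffle-regularization of the divergent iterated integrals, whereas in the differentiation argument the additive constant is fixed automatically by the behaviour as $z\to 0^+$.)
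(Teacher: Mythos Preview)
Your proof is correct and follows essentially the same route the paper takes: the polylogarithm identity you establish by induction on $b$ is precisely Xu's formula \eqref{cexu2.8} (with $z\mapsto 1-z$), which the paper notes yields Theorem~\ref{akThm8}, and your passage from this identity to the $\xi$-formula via the substitution $z=e^{-t}$ and the integral representations \eqref{intrep_ak}, \eqref{intrep_ez} is exactly how the paper derives Theorem~\ref{mainThm1} from Theorem~\ref{mainThm2}. Your differentiation-plus-boundary-value argument for the functional equation likewise mirrors the paper's proof of Theorem~\ref{mainThm2} in the general case.
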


\begin{teiri}[{\cite[Theorem 9 (2)]{ak_fcn}}]\label{akThm9(2)}
For any positive integer $a,m$ and non-negative integer $b$, we have 
\begin{align}
&\xi(\{1\}^{a-1},b+1;m+1) - (-1)^{b} \xi(\{1\}^{m-1},b+1;a+1) \nonumber\\
\label{akDual}
&= \sum_{j=0}^{b-1} (-1)^j \zeta(\{1\}^{a-1},b+1-j) \zeta(\{1\}^{m-1},j+2).
\end{align}
\end{teiri}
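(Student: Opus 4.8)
The plan is to pass to Chen iterated integrals on $[0,1]$ and reduce the statement to a short identity in the shuffle algebra. Write $\omega_0=\frac{dt}{t}$ and $\omega_1=\frac{dt}{1-t}$, and for $\epsilon_1,\dots,\epsilon_k\in\{0,1\}$ set $\int_0^1\omega_{\epsilon_1}\cdots\omega_{\epsilon_k}=\int_{0<t_1<\cdots<t_k<1}\omega_{\epsilon_1}(t_1)\cdots\omega_{\epsilon_k}(t_k)$, which converges when $\epsilon_1=1$ and $\epsilon_k=0$; recall the shuffle relation $\int_0^1 w_1\cdot\int_0^1 w_2=\int_0^1(w_1\,\sh\,w_2)$. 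With these conventions ${\rm Li}(\{1\}^{a-1},b+1;u)=\int_0^u\omega_1^{a}\omega_0^{b}$ and $\zeta(\{1\}^{p-1},q+1)=\int_0^1\omega_1^{p}\omega_0^{q}$ for $p,q\ge 1$. The first step is to establish, for every positive integer $n$, the representation
\begin{equation*}
\xi(\{1\}^{a-1},b+1;n)=\int_0^1\bigl(\omega_1^{\,n-1}\,\sh\,\omega_1^{a}\omega_0^{b}\bigr)\,\omega_0 .
\end{equation*}
I would obtain this by the substitution $u=1-e^{-t}$ in \eqref{intrep_ak}, which sends $\frac{dt}{e^t-1}$ to $\frac{du}{u}$ and $t$ to $-\log(1-u)$, giving $\xi(\{1\}^{a-1},b+1;n)=\frac{1}{(n-1)!}\int_0^1(-\log(1-u))^{n-1}\,{\rm Li}(\{1\}^{a-1},b+1;u)\,\frac{du}{u}$; then, writing $\frac{(-\log(1-u))^{n-1}}{(n-1)!}=\int_0^u\omega_1^{\,n-1}$ and ${\rm Li}(\{1\}^{a-1},b+1;u)=\int_0^u\omega_1^{a}\omega_0^{b}$, multiplying these two integrals over $[0,u]$ by the shuffle product, and integrating the outer form $\frac{du}{u}=\omega_0$, one gets the formula.

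Evaluating the representation at $n=m+1$, and again with $(a,n)$ replaced by $(m,a+1)$, yields
\begin{equation*}
\xi(\{1\}^{a-1},b+1;m+1)-(-1)^{b}\xi(\{1\}^{m-1},b+1;a+1)=\int_0^1\Bigl[\bigl(\omega_1^{m}\,\sh\,\omega_1^{a}\omega_0^{b}\bigr)\omega_0-(-1)^{b}\bigl(\omega_1^{a}\,\sh\,\omega_1^{m}\omega_0^{b}\bigr)\omega_0\Bigr].
\end{equation*}
On the other hand $\zeta(\{1\}^{a-1},b+1-j)=\int_0^1\omega_1^{a}\omega_0^{b-j}$ and $\zeta(\{1\}^{m-1},j+2)=\int_0^1\omega_1^{m}\omega_0^{j+1}$, so the shuffle product turns the right-hand side of \eqref{akDual} into $\int_0^1\sum_{j=0}^{b-1}(-1)^{j}\bigl(\omega_1^{a}\omega_0^{b-j}\,\sh\,\omega_1^{m}\omega_0^{j+1}\bigr)$. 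Thus the theorem is equivalent to the identity in $\mathbb{Q}\langle\omega_0,\omega_1\rangle$
\begin{equation*}
\bigl(\omega_1^{m}\,\sh\,\omega_1^{a}\omega_0^{b}\bigr)\omega_0-(-1)^{b}\bigl(\omega_1^{a}\,\sh\,\omega_1^{m}\omega_0^{b}\bigr)\omega_0=\sum_{j=0}^{b-1}(-1)^{j}\bigl(\omega_1^{a}\omega_0^{b-j}\,\sh\,\omega_1^{m}\omega_0^{j+1}\bigr).
\end{equation*}

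To prove this last identity, put $f(p,q):=\omega_1^{a}\omega_0^{p}\,\sh\,\omega_1^{m}\omega_0^{q}$. The deconcatenation recursion for the shuffle product, applied to the final letters, gives $f(p,q)=\bigl(f(p-1,q)+f(p,q-1)\bigr)\omega_0$ whenever $p,q\ge 1$. Applying this to every summand of $\sum_{j=0}^{b-1}(-1)^{j}f(b-j,j+1)$ and reindexing, the two resulting alternating sums telescope and leave exactly $\bigl(f(b,0)-(-1)^{b}f(0,b)\bigr)\omega_0$; since $f(b,0)=\omega_1^{a}\omega_0^{b}\,\sh\,\omega_1^{m}=\omega_1^{m}\,\sh\,\omega_1^{a}\omega_0^{b}$ and $f(0,b)=\omega_1^{a}\,\sh\,\omega_1^{m}\omega_0^{b}$, this is precisely the left-hand side. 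So the combinatorial core is short. The step I expect to require the most care is the first one: justifying the substitution and the shuffle-product manipulation rigorously, i.e.\ verifying absolute convergence of all the integrals near $u=0$ and near $u=1$ — where $(-\log(1-u))^{n-1}$ is unbounded but remains integrable against $\frac{du}{u}$ because ${\rm Li}(\{1\}^{a-1},b+1;u)$ stays bounded as $u\to 1$ — so that the representation of $\xi$ is a genuine iterated integral on $[0,1]$. As an independent check, the same conclusion can be reached from Theorem~\ref{akThm8} evaluated at $s=m+1$ and at $s=a+1$ together with the duality formula for multiple zeta values, which recasts \eqref{akDual} as an equivalent symmetry among the double sums appearing in that theorem.
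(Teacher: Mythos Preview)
Your argument is correct and coincides with the paper's combinatorial proof (\S4.2) of Corollary~\ref{mainCor} in the special case $n=1$: your iterated-integral representation of $\xi(\{1\}^{a-1},b+1;n)$ is exactly the $2$-poset identity~\eqref{xi_poset}, and your telescoping recursion $f(p,q)=(f(p-1,q)+f(p,q-1))\omega_0$ is the content of Lemma~\ref{idou}(1) with ${\bf k}=(\{1\}^{a})$ and ${\bf l}=(\{1\}^{m})$. One minor point: the boundedness of ${\rm Li}(\{1\}^{a-1},b+1;u)$ as $u\to1$ that you invoke for convergence holds only for $b\ge1$; when $b=0$ the polylogarithm equals $(-\log(1-u))^{a}/a!$, but the resulting iterated integral $\int_0^1(\omega_1^{m}\,\sh\,\omega_1^{a})\omega_0$ is still admissible (it ends in $\omega_0$) and hence convergent, and the shuffle identity in that case is trivial by commutativity.
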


On the left-hand side of \eqref{akDual}, integers $a,b,m$ in the first term appear in the order, and in the reverse order in the second term like duality formula.

Furthermore, encouraged by Theorem \ref{akThm8}, \ref{akThm9(2)}, Arakawa and Kaneko gave the following problems.
\begin{mondai}[{\cite[\S 5 (i), (ii), (iii)]{ak_fcn}}]\label{akConj}
\begin{description}
\item[(i)] For a positive index $(k_1,\ldots,k_r)$, is the function $\xi(k_1,\ldots,k_r;s)$ also expressed by multiple zeta functions as in Theorem \ref{akThm8}? 
One may deduce as a consequence further relations among multiple zeta values. 
\item[(ii)] The above problem (i) will be affirmatively answered if one can give a suitable functional equation for the multi-polylogarithm functions under the substitution $z\mapsto 1-z$. 
What can one expect about functional equations of ${\rm Li}(k_1,\ldots,k_r;z)$?
\item[(iii)] May the values at positive integers $\xi({\bf k};m+1)$ enjoy a certain kind of `duality' like the one in \ref{akThm9(2)}?    
\end{description}
\end{mondai}

Arakawa and Kaneko also obtained functional equations for the multi-polylogarithm functions of depth $1$; 
\begin{align}
{\rm Li}(k;1-z)
&=(-1)^{k-1}\sum_{i=1}^{k-1}{\rm Li}(\{1\}^{i-1},2,\{1\}^{k-1-i};z) \nonumber\\
\label{ak_dep1}
&\quad+\sum_{j=0}^{k-2}(-1)^j\zeta(k-j){\rm Li}(\{1\}^j;z)-(-1)^{k-1}\log(z){\rm Li}(\{1\}^{k-1};z)
\end{align}
(\cite[p.202, Remarks (i)]{ak_fcn}). 
However, their proof of Theorem \ref{akThm8} does not use such functional equations.

There are several results concerning Problem \ref{akConj}. 
The following lemma and theorem give partial answers to Problem \ref{akConj} (i), (ii). 
\begin{teiri}[{\cite[Lemma 3.5]{kt_fcn}}]\label{ktLem3.5}
For any index ${\bf k}$, we have 
$$
{\rm Li}({\bf k};1-z)
=\sum_{{\bf k}',d\geq0} c_{{\bf k}}({\bf k}';d) {\rm Li}(\{1\}^d;1-z)  {\rm Li}({\bf k}';z),
$$
where the sum on the right runs over indices ${\bf k}'$ and non-negative integers $d$ that satisfy ${\rm wt}({\bf k}')+d \leq {\rm wt}({\bf k})$, 
and $c_{{\bf k}}({\bf k}';d) $ is a $\mathbb{Q}$-linear combination of multiple zeta values of weight ${\rm wt}({\bf k})-{\rm wt}({\bf k}')-d$. 
We understand ${\rm Li}(\varnothing; z)=1$, ${\rm wt}(\varnothing)=0$ for the empty index $\varnothing$, 
and the constant $1$ is regarded as a multiple zeta value of weight $0$. 
\end{teiri}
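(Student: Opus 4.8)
\emph{Proof sketch.}
The case ${\bf k}=\varnothing$ is trivial, so let ${\bf k}=(k_1,\dots,k_r)$ with $w={\rm wt}({\bf k})\ge 1$. I would pass to iterated integrals and exploit the involution $t\mapsto 1-t$. Put $\omega_0=\tfrac{dt}{t}$, $\omega_1=\tfrac{dt}{1-t}$; then
$$
{\rm Li}(k_1,\dots,k_r;z)=\int_0^z \omega_1\omega_0^{\,k_1-1}\omega_1\omega_0^{\,k_2-1}\cdots\omega_1\omega_0^{\,k_r-1},
$$
an iterated integral over $[0,z]$ with leftmost letter next to $0$. Since $t\mapsto 1-t$ interchanges $\omega_0\leftrightarrow\omega_1$, reverses the word, and maps $[0,1-z]$ onto $[z,1]$, one gets (a short computation; one checks e.g.\ ${\rm Li}(1;1-z)=-\log z$ and the reflection formula for ${\rm Li}(2;\cdot)$)
$$
{\rm Li}({\bf k};1-z)=\int_z^1 W,\qquad W=\omega_1^{\,k_r-1}\omega_0\,\omega_1^{\,k_{r-1}-1}\omega_0\cdots\omega_1^{\,k_1-1}\omega_0 ,
$$
a word of length $w$ whose rightmost letter (next to $1$) is $\omega_0$. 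The $r=1$ case of the desired identity is \eqref{ak_dep1}.

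Next I would decompose the path from $z$ to $1$ as $[z,0]\cdot[0,1]$ and apply Chen's path-composition formula,
$$
{\rm Li}({\bf k};1-z)=\sum_{W=UV}\Bigl(\int_z^0 U\Bigr)\Bigl(\int_0^1 V\Bigr),
$$
the sum over all ways to write $W$ as a prefix $U$ (next to $z$) followed by a suffix $V$ (next to $1$). Path reversal turns $\int_z^0 U$ into $(-1)^{|U|}\int_0^z\overleftarrow{U}$, which, if $\overleftarrow{U}$ starts with $\omega_1$, equals $\pm\,{\rm Li}({\bf k}';z)$ for the index ${\bf k}'$ coded by $\overleftarrow{U}$; and if $V$ starts with $\omega_1$, then $\int_0^1 V$ is a multiple zeta value. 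For the remaining factorizations (those for which $\overleftarrow U$ or $V$ starts with $\omega_0$) one uses shuffle-regularized iterated integrals: with the standard normalization $\int_0^z\omega_0=\log z$ (tangential base point at $0$), which forces $\int_0^1\omega_0=0$, each such $\int_0^z(\cdots)$ becomes a polynomial in $\log z$ with convergent multi-polylogarithms in $z$ as coefficients, and each $\int_0^1(\cdots)$ becomes a $\mathbb Q$-linear combination of multiple zeta values. Because the shuffle product preserves word length and $|U|+|V|=w$, every term of the resulting expansion has the shape $(\text{m.z.v.})\cdot(\log z)^{d}\cdot{\rm Li}({\bf k}';z)$ with the multiple zeta value of weight $w-{\rm wt}({\bf k}')-d$ — exactly the weight condition in the statement. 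Finally ${\rm Li}(\{1\}^d;x)=\frac{(-\log(1-x))^d}{d!}$, so $(\log z)^d=(-1)^d d!\,{\rm Li}(\{1\}^d;1-z)$; thus the powers of $\log z$ are precisely the allowed factors ${\rm Li}(\{1\}^d;1-z)$, and they carry the only $(1-z)$-dependence.

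The step that needs care — the main obstacle — is making the regularization rigorous: one must carry shuffle-regularized iterated integrals and tangential base points at $0$ and $1$ throughout, check that path-composition survives regularization (the normalizations at $0$ for $\int_0^z$ and for $\int_0^1$ must be chosen compatibly), and verify that the regularization parameters cancel in the final sum — which they do, since ${\rm Li}({\bf k};1-z)$ is an honest function. A more elementary variant proceeds by induction on $w$: $\frac{d}{dz}{\rm Li}({\bf k};1-z)$ equals $-\frac{1}{1-z}{\rm Li}(k_1,\dots,k_r-1;1-z)$ if $k_r\ge2$ and $-\frac1z{\rm Li}(k_1,\dots,k_{r-1};1-z)$ if $k_r=1$, so the induction hypothesis applies to an index of weight $w-1$; integrating back from $z=1$ (where ${\rm Li}({\bf k};0)=0$) reduces the step to evaluating $\int_1^z\frac{1}{1-t}{\rm Li}(\{1\}^d;1-t){\rm Li}({\bf k}';t)\,dt$ and $\int_1^z\frac1t{\rm Li}(\{1\}^d;1-t){\rm Li}({\bf k}';t)\,dt$, which one handles by repeated integration by parts together with the shuffle product for products of multi-polylogarithms in $t$; but here the boundary terms at $t=1$ involve $\zeta(1)$-type divergences that must cancel, so the same regularization issue reappears.
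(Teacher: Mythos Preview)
This statement is quoted from Kaneko--Tsumura and the present paper does not give an independent proof of it as such; what the paper proves is the explicit refinement Theorem~\ref{mainThm2}, and its argument is precisely your ``more elementary variant'': induction on ${\rm wt}({\bf k})$ via the differential equation $\tfrac{d}{dz}{\rm Li}({\bf k};1-z)$ (Lemma~\ref{hodai2.3}), followed by integration. The boundary/divergence issue you flag is handled not by shuffle regularization but by a concrete limit computation (Lemma~\ref{hodai_lim}), which identifies the integration constant in the non-admissible case; in the admissible case the constant is simply $\zeta({\bf k})$ obtained by letting $z\to 0$. So the paper's route is the inductive one, made rigorous without any appeal to tangential base points.

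Your main proposal --- the substitution $t\mapsto 1-t$ followed by Chen's path-composition along $z\to 0\to 1$ --- is a genuinely different and valid approach. It is more conceptual and gives the qualitative statement (existence of an expansion with the correct weight grading) in one stroke, but it buys that economy at the cost of the regularization machinery you correctly identify as the crux. By contrast, the paper's inductive method is more elementary, avoids regularization entirely, and has the further payoff of producing explicit coefficients $c_{\bf k}({\bf k}';d)$ (this is the whole point of Theorem~\ref{mainThm2}); extracting those same explicit formulas from your Chen-decomposition approach would require tracking all the shuffle-regularized pieces carefully, which is possible but less direct. Your assessment that the inductive variant faces ``$\zeta(1)$-type divergences that must cancel'' is slightly pessimistic: as the paper shows, one can sidestep regularization altogether by organizing the induction so that only honest limits (Lemma~\ref{hodai_lim}) are needed.
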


\begin{teiri}[{\cite[Theorem 3.6]{kt_fcn}}]\label{ktThm3.6}
Let ${\bf k}$ be any index. 
The function $\xi({\bf k};s)$ can be written in terms of multiple zeta functions as
$$
\xi({\bf k};s)
=\sum_{{\bf k}',d\geq0} c_{{\bf k}}({\bf k}';d) \binom{s+d-1}{d}\zeta({\bf k}';s+d).
$$
Here, the sum on the right runs over indices ${\bf k}'$ and non-negative integers $d$ that satisfy ${\rm wt}({\bf k}')+d \leq {\rm wt}({\bf k})$, 
and $c_{{\bf k}}({\bf k}';d) $ is a $\mathbb{Q}$-linear combination of multiple zeta values which is given in Theorem \ref{ktLem3.5}.
\end{teiri}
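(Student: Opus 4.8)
The plan is to substitute the functional equation of Theorem~\ref{ktLem3.5} directly into the integral representation \eqref{intrep_ak} of $\xi({\bf k};s)$. First I would set $z=e^{-t}$ (so that $1-z=1-e^{-t}\in(0,1)$ for $t>0$) in the identity of Theorem~\ref{ktLem3.5}, which gives
\begin{equation*}
{\rm Li}({\bf k};1-e^{-t})=\sum_{{\bf k}',d\geq 0}c_{\bf k}({\bf k}';d)\,{\rm Li}(\{1\}^d;1-e^{-t})\,{\rm Li}({\bf k}';e^{-t}).
\end{equation*}
The key simplification is the elementary identity ${\rm Li}(\{1\}^d;w)=\frac{1}{d!}\bigl(-\log(1-w)\bigr)^d$ (proved by differentiating under $\frac{d}{dw}{\rm Li}(\{1\}^d;w)=\frac{1}{1-w}{\rm Li}(\{1\}^{d-1};w)$), which for $w=1-e^{-t}$ yields ${\rm Li}(\{1\}^d;1-e^{-t})=t^d/d!$. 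Plugging this in, multiplying by $t^{s-1}/(e^t-1)$ and integrating over $(0,\infty)$, I obtain
\begin{equation*}
\Gamma(s)\,\xi({\bf k};s)=\sum_{{\bf k}',d\geq 0}\frac{c_{\bf k}({\bf k}';d)}{d!}\int_0^\infty\frac{{\rm Li}({\bf k}';e^{-t})}{e^t-1}\,t^{s+d-1}\,dt .
\end{equation*}

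Next I would recognize each integral on the right via the integral representation \eqref{intrep_ez}, which applies to \emph{any} index ${\bf k}'$ (admissible or not), since the Euler--Zagier function $\zeta({\bf k}';s)$ in \eqref{sumrep_ez} carries the extra variable: thus $\int_0^\infty\frac{{\rm Li}({\bf k}';e^{-t})}{e^t-1}t^{s+d-1}dt=\Gamma(s+d)\,\zeta({\bf k}';s+d)$. Dividing by $\Gamma(s)$ and using $\frac{\Gamma(s+d)}{d!\,\Gamma(s)}=\frac{s(s+1)\cdots(s+d-1)}{d!}=\binom{s+d-1}{d}$ gives exactly the claimed formula, with the same range ${\rm wt}({\bf k}')+d\leq{\rm wt}({\bf k})$ and the same coefficients $c_{\bf k}({\bf k}';d)$ inherited from Theorem~\ref{ktLem3.5}.

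The points that need care are analytic rather than combinatorial. The interchange of the sum and the integral is harmless because the sum over $({\bf k}',d)$ with ${\rm wt}({\bf k}')+d\leq{\rm wt}({\bf k})$ is \emph{finite}. For the individual integrals one checks, using ${\rm Li}({\bf k}';e^{-t})=O(t)$ as $t\to 0^+$ when ${\bf k}'\neq\varnothing$ (and at most a power of $-\log t$ otherwise), together with the at-most-polynomial growth of ${\rm Li}({\bf k}';e^{-t})$ and the exponential decay of $1/(e^t-1)$ as $t\to\infty$, that they converge absolutely for ${\rm Re}(s)>1$, so the whole computation is legitimate there. Finally, since $\xi({\bf k};s)$ is entire and the right-hand side is a finite sum of products of the polynomials $c_{\bf k}({\bf k}';d)\binom{s+d-1}{d}$ with the meromorphic functions $\zeta({\bf k}';s+d)$, the identity propagates to all $s\in\mathbb{C}$ by analytic continuation; as a by-product this shows the apparent poles of the $\zeta({\bf k}';s+d)$ are cancelled by the zeros of the binomial factors. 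I expect the only mild obstacle is organizing this convergence and continuation step cleanly, since the algebraic content is carried entirely by Theorem~\ref{ktLem3.5} and the Gamma-function bookkeeping above.
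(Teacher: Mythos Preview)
Your approach is correct and coincides with the paper's own derivation (compare the proof of Theorem~\ref{mainThm1} from Theorem~\ref{mainThm2}): substitute $z=e^{-t}$, use ${\rm Li}(\{1\}^d;1-e^{-t})=t^d/d!$, and invoke the integral representation \eqref{intrep_ez} together with $\Gamma(s+d)/(d!\,\Gamma(s))=\binom{s+d-1}{d}$. One small slip in your convergence discussion: ${\rm Li}({\bf k}';e^{-t})$ is in general only $O(1)$ (or $O(|\log t|^{J})$ for non-admissible ${\bf k}'$) as $t\to 0^+$, not $O(t)$, but since $1/(e^t-1)\sim 1/t$ this still yields absolute convergence for ${\rm Re}(s)>1$ and the rest of your argument goes through.
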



Xu generalized \eqref{ak_dep1} and obtained duality-like formulas referred in Problem \ref{akConj} (iii) for certain indices.  
In particular, we can obtain Theorem \ref{akThm8} from Theorem \ref{xu(2.8)}. 
\begin{teiri}[{\cite[(2.8)]{cexu}}\label{xu(2.8)}]
For any positive integer $a$ and non-negative integer $b$, we have
\begin{align}
&{\rm Li}(\{1\}^{a-1},b+1;z) \nonumber\\
&=\sum_{j=0}^{b-1}(-1)^j \zeta(\{1\}^{a-1},b+1-j){\rm Li}(\{1\}^j;1-z) \nonumber\\
\label{cexu2.8}
&\quad+ (-1)^{b} \sum_{\substack{e_1+\cdots+e_b+d=a \\ e_1,\ldots,e_b,d\geq0}}
{\rm Li}(\{1\}^d;z){\rm Li}(e_1+1,\ldots,e_b+1;1-z) 
\end{align}
\end{teiri}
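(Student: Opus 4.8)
The plan is to prove the functional equation \eqref{cexu2.8} by iterated integration, exploiting the iterated-integral (Drinfeld/Chen) representation of the multi-polylogarithm together with the change of variables $z \mapsto 1-z$ and the reversal-of-path identity. Recall that for an index of the shape $(\{1\}^{a-1},b+1)$ one has
\[
{\rm Li}(\{1\}^{a-1},b+1;z)=\int_{0}^{z}\underbrace{\omega_1\cdots\omega_1}_{a}\underbrace{\omega_0\cdots\omega_0}_{b},
\]
where $\omega_0=\frac{dt}{t}$ and $\omega_1=\frac{dt}{1-t}$, and the integral is the iterated integral along $[0,z]$. The first step is to split the path of integration from $0$ to $z$ at the point $1-z$ (assuming for definiteness $z\in(0,1)$, the general case following by analytic continuation), i.e. write $\int_0^z = \int_0^{1-z} + \int_{1-z}^{z}$ and apply the path-composition formula for iterated integrals, which expresses $\int_0^z \eta_1\cdots\eta_{a+b}$ as a sum over all ways of splitting the word $\eta_1\cdots\eta_{a+b}$ into an initial segment integrated over $[0,1-z]$ and a terminal segment integrated over $[1-z,z]$.

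The second step is to reinterpret each of the two pieces. For the initial segment over $[0,1-z]$, substituting $t\mapsto 1-t$ interchanges $\omega_0$ and $\omega_1$ and reverses the path to $[z,1]$ — more usefully, one keeps it as an iterated integral from $0$ to $1-z$, which by definition is again a multi-polylogarithm evaluated at $1-z$ (after the $0\leftrightarrow1$ swap of the differential forms, so a word in $\omega_0,\omega_1$ starting appropriately becomes ${\rm Li}(\cdots;1-z)$). For the terminal segment over $[1-z,z]$, the substitution $t\mapsto 1-t$ sends this path to $[1-z,z]$ again but with $\omega_0\leftrightarrow\omega_1$; combining the two descriptions of $\int_{1-z}^{z}$ and using the shuffle/path-reversal relation $\int_{1-z}^{z} = -\int_{z}^{1-z}$ lets one re-expand $\int_{1-z}^z$ in terms of integrals anchored at the endpoints $0$ and $1$, i.e. in terms of ${\rm Li}(\cdots;z)$, ${\rm Li}(\cdots;1-z)$ and honest multiple zeta values $\zeta(\cdots)$ (the fully convergent words contribute the $\zeta$'s; divergent words must be regularized, and the regularization constants are themselves $\mathbb{Q}$-linear combinations of MZVs).

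The third step is bookkeeping: one collects all the terms produced by the two splittings, sorts them by which factor is a ${\rm Li}(\{1\}^{d};z)$, which is a ${\rm Li}(\cdots;1-z)$, and which is a constant, and checks that the coefficients coincide with the explicit right-hand side of \eqref{cexu2.8}. Here the special shape $(\{1\}^{a-1},b+1)$ is what makes the combinatorics tractable: the word is $\omega_1^{a}\omega_0^{b}$, so an initial segment is always $\omega_1^{i}$ (giving ${\rm Li}(\{1\}^{i};\cdot)$) or $\omega_1^{a}\omega_0^{j}$, and the binomial coefficients in the $\sum_{e_1+\cdots+e_b+d=a}$ term arise from counting how the $a$ copies of $\omega_1$ distribute across the $b$ slots between consecutive $\omega_0$'s. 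Matching the $j$-sum $\sum_{j=0}^{b-1}(-1)^j\zeta(\{1\}^{a-1},b+1-j){\rm Li}(\{1\}^j;1-z)$ against the terminal-segment contributions (where a block $\omega_0\omega_1^{a-1}\omega_0^{\,b-1-j}$ evaluated from $0$ to $1$ gives $\zeta(\{1\}^{a-1},b+1-j)$, up to sign from path reversal) is the heart of the verification.

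The main obstacle I expect is the careful handling of the \emph{divergent} iterated integrals (words ending in $\omega_1$, equivalently starting in $\omega_0$ after the substitution) that appear on the intermediate line: one must use the shuffle-regularization of ${\rm Li}$ and keep track of the $\log z$ and $\log(1-z)$ contributions, showing that in the end they cancel or assemble into the ${\rm Li}(\{1\}^{d};\cdot)$ factors (since ${\rm Li}(\{1\}^d;w)=\frac{1}{d!}(-\log(1-w))^d$). Getting all the signs right through the path reversals, and confirming that no extraneous lower-weight ${\rm Li}\cdot{\rm Li}$ cross-terms survive beyond those listed, is the delicate part; once that is done, specializing $z\to 1$ recovers Theorem \ref{akThm8} as promised, and applying $\frac{1}{\Gamma(s)}\int_0^\infty \frac{(\cdot)}{e^t-1}t^{s-1}dt$ with $z=1-e^{-t}$ turns \eqref{cexu2.8} into the sought relation for $\xi$.
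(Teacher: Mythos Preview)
The paper does not prove this statement directly; it is quoted from Xu \cite{cexu} and then recovered as the special case $n=1$ of Theorem~\ref{mainThm2} (after the harmless swap $z\leftrightarrow 1-z$). The paper's proof of Theorem~\ref{mainThm2}, and hence of \eqref{cexu2.8}, is by \emph{induction on the weight via differentiation}: one applies Lemma~\ref{hodai2.3} to $\frac{d}{dz}{\rm Li}({\bf k};1-z)$, invokes the induction hypothesis for the resulting lower-weight ${\rm Li}$, checks that the right-hand side has the same derivative, and then pins down the constant of integration using Lemma~\ref{hodai_lim}. No path-splitting or explicit regularization of divergent iterated integrals is needed; the only limiting argument is the controlled $z\to 1$ asymptotics in Lemma~\ref{hodai_lim}.

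Your route is genuinely different, and as written it has a gap at the decisive step. Splitting $[0,z]$ at $1-z$ is fine, and the initial segments over $[0,1-z]$ are indeed multi-polylogarithms at $1-z$. The problem is the terminal piece $\int_{1-z}^{z}$. The symmetry $t\mapsto 1-t$ does send $[1-z,z]$ to itself while swapping $\omega_0\leftrightarrow\omega_1$, but this only yields an identity between two $\int_{1-z}^{z}$ integrals; it does not by itself express either one in terms of ${\rm Li}(\cdots;z)$, ${\rm Li}(\cdots;1-z)$, or $\zeta$'s. Your sentence ``$\int_{1-z}^{z}=-\int_{z}^{1-z}$'' is also not the path-reversal identity for iterated integrals (the correct statement is $\int_{a}^{b}\eta_1\cdots\eta_n=(-1)^n\int_{b}^{a}\eta_n\cdots\eta_1$), and in any case reversal still leaves you with an integral over the same awkward interval. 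If you try to get rid of $\int_{1-z}^{z}$ by a further path composition through $0$ or $1$, you immediately produce $I_{1-z}$ or $I_{z}$ of words beginning with $\omega_0$ (equivalently, $\int_0^1$ of words ending in $\omega_1$), i.e.\ the divergent integrals you flag at the end --- and you have not proposed a concrete mechanism for showing that the regularized pieces reassemble exactly into the ${\rm Li}(\{1\}^{d};z)\,{\rm Li}(e_1+1,\ldots,e_b+1;1-z)$ products with the stated multiplicity. That reassembly is the whole content of the theorem, so at present the argument is a plausible heuristic rather than a proof. If you want to stay with iterated integrals, a cleaner scheme is to apply $t\mapsto 1-t$ once to turn $I_{z}(\omega_1^{a}\omega_0^{b})$ into an integral over $[1-z,1]$, then use the single path composition $[0,1]=[0,1-z]\cdot[1-z,1]$ together with shuffle-regularization at $1$; alternatively, adopt the paper's differentiation-and-integrate-back method, which sidesteps all of these issues.
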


\begin{teiri}[{\cite[Theorem 3.3]{cexu}}]
For any positive integers $a,m,r$ and $k_1,\ldots,k_r\geq2$, we have
\begin{align}
&\xi(\{1\}^{a-1},k_1,\ldots,k_{r-1},k_r-1;m+1)-(-1)^{k_1+\cdots+k_r}\xi(\{1\}^{m-1},k_r,\ldots,k_2,k_1-1;a+1) \nonumber\\
&=\sum_{j=0}^{r-1}(-1)^{k_{j+2}+\cdots+k_r} \nonumber\\
&\quad\times\sum_{i=1}^{k_{j+1}-2}(-1)^{i-1}\zeta(\{1\}^{m-1},k_r,\ldots,r_{j+2},i+1)\zeta(\{1\}^{a-1},k_1,\ldots,k_j,k_{j+1}-i) \nonumber\\
&\quad +\sum_{j=0}^{r-2} (-1)^{k_j+\cdots+k_r} \left\{ \zeta(\{1\}^{a-1},k_1,\ldots,k_{j+1})\xi(\{1\}^{m-1},k_r,\ldots,k_{j+2}-1;2) \right. \nonumber\\
\label{cexuThm3.3}
&\qquad \left. -\zeta(\{1\}^{a-1},k_1,\ldots,k_j)\xi(\{1\}^{m-1},k_r,\ldots,k_{j+1}-1;2) \right\}.
\end{align}
\end{teiri}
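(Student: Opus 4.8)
The plan is to transplant a functional equation for multi-polylogarithms under $z\mapsto 1-z$ through the integral representation \eqref{intrep_ak}, and then to compare the outcome with the analogous expansion of the ``reversed'' Arakawa-Kaneko value $\xi(\{1\}^{m-1},k_r,\ldots,k_1-1;a+1)$.

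Write ${\bf k}=(\{1\}^{a-1},k_1,\ldots,k_{r-1},k_r-1)$ and $k=k_1+\cdots+k_r$. First I would record, from \eqref{intrep_ak} with $s=m+1$ and the substitution $u=1-e^{-t}$ (so $e^t-1=u/(1-u)$, $t=-\log(1-u)$ and $dt/(e^t-1)=du/u$), the identity $\xi({\bf k};m+1)=\int_0^1{\rm Li}({\bf k};u)\,{\rm Li}(\{1\}^m;u)\,du/u$; then, applying $u\mapsto 1-u$ together with ${\rm Li}(\{1\}^m;1-u)=(-\log u)^m/m!$,
$$
\xi({\bf k};m+1)=\frac{1}{m!}\int_0^1{\rm Li}\bigl(\{1\}^{a-1},k_1,\ldots,k_r-1;1-u\bigr)\,(-\log u)^m\,\frac{du}{1-u}.
$$
The same computation with $a$ and $m$ exchanged applies to $\xi(\{1\}^{m-1},k_r,\ldots,k_2,k_1-1;a+1)$.

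The key lemma I would then need is an \emph{explicit} functional equation expressing ${\rm Li}(\{1\}^{a-1},k_1,\ldots,k_{r-1},k_r-1;1-u)$ as a $\mathbb{Q}$-linear combination of products ${\rm Li}(\{1\}^d;1-u){\rm Li}({\bf k}';u)$ with multiple zeta values as coefficients --- this is the shape of Theorem \ref{ktLem3.5}, now with the coefficients made explicit. For $r=1$ it is \eqref{cexu2.8} (and for $a=r=1$ it is \eqref{ak_dep1}); for general $r$ it is exactly the kind of identity the functional equations developed in this paper (via the 2-poset formalism) provide, and I would establish it first, by induction on $r$ or directly from the iterated-integral calculus. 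Feeding it into the displayed integral, integrating term by term against $(-\log u)^m\,du/(m!(1-u))$ and substituting $u=e^{-t}$, a term with factor ${\rm Li}(\{1\}^d;1-u){\rm Li}({\bf k}';u)$ contributes $\binom{d+m}{m}\zeta({\bf k}';d+m+1)$ by \eqref{intrep_ez}; the interchange of summation and integration is routine, and when $k_r=2$ (so that ${\bf k}$, and possibly some ${\bf k}'$, is not admissible) one need only control the logarithmic growth near $u\in\{0,1\}$. This step recovers Theorem \ref{ktThm3.6} at $s=m+1$ with explicit coefficients, and the twin computation yields the explicit Euler-Zagier expansion of $\xi(\{1\}^{m-1},k_r,\ldots,k_1-1;a+1)$.

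The core of the argument is to show that the difference $\xi(\{1\}^{a-1},k_1,\ldots,k_r-1;m+1)-(-1)^k\,\xi(\{1\}^{m-1},k_r,\ldots,k_1-1;a+1)$ of these two explicit sums of multiple zeta functions collapses to the right-hand side of \eqref{cexuThm3.3}. For $r=1$ this difference is $\xi(\{1\}^{a-1},k_1-1;m+1)-(-1)^{k_1}\xi(\{1\}^{m-1},k_1-1;a+1)$, which is precisely the identity \eqref{akDual} with $b=k_1-2$ (the right-hand sides agreeing after the re-indexing $i=j+1$), so the base case is already available. For general $r$, the symmetry of the functional equation under the reversal $(\{1\}^{a-1},k_1,\ldots,k_r-1)\leftrightarrow(\{1\}^{m-1},k_r,\ldots,k_1-1)$ forces almost all of the Euler-Zagier terms $\zeta({\bf k}';d+m+1)$ and $\zeta({\bf k}'';d'+a+1)$ produced by the two expansions to cancel in conjugate pairs; the contributions that survive are the $\zeta\cdot\zeta$ products in the first two lines of \eqref{cexuThm3.3}, together with partial expansions that re-assemble --- by \eqref{intrep_ez} read in reverse --- into the products $\zeta(\cdots)\,\xi(\cdots;2)$ of the last two lines. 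I would make this bookkeeping precise by an induction on $r$ that peels off the block $k_1$ using the functional equation above.

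I expect the main obstacle to be twofold. First, the explicit functional equation for the general index $(\{1\}^{a-1},k_1,\ldots,k_{r-1},k_r-1)$ is itself the substantive step: getting its coefficients into a form amenable to the transplantation above is precisely what the 2-poset machinery is needed for. Second, even with it in hand, one must show that the (numerous) Euler-Zagier terms on the two sides cancel in pairs except for the stated remainder; this requires a careful splitting of the binomial coefficients $\binom{d+m}{m}$ and attention to the boundary terms $\zeta(\{1\}^{a-1},k_1,\ldots,k_j)$ with $j$ small, where shuffle-type regularization must be invoked. The non-admissibility of ${\bf k}$ when $k_r=2$ is a further, though minor, technical point: the integrals remain convergent, but the individual summands of the functional equation can carry logarithmic singularities that have to be tracked once the sum is split.
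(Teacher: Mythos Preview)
This statement is quoted from \cite{cexu} and is not given an independent proof in the paper; rather, the paper establishes the more general Corollary~\ref{mainCor}, of which \eqref{cexuThm3.3} is the special case $a_2=\cdots=a_n=1$. Your plan --- prove an explicit functional equation for ${\rm Li}({\bf k};1-z)$, integrate it against $t^{s-1}/(e^t-1)$, set $s=m+1$, and then match the two expansions --- is precisely the route the paper takes in its first proof of Corollary~\ref{mainCor}: Theorem~\ref{mainThm2} supplies the functional equation (proved by induction on the weight via Lemma~\ref{hodai2.3} and the limit Lemma~\ref{hodai_lim}), Theorem~\ref{mainThm1} is obtained from it by integration, and substituting $s=m+1$ together with Lemma~\ref{kt_fcn} collapses the Euler--Zagier sums into the $\xi$-values on the right. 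So your approach is sound and essentially coincides with the paper's first method.

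The paper also gives a second, combinatorial proof of Corollary~\ref{mainCor} (\S4.2) that bypasses the functional equation entirely. Starting from the 2-poset integral \eqref{xi_poset} for $\xi({\bf k};m+1)$, one repeatedly applies Lemma~\ref{idou} --- a shuffle-type identity that slides a run of $\circ$'s or $\bullet$'s from one branch of the poset to the other --- to transport the blocks of ${\bf k}$ across to the $m$-branch. The terms spun off at each step are exactly the $\zeta\cdot\zeta$ and $\xi\cdot\xi$ products on the right-hand side, and the residual poset at the end is the reversed $\xi$. This handles structurally the cancellation you flag as the main bookkeeping obstacle, and in the special case $a_2=\cdots=a_n=1$ it reproduces \eqref{cexuThm3.3} without any induction on $r$; it is worth keeping in mind if the term-matching in your approach becomes unwieldy.
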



In this paper, We give explicit formulas to Theorem \ref{ktLem3.5}, \ref{ktThm3.6} for any index. 
To state main theorems, we introduce some notation. 
For an index ${\bf k}=(k_1,k_2,\ldots, k_r)$ and a non-positive index ${\bf e}:=(e_1,\ldots,e_r)$, 
we put 
$${\bf k}_-:=
\begin{cases}
(k_1,\ldots,k_{r-1},k_r-1) & \mbox{if } k_r\geq2\\
(k_1,\ldots,k_{r-1}) & \mbox{if }k_r=1
\end{cases}
$$
 and ${\bf e}_+:=(e_1,\ldots,e_{r-1},e_r+1)$.  
We denote by ${\bf k}+{\bf e}$ the index obtained by the component-wise addition 
$$
{\bf k}+{\bf e} := (k_1+e_1,\ldots,k_r+e_r),
$$
and by $b({\bf k};{\bf e})$ the product of binomial coefficients
$$
b ( {\bf k};{\bf e} ) := \prod_{i=1}^{r} \binom{k_i+e_i-1}{e_i}. 
$$
For any index ${\bf k}=(k_1,k_2,\ldots,k_r)$, we define Hoffman's dual of ${\bf k}$ by
$${\bf k}^\vee=(\underbrace{1,\ldots,1}_{k_1}+\underbrace{1,\ldots,1}_{k_2}+\underbrace{1,\ldots,1}_{k_r}).$$
For an index ${\bf k}=(\{1\}^{a_{1}-1},b_{1}+1,\ldots,\{1\}^{a_n-1},b_n+1)$, we put
$${\bf k}^i_j=(\{1\}^{a_{i+1}-1},b_{i+1}+1,\ldots,\{1\}^{a_j-1},b_j+1)$$
and
$$\overleftarrow{\bf k}=(b_n+1,\{1\}^{a_n-1},\ldots,b_1+1,\{1\}^{a_1-1})$$
for non-negative integers $i$, $j$ with  $0\leq i<j \leq n$.  
With our convention, we have ${\bf k}^i_n=:{\bf k}^i$, ${\bf k}^0_j=:{\bf k}_j$ and ${\bf k}_0={\bf k}^n=\varnothing$. 
Hoffman's dual ${\bf k}^\vee$ can be written in terms of dual of index as ${\bf k}^\vee=\overleftarrow{(({\bf k}_+)^\dagger)_-}$, but  it is introduced to avoid complication. 

Our main theorems give more precise answers to Problem \ref{akConj} as follows.

\begin{teiri}\label{mainThm1}
Let ${\bf k}=(\{1\}^{a_1-1},b_1+1,\ldots,\{1\}^{a_n-1},b_n+1)$ with $b_n \geq 0$ be an index. 
Then, we have
\begin{align*}
&\xi({\bf k};s) \\
&=(1-\delta_{0,b_n})\zeta({\bf k}) \zeta(s) 
   -\sum_{l=1}^{n}  \sum_{j=0}^{b_l-2} (-1)^{j+{\rm wt}({\bf k}^l)} 
\zeta({\bf k}_{l-1},\{1\}^{a_l-1},b_l-j)\zeta((j+1,{\bf k}^l)^\vee;s) \\
&\quad +\sum_{l=1}^{n} (-1)^{b_l+{\rm wt}({\bf k}^l)} 
 \sum_{d= 0}^{a_l} 
 \sum_{\substack{{\rm wt}({\bf e}_1)+{\rm wt}({\bf e}_2)+d=a_l \\ {\rm dep}({\bf e}_1)=n_1,\ {\rm dep}({\bf e}_2)=n_2}}
  (-1)^{{\rm wt}({\bf e}_1)} b(({\bf k}_{l-1})^\dagger;{\bf e}_1) \\
&\quad\quad\times   
\zeta(({\bf k}_{l-1})^\dagger+{\bf e}_1) b((b_l,{\bf k}^l)^\vee;{\bf e}_2)\binom{s+d-1}{d}\zeta((b_l,{\bf k}^l)^\vee+{\bf e}_2;s+d),   
\end{align*}
where the sum on the right runs over all non-negative indices ${\bf e}_1$, ${\bf e}_2$ that satisfy ${\rm wt}({\bf e}_1)+{\rm wt}({\bf e}_2)+d=a_l$, $n_1={\rm dep}(({\bf k}_{l-1})^\dagger)$ and $n_2={\rm dep}((b_l,{\bf k}^l)^\vee)$.  
\end{teiri}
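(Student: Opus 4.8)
The plan is to deduce Theorem~\ref{mainThm1} from an explicit functional equation for the multi-polylogarithm under $z\mapsto 1-z$, the two being linked by the integral transform $G(z)\mapsto \frac{1}{\Gamma(s)}\int_{0}^{\infty}\frac{G(e^{-t})}{e^{t}-1}\,t^{s-1}\,dt$. By Theorem~\ref{ktLem3.5} the functional equation already has the shape $\mathrm{Li}(\mathbf{k};1-z)=\sum c_{\mathbf{k}}(\mathbf{k}';d)\,\mathrm{Li}(\{1\}^{d};1-z)\,\mathrm{Li}(\mathbf{k}';z)$, so the task is to compute the coefficients $c_{\mathbf{k}}(\mathbf{k}';d)$ explicitly; concretely I will show that $\mathrm{Li}(\mathbf{k};1-z)$ equals the expression obtained from the right-hand side of Theorem~\ref{mainThm1} by replacing, in every term, the factor $\binom{s+d-1}{d}\,\zeta(\mathbf{k}';s+d)$ by $\mathrm{Li}(\{1\}^{d};1-z)\,\mathrm{Li}(\mathbf{k}';z)$ (so that $(1-\delta_{0,b_n})\zeta(\mathbf{k})\zeta(s)$ becomes $(1-\delta_{0,b_n})\zeta(\mathbf{k})$), an identity valid near $z=0$ and hence everywhere. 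Granting it, Theorem~\ref{mainThm1} follows at once: applying the transform, the constant term gives $(1-\delta_{0,b_n})\zeta(\mathbf{k})\zeta(s)$ by the integral representation of $\zeta(s)$, while for each remaining term one substitutes $z=e^{-t}$, uses $\mathrm{Li}(\{1\}^{d};1-e^{-t})=t^{d}/d!$, the representation~\eqref{intrep_ez}, and $\frac{1}{d!}\frac{\Gamma(s+d)}{\Gamma(s)}=\binom{s+d-1}{d}$. This is exactly the step from Theorem~\ref{ktLem3.5} to Theorem~\ref{ktThm3.6}, so it carries no real difficulty; the whole content is the explicit functional equation.

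To compute the coefficients I would use the iterated-integral representation and Yamamoto's calculus of $2$-posets. With $\omega_{0}=dt/t$, $\omega_{1}=dt/(1-t)$ and $\mathbf{k}=(\{1\}^{a_{1}-1},b_{1}+1,\ldots,\{1\}^{a_{n}-1},b_{n}+1)$, the substitution $t\mapsto 1-t$ turns $\mathrm{Li}(\mathbf{k};1-z)$, written as an iterated integral over $[0,1-z]$, into an iterated integral along a path from $1$ to $z$ whose word $V$ is obtained from the word of $\mathbf{k}$ by reversing it and interchanging $\omega_{0}\leftrightarrow\omega_{1}$; block by block this is $V=(\omega_{0}^{a_{1}}\omega_{1}^{b_{1}})(\omega_{0}^{a_{2}}\omega_{1}^{b_{2}})\cdots(\omega_{0}^{a_{n}}\omega_{1}^{b_{n}})$. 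One then decomposes the path from $1$ to $z$ through $0$ (cutting at $\epsilon\to 0^{+}$): the path-composition rule expresses the integral as a sum, over all factorisations $V=V'V''$, of an integral along $[0,1]$ (a shuffle-regularised multiple zeta value) times one along $[0,z]$ (a shuffle-regularised multiple polylogarithm). The cut lies inside the $l$-th block for a unique $l$—the outer summation index—with $V'$ made of the blocks $1,\ldots,l-1$ and a head of block $l$, and $V''$ of a tail of block $l$ and the blocks $l+1,\ldots,n$. Reversing $V'$ and reading it off yields, because the duality $\dagger$ is exactly reverse-and-swap on words, the value $\zeta((\mathbf{k}_{l-1})^{\dagger}+\cdots)$ with the sign $(-1)^{\mathrm{wt}(\cdot)}$; reading off $V''$ yields $\mathrm{Li}((b_{l},\mathbf{k}^{l})^{\vee}+\cdots;z)$, the Hoffman dual appearing because the interior runs of $\omega_{0}$ and $\omega_{1}$ regroup exactly as in the identity $\mathbf{k}^{\vee}=\overleftarrow{((\mathbf{k}_{+})^{\dagger})_{-}}$ recalled above. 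The $\omega_{0}$'s of block $l$ that straddle the cut account for everything else: writing $a_{l}=d+\mathrm{wt}(\mathbf{e}_{1})+\mathrm{wt}(\mathbf{e}_{2})$, the $d$ of them that survive the limit on the $z$-side reassemble—again via $t\mapsto 1-t$, using $\int_{\epsilon}^{z}\omega_{0}^{\,d}=\frac{1}{d!}(\log z-\log\epsilon)^{d}$—into $\mathrm{Li}(\{1\}^{d};1-z)=\frac{(-\log z)^{d}}{d!}$, while the remaining ones get redistributed by the shuffle among the $\omega_{0}$-runs of $V'$ and $V''$, producing the indices $(\mathbf{k}_{l-1})^{\dagger}+\mathbf{e}_{1}$ and $(b_{l},\mathbf{k}^{l})^{\vee}+\mathbf{e}_{2}$ with weights $b((\mathbf{k}_{l-1})^{\dagger};\mathbf{e}_{1})$ and $b((b_{l},\mathbf{k}^{l})^{\vee};\mathbf{e}_{2})$—these binomials being nothing but the multiplicities with which a word occurs in the shuffle product of a power $\omega_{0}^{\,p}$ with a given word, since $\binom{m_{i}+e_{i}-1}{e_{i}}$ counts the interleavings of $e_{i}$ indistinguishable $\omega_{0}$'s into a run of length $m_{i}-1$. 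The middle sum comes from the cuts landing strictly inside the run $\omega_{1}^{b_{l}}$, where the $[0,1]$-factor is a regularised value whose finite part is the convergent $\zeta(\mathbf{k}_{l-1},\{1\}^{a_{l}-1},b_{l}-j)$, paired with $\mathrm{Li}((j+1,\mathbf{k}^{l})^{\vee};z)$ and the sign $(-1)^{j}$ of the regularisation; the $\log\epsilon$-divergences cancel in the total sum, their non-cancellation when $b_{n}=0$ being exactly absorbed by the factor $1-\delta_{0,b_{n}}$.

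The analytic side of this is routine; the real obstacle is the combinatorial bookkeeping. The delicate points are: running the sum over cuts uniformly in the block index $l$; verifying that reverse-and-swap carries the left half of $V$ precisely onto $(\mathbf{k}_{l-1})^{\dagger}$ and the right half onto $(b_{l},\mathbf{k}^{l})^{\vee}$—and not onto a slightly different pair—which is where the two distinct dualities $\dagger$ and $\vee$ come from; and, hardest of all, extracting the binomial weights together with the three-fold constraint $d+\mathrm{wt}(\mathbf{e}_{1})+\mathrm{wt}(\mathbf{e}_{2})=a_{l}$ and the correct signs. I expect the cleanest route to the binomials to be a preliminary lemma expanding $\int_{\epsilon}^{z}\omega_{0}^{\,p}\,w$, for $w$ the word of an arbitrary index $\mathbf{n}$, as a polynomial in $\log\epsilon$ whose coefficients are explicit $\mathbb{Q}$-linear combinations of $\frac{(\log z)^{d}}{d!}\,b(\mathbf{n};\mathbf{e})\,\mathrm{Li}(\mathbf{n}+\mathbf{e};z)$ (provable by induction on $p$ from $\frac{d}{dz}$ and the shuffle product), together with its analogue at the endpoint $1$; substituting these into the path-decomposed sum and collecting by $l$ then yields the coefficients in the stated form. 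With the framework in place, the degenerate cases ($b_{n}=0$; $a_{l}=1$; $\mathbf{k}_{l-1}=\varnothing$ or $\mathbf{k}^{l}=\varnothing$) and the signs are mechanical, and specialising $n=1$ must recover Arakawa--Kaneko's depth-one equation~\eqref{ak_dep1} and Xu's formula~\eqref{cexu2.8}, a useful consistency check.
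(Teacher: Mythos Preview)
Your reduction of Theorem~\ref{mainThm1} to an explicit functional equation for $\mathrm{Li}(\mathbf{k};1-z)$ via the integral transform is exactly what the paper does: its proof of Theorem~\ref{mainThm1} occupies only a few lines, substituting $z=e^{-t}$ into Theorem~\ref{mainThm2} (the functional equation you describe), using $\mathrm{Li}(\{1\}^{d};1-e^{-t})=t^{d}/d!$ and the representation~\eqref{intrep_ez}. So on that step you and the paper agree completely.

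Where you diverge is in how the functional equation itself is established. The paper proves Theorem~\ref{mainThm2} by induction on $\mathrm{wt}(\mathbf{k})$: one differentiates $\mathrm{Li}(\mathbf{k};1-z)$ via Lemma~\ref{hodai2.3}, applies the inductive hypothesis to $\mathbf{k}_{-}$, regroups the resulting sums with Pascal's rule on the products $b(\cdot;\cdot)$ so as to recognise a total $z$-derivative, integrates back, and then fixes the constant of integration by letting $z\to 0$ and invoking the limit Lemma~\ref{hodai_lim} (itself proved with a short $2$-poset computation and the formula of Lemma~\ref{kt_fcn}). Your route---the substitution $t\mapsto 1-t$ followed by path decomposition of the iterated integral at $\epsilon\to 0^{+}$ with shuffle regularisation---is genuinely different. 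It is more structural: the two dualities $\dagger$ and $\vee$, the binomial weights $b(\cdot;\cdot)$, and the constraint $d+\mathrm{wt}(\mathbf{e}_{1})+\mathrm{wt}(\mathbf{e}_{2})=a_{l}$ are forced by the word combinatorics rather than discovered inductively, which explains the shape of the answer. The price is that you must handle the $\log\epsilon$ cancellations carefully and prove the auxiliary shuffle lemma you sketch; the paper's induction avoids all regularisation beyond the single controlled limit in Lemma~\ref{hodai_lim}, at the cost of a computation whose outcome is harder to anticipate. One small slip to fix: the displayed $V=\omega_{0}^{a_{1}}\omega_{1}^{b_{1}}\cdots\omega_{0}^{a_{n}}\omega_{1}^{b_{n}}$ is obtained from the word of $\mathbf{k}$ by interchanging $\omega_{0}\leftrightarrow\omega_{1}$ only, not by ``reversing and interchanging''---the reversal is already absorbed in running the path from $1$ to $z$.
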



\begin{teiri}\label{mainThm2}
Let ${\bf k}=(\{1\}^{a_1-1},b_1+1,\ldots,\{1\}^{a_n-1},b_n+1)$ with $b_n \geq 0$ be an index. 
Then, we have
\begin{align*}
& {\rm Li}({\bf k};1-z) \\
%
&=(1-\delta_{0,b_n})\zeta({\bf k})
 -\sum_{l=1}^{n}  \sum_{j=0}^{b_l-2} (-1)^{j+{\rm wt}({\bf k}^l)} 
\zeta({\bf k}_{l-1},\{1\}^{a_l-1},b_l-j){\rm Li}((j+1,{\bf k}^l)^\vee;z) \\
&\quad +\sum_{l=1}^{n} (-1)^{b_l+{\rm wt}({\bf k}^l)} 
 \sum_{d= 0}^{a_l} 
 \sum_{\substack{{\rm wt}({\bf e}_1)+{\rm wt}({\bf e}_2)+d=a_l \\ {\rm dep}({\bf e}_1)=n_1,\ {\rm dep}({\bf e}_2)=n_2}}
  (-1)^{{\rm wt}({\bf e}_1)} b(({\bf k}_{l-1})^\dagger;{\bf e}_1) \\
&\quad\quad\times   
\zeta(({\bf k}_{l-1})^\dagger+{\bf e}_1) b((b_l,{\bf k}^l)^\vee;{\bf e}_2){\rm Li}(\{1\}^{d};1-z)
  {\rm Li}((b_l,{\bf k}^l)^\vee+{\bf e}_2;z). 
\end{align*}
where the sum on the right runs over all non-negative indices ${\bf e}_1$, ${\bf e}_2$ that satisfy ${\rm wt}({\bf e}_1)+{\rm wt}({\bf e}_2)+d=a_l$, $n_1={\rm dep}(({\bf k}_{l-1})^\dagger)$ and $n_2={\rm dep}((b_l,{\bf k}^l)^\vee)$.  
\end{teiri}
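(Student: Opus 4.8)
The plan is to prove Theorem~\ref{mainThm2} directly by an iterated–integral (equivalently, a $2$–poset) computation; Theorem~\ref{mainThm1} will then follow automatically, because applying the transform $g\mapsto\frac{1}{\Gamma(s)}\int_{0}^{\infty}g(1-e^{-t})\,\frac{t^{s-1}}{e^{t}-1}\,dt$ to the identity of Theorem~\ref{mainThm2} and using \eqref{intrep_ak}, \eqref{intrep_ez} together with ${\rm Li}(\{1\}^{d};1-e^{-t})=t^{d}/d!$ turns every ${\rm Li}(\{1\}^{d};1-e^{-t}){\rm Li}({\bf k}';e^{-t})$ into $\binom{s+d-1}{d}\zeta({\bf k}';s+d)$ and every ${\rm Li}({\bf k}'';e^{-t})$ into $\zeta({\bf k}'';s)$. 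For Theorem~\ref{mainThm2} I would set $\omega_{0}=dt/t$, $\omega_{1}=dt/(1-t)$, and for a word $w=\eta_{1}\cdots\eta_{N}$ in $\omega_{0},\omega_{1}$ write $\int_{u}^{v}w=\int_{u<t_{1}<\cdots<t_{N}<v}\eta_{1}(t_{1})\cdots\eta_{N}(t_{N})$, shuffle–regularized (tangential base points at $0$ and $1$) when $w$ begins with $\omega_{0}$ or ends with $\omega_{1}$; for an index ${\bf j}=(j_{1},\dots,j_{r})$ put $W_{\bf j}=\omega_{1}\omega_{0}^{j_{1}-1}\cdots\omega_{1}\omega_{0}^{j_{r}-1}$, so ${\rm Li}({\bf j};w)=\int_{0}^{w}W_{\bf j}$ and, for our ${\bf k}$, $W_{\bf k}=\omega_{1}^{a_{1}}\omega_{0}^{b_{1}}\cdots\omega_{1}^{a_{n}}\omega_{0}^{b_{n}}$. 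The substitution $t\mapsto 1-t$ gives ${\rm Li}({\bf k};1-z)=\int_{z}^{1}\tau(W_{\bf k})$, where $\tau$ reverses a word and interchanges $\omega_{0}\leftrightarrow\omega_{1}$ (the transposition of the associated $2$–poset). Then path–reversal and the regularized path–composition formula at the tangential base point $0$ give, with $S:=\omega_{0}^{a_{1}}\omega_{1}^{b_{1}}\cdots\omega_{0}^{a_{n}}\omega_{1}^{b_{n}}$,
\[
{\rm Li}({\bf k};1-z)=\sum_{S=AB}(-1)^{|B|}\Bigl(\int_{0}^{1}A^{\mathrm{rev}}\Bigr)\Bigl(\int_{0}^{z}B\Bigr),
\]
the sum over all decompositions of $S$ into a prefix $A$ and a suffix $B$; the rest of the proof evaluates this sum, organized by which maximal run of $S$ contains the cut between $A$ and $B$.

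The easy contributions come first. The extreme cut $B=\varnothing$ contributes $\int_{0}^{1}S^{\mathrm{rev}}=\int_{0}^{1}\tau(W_{\bf k})=\int_{0}^{1}W_{\bf k}={\rm Li}({\bf k};1)$, which is $\zeta({\bf k})$ when $b_{n}\ge 1$ and a divergent piece that cancels against others when $b_{n}=0$: this is the term $(1-\delta_{0,b_{n}})\zeta({\bf k})$. For a cut strictly inside a run $\omega_{1}^{b_{l}}$, writing $A=(\text{prefix of }S\text{ up through }\omega_{0}^{a_{l}})\,\omega_{1}^{\,b_{l}-1-j}$ with $0\le j\le b_{l}-2$, neither factor needs regularizing: reading off indices from words gives $\int_{0}^{z}B={\rm Li}((j+1,{\bf k}^{l})^{\vee};z)$, while $A^{\mathrm{rev}}$ is the word of an index whose $\zeta$–value equals $\zeta({\bf k}_{l-1},\{1\}^{a_{l}-1},b_{l}-j)$ by the duality $\zeta({\bf j})=\zeta({\bf j}^{\dagger})$ recalled in the Introduction; since here $|B|=j+1+{\rm wt}({\bf k}^{l})$, summing over $l$ and $j$ reproduces exactly the middle sum of Theorem~\ref{mainThm2}.

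The substantive part is the cuts inside a run $\omega_{0}^{a_{l}}$. For $0\le p\le a_{l}$ one has $A^{\mathrm{rev}}=\omega_{0}^{p}\,W_{({\bf k}_{l-1})^{\dagger}}$ and $B=\omega_{0}^{\,a_{l}-p}\,W_{(b_{l},{\bf k}^{l})^{\vee}}$; the identification of the two reduced words with $({\bf k}_{l-1})^{\dagger}$ and $(b_{l},{\bf k}^{l})^{\vee}$ is a short check on block structures, and it is here that the ordinary dual $\dagger$ enters on the $\int_{0}^{1}$–side (through word reversal, i.e.\ $\zeta$–duality) and Hoffman's dual $\vee$ enters on the $\int_{0}^{z}$–side (through reading an index off a word). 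Both factors may be divergent, so I would sum over $p$ and regularize at the common base point $0$; the finite value is controlled by the two evaluations
\[
\mathrm{reg}\!\int_{0}^{1}\omega_{0}^{p}\,W_{\bf m}=(-1)^{p}\!\!\sum_{{\rm wt}({\bf e}_{1})=p}\!\! b({\bf m};{\bf e}_{1})\,\zeta({\bf m}+{\bf e}_{1}),\qquad
\mathrm{reg}\!\int_{0}^{z}\omega_{0}^{q}\,W_{\bf n}=(-1)^{q}\!\!\sum_{d+{\rm wt}({\bf e}_{2})=q}\!\! b({\bf n};{\bf e}_{2})\,{\rm Li}(\{1\}^{d};1-z)\,{\rm Li}({\bf n}+{\bf e}_{2};z),
\]
used with ${\bf m}=({\bf k}_{l-1})^{\dagger}$, ${\bf n}=(b_{l},{\bf k}^{l})^{\vee}$, $q=a_{l}-p$. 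Inserting these, putting $p={\rm wt}({\bf e}_{1})$ and using $|B|=(a_{l}-p)+b_{l}+{\rm wt}({\bf k}^{l})$, the signs collapse to $(-1)^{b_{l}+{\rm wt}({\bf k}^{l})}(-1)^{{\rm wt}({\bf e}_{1})}$, the constraint becomes $d+{\rm wt}({\bf e}_{1})+{\rm wt}({\bf e}_{2})=a_{l}$, and what survives is precisely the last sum of Theorem~\ref{mainThm2}.

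The hard part will be establishing the two regularization evaluations displayed above (the first is the $z\to 1$ limit of the second). Their combinatorial core is the elementary fact that the coefficient of $W_{{\bf j}+{\bf e}}$ in the shuffle product $\omega_{0}^{\,{\rm wt}({\bf e})}\,\sh\, W_{\bf j}$ is $b({\bf j};{\bf e})$ --- obtained by choosing, block by block, which of the $\omega_{0}$'s are the inserted ones. From this one extracts the stated closed forms by induction on $q$, using $\omega_{0}^{q}W_{\bf n}=\omega_{0}(\omega_{0}^{q-1}W_{\bf n})$ together with the shuffle relation $\int_{0}^{z}(u\,\sh\, v)=(\int_{0}^{z}u)(\int_{0}^{z}v)$, so that powers of $\int_{0}^{z}\omega_{0}=\log z=-{\rm Li}(\{1\}^{1};1-z)$ assemble the ${\rm Li}(\{1\}^{d};1-z)$–factors, the signs $(-1)^{p}$, $(-1)^{q}$ being forced along the way; one must also note that the regularization constants cancel in the sum over $p$ so that the composition formula holds with finite values. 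This whole argument is most cleanly carried out in the $2$–poset formalism, where the passage from ${\rm Li}({\bf k};1-z)$ to the right-hand side is the resolution of a single non–totally–ordered $2$–poset integral into totally ordered ones, and the binomial weights $b(\,\cdot\,;\,\cdot\,)$ appear automatically from the merging of chains. The degenerate cases need only a direct check: when $b_{n}=0$ one reads $\zeta({\bf k})$ as $0$ and $(b_{n},{\bf k}^{n})^{\vee}$ as $\varnothing$, and for $l=1$ one has $({\bf k}_{0})^{\dagger}=\varnothing$, which forces ${\bf e}_{1}=\varnothing$ and $p=0$.
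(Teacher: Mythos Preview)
Your approach is correct but genuinely different from the paper's. The paper proves Theorem~\ref{mainThm2} by induction on ${\rm wt}({\bf k})$: one differentiates ${\rm Li}({\bf k};1-z)$ using Lemma~\ref{hodai2.3}, applies the induction hypothesis to ${\rm Li}({\bf k}_-;1-z)$, integrates back, and determines the constant of integration---in the admissible case by evaluating at $z=0$ (the argument of \cite{kt_fcn}), and in the non-admissible case $b_n=0$ by letting $z\to 0$ and invoking Lemma~\ref{hodai_lim}, itself proved via $2$-poset manipulations. By contrast, you go directly through the regularized path-composition (coproduct) identity $\int_{z}^{1}S^{\mathrm{rev}}=\sum_{S=AB}(-1)^{|B|}\bigl(\mathrm{reg}\int_{0}^{1}A^{\mathrm{rev}}\bigr)\bigl(\mathrm{reg}\int_{0}^{z}B\bigr)$ and evaluate each factor by the two shuffle-regularization formulas you display; these follow from the inversion of $\omega_0^{q}\,\sh\,W_{\bf n}=\sum_{q'+{\rm wt}({\bf e})=q}b({\bf n};{\bf e})\,\omega_0^{q'}W_{{\bf n}+{\bf e}}$, exactly as you indicate. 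Your route avoids induction and makes transparent \emph{why} the ordinary dual $\dagger$ appears on the $\int_0^1$-side and Hoffman's dual $\vee$ on the $\int_0^z$-side, and why the weights $b(\cdot;\cdot)$ arise. The paper's route is more elementary---only differentiation, integration, and one $2$-poset lemma---and does not need to invoke the compatibility of tangential-base-point regularizations with path composition, which you use as a black box. One bookkeeping point worth tightening in your write-up: when $b_n=0$ the cut $B=\varnothing$ coincides with the case $l=n$, $p=a_n$ of your ``$\omega_0^{a_l}$-run'' analysis, so it should be counted there rather than separately; your remark about $(1-\delta_{0,b_n})\zeta({\bf k})$ and the degenerate reading $(b_n,{\bf k}^n)^{\vee}=\varnothing$ is the right fix, but make the non-overlap of the three cases explicit.
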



Theorem \ref{mainThm1} implies the following corollary, which gives a duality-like formula predicted in Problem \ref{akConj} (iii) for special values of Arakawa-Kaneko zeta functions.  
\begin{kei}\label{mainCor}
For any index ${\bf k}=(\{1\}^{a_1-1},b_1+1,\ldots,\{1\}^{a_n-1},b_n+1)$ and positive integer $m$, we have
\begin{align*}
&\xi({\bf k};m+1) - (-1)^{{\rm wt}({\bf k})-a_1} \xi (\{1\}^{m-1},\overleftarrow{(b_1,{\bf k}^1)_+};a_1+1) \\
&=(1-\delta_{0,b_n})\zeta({\bf k})  \zeta(m+1) \\
&\quad -\sum_{l=1}^{n} \sum_{j=0}^{b_l-2} (-1)^{j+{\rm wt}({\bf k}^l)}
\zeta({\bf k}_{l-1},\{1\}^{a_l-1},b_l-j)  \zeta(\{1\}^{m-1},\overleftarrow{(j+2, {\bf k}^l)_+}) \\
&\quad +\sum_{l=2}^{n} \sum_{d=0}^{a_l}  (-1)^{b_l+{\rm wt}({\bf k}^l)+d} 
\xi(({\bf k}_{l-1})_-;d+1) \xi(\{1\}^{m-1},\overleftarrow{(b_l,{\bf k}^l)_+};a_l-d+1). 
\end{align*}
\end{kei}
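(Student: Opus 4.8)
The plan is to deduce the corollary from Theorem~\ref{mainThm1} by setting $s=m+1$ and then rewriting the multiple zeta functions of Euler--Zagier type at the integer point $m+1$ as honest multiple zeta values, on which the duality formula can be brought to bear. Putting $s=m+1$ in Theorem~\ref{mainThm1}, the first term is already $(1-\delta_{0,b_n})\zeta({\bf k})\zeta(m+1)$, and for the second term one uses that $\zeta({\bf j};m+1)=\zeta(j_1,\ldots,j_p,m+1)$ is a multiple zeta value together with the duality formula and the identity ${\bf k}^\vee=\overleftarrow{(({\bf k}_+)^\dagger)_-}$ recalled in the introduction, which yield
\[
\zeta\bigl((j+1,{\bf k}^l)^\vee;m+1\bigr)=\zeta\bigl(\{1\}^{m-1},\overleftarrow{(j+2,{\bf k}^l)_+}\bigr).
\]
Thus the second term of Theorem~\ref{mainThm1} at $s=m+1$ is exactly the second term of the corollary, and the whole problem is concentrated in the third term.

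Write $T$ for the triple sum coming from the third term of Theorem~\ref{mainThm1} at $s=m+1$, so that $\xi({\bf k};m+1)$ equals the first term plus the second term plus $T$. It remains to prove
\[
T=(-1)^{{\rm wt}({\bf k})-a_1}\,\xi\bigl(\{1\}^{m-1},\overleftarrow{(b_1,{\bf k}^1)_+};a_1+1\bigr)+\sum_{l=2}^{n}\sum_{d=0}^{a_l}(-1)^{b_l+{\rm wt}({\bf k}^l)+d}\,\xi\bigl(({\bf k}_{l-1})_-;d+1\bigr)\,\xi\bigl(\{1\}^{m-1},\overleftarrow{(b_l,{\bf k}^l)_+};a_l-d+1\bigr).
\]
I would attack this by expanding \emph{every} Arakawa--Kaneko zeta value on the right-hand side by Theorem~\ref{ktThm3.6} (equivalently, by a second application of Theorem~\ref{mainThm1}): $\xi\bigl(\{1\}^{m-1},\overleftarrow{(b_l,{\bf k}^l)_+};\,\cdot\,\bigr)$ turns into a $\mathbb{Q}$-linear combination of terms $b(\,\cdot\,;{\bf f})\binom{\,\cdot\,+e-1}{e}\zeta(\,\cdot\,+{\bf f};\,\cdot\,+e)$ governed by the block structure of $\overleftarrow{(b_l,{\bf k}^l)_+}$, and $\xi\bigl(({\bf k}_{l-1})_-;d+1\bigr)$ into a combination of terms $\binom{d+e-1}{e}\zeta(\,\cdot\,;d+e+1)$. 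After this, both sides are $\mathbb{Q}$-linear combinations of products of multiple zeta functions of Euler--Zagier type at integer arguments, i.e.\ of products of multiple zeta values times binomial coefficients. On the $T$-side one writes $\binom{m+d}{d}\zeta({\bf j};m+d+1)=\binom{m+d}{d}\zeta({\bf j},m+d+1)$ and applies the duality formula; duality sends the trailing entry $m+d+1$ to a prefix $\{1\}^{m-1}$ followed by an index ending in an entry $\ge2$ --- precisely the shape forced by the $\{1\}^{m-1}$ prefix on the right. Proving the identity then amounts to a term-by-term comparison: the ``head'' factor $\zeta\bigl(({\bf k}_{l-1})^\dagger+{\bf e}_1\bigr)$ of a term of $T$ must match the expansion of $\xi\bigl(({\bf k}_{l-1})_-;d+1\bigr)$, the ``tail'' factor $\binom{m+d}{d}\zeta\bigl((b_l,{\bf k}^l)^\vee+{\bf e}_2;m+d+1\bigr)$ must match (after duality) the expansion of $\xi\bigl(\{1\}^{m-1},\overleftarrow{(b_l,{\bf k}^l)_+};a_l-d+1\bigr)$, the full $l=1$ contribution of $T$ (where necessarily ${\bf e}_1=\varnothing$) assembles into the single term $(-1)^{{\rm wt}({\bf k})-a_1}\xi\bigl(\{1\}^{m-1},\overleftarrow{(b_1,{\bf k}^1)_+};a_1+1\bigr)$ --- here one uses $b_1+{\rm wt}({\bf k}^1)={\rm wt}({\bf k})-a_1$ --- and the two families of binomials $\binom{m+d}{d}$ and $\binom{d+e-1}{e}$ are reconciled by a Vandermonde-type identity.

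I expect the main obstacle to be the bookkeeping of this comparison rather than any single ingredient: one must keep track of the signs (the factor $(-1)^{{\rm wt}({\bf e}_1)}$ in $T$, the factor $(-1)^{b_l+{\rm wt}({\bf k}^l)}$, and the sign created by duality must combine into the $(-1)^{b_l+{\rm wt}({\bf k}^l)+d}$ of the corollary), verify that the weight $a_l$ is split compatibly among $d,{\bf e}_1,{\bf e}_2$ on one side and among the various $\xi$-expansions on the other, and handle the boundary cases: $b_l=0$ (where $(b_l,{\bf k}^l)^\vee$, hence Hoffman's dual, is read with the stated conventions), $d=a_l$, and the distinction $l=1$ versus $l\ge2$, which is exactly what separates the single $\xi$-term from the $\xi\cdot\xi$ products. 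A convenient consistency check throughout is the case $n=1$: there ${\bf k}=(\{1\}^{a_1-1},b_1+1)$, the $\xi\cdot\xi$ sum is empty, $T$ is the second sum of Theorem~\ref{akThm8} at $s=m+1$, and the corollary reduces to the duality-type formula of Arakawa and Kaneko recalled as Theorem~\ref{akThm9(2)}.
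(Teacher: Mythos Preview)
Your setup and your handling of the first two terms match the paper's first proof exactly: substitute $s=m+1$ in Theorem~\ref{mainThm1}, and for the middle sum use duality together with ${\bf k}^\vee=\overleftarrow{(({\bf k}_+)^\dagger)_-}$ to rewrite $\zeta((j+1,{\bf k}^l)^\vee,m+1)$ as $\zeta(\{1\}^{m-1},\overleftarrow{(j+2,{\bf k}^l)_+})$. The divergence is entirely in the treatment of the third term $T$.

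You propose to expand every $\xi$-value on the target side by Theorem~\ref{ktThm3.6} (equivalently Theorem~\ref{mainThm1}) and then match. This is the wrong direction, and the wrong tool. Those theorems output $\mathbb{Q}$-linear combinations whose coefficients are themselves multiple zeta values; for an index such as $(\{1\}^{m-1},\overleftarrow{(b_l,{\bf k}^l)_+})$ the block decomposition of Theorem~\ref{mainThm1} is long and bears no direct resemblance to the shape of $T$. A ``term-by-term'' comparison of two such expansions is not a priori well-posed, since products of MZVs satisfy many relations, and your proposed Vandermonde reconciliation of $\binom{m+d}{d}$ with $\binom{d+e-1}{e}$ is a symptom of having introduced an unnecessary layer.

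The paper instead \emph{contracts} $T$ directly, using Lemma~\ref{kt_fcn}:
\[
\xi({\bf l};m)=\sum_{\substack{{\rm wt}({\bf j})=m-1\\ {\rm dep}({\bf j})={\rm dep}(({\bf l}_+)^\dagger)}} b(({\bf l}_+)^\dagger;{\bf j})\,\zeta(({\bf l}_+)^\dagger+{\bf j}).
\]
Concretely: absorb $d$ and $\binom{m+d}{d}=\binom{(m+1)+d-1}{d}$ into the ${\bf e}_2$-sum by appending $d$ as one extra component, so the summand becomes $b\bigl(((b_l,{\bf k}^l)^\vee,m+1);{\bf e}_2\bigr)\,\zeta\bigl(((b_l,{\bf k}^l)^\vee,m+1)+{\bf e}_2\bigr)$ with ${\rm dep}({\bf e}_2)=n_2+1$; no Vandermonde identity is needed. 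Now stratify by $j={\rm wt}({\bf e}_1)$ and apply Lemma~\ref{kt_fcn} twice. Since $(({\bf k}_{l-1})_-)_+={\bf k}_{l-1}$, the ${\bf e}_1$-sum at weight $j$ is exactly $\xi(({\bf k}_{l-1})_-;j+1)$; and the ${\bf e}_2$-sum at weight $a_l-j$ is $\xi\bigl((((b_l,{\bf k}^l)^\vee,m+1)^\dagger)_-;a_l-j+1\bigr)=\xi\bigl(\{1\}^{m-1},\overleftarrow{(b_l,{\bf k}^l)_+};a_l-j+1\bigr)$. For $l=1$ one has ${\bf k}_0=\varnothing$, the ${\bf e}_1$-sum reduces to the single term $j=0$ with value $1$, and the contribution is the lone $\xi$-term you move to the left, with sign $(-1)^{b_1+{\rm wt}({\bf k}^1)}=(-1)^{{\rm wt}({\bf k})-a_1}$ exactly as you observed.

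In short, the missing ingredient in your plan is Lemma~\ref{kt_fcn}; with it, $T$ collapses in two applications and no expand-and-match bookkeeping is required.
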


On the left-hand side of the equation in Theorem \ref{mainCor}, integers $a_1,b_1,\ldots,a_n,b_n,m$ in the first term appears in the order appears in the order, and in the reverse order in the $2$ term like duality formula.
We note that Theorem \ref{mainThm1} and \ref{mainThm2} generalize relations \eqref{cexuThm3.3} and \eqref{cexu2.8}, respectively. 

This paper is organized as follows. 
In \S2, we introduce a generalization of integrals on $2$-posets which are defined by Yamamoto and prepare several lemmas to show main theorems. 
In \S3, we prove main theorems. 
In \S4, we give two proofs of Corollary \ref{mainCor}.
One is to apply Theorem \ref{mainThm1} and the other is to use $2$-posets.
In \S5, we also give level $2$ versions of Theorem \ref{mainThm1}, \ref{mainThm2} and Corollary \ref{mainCor}. 

\section{Preliminaries}
\subsection{Algebraic setup}

We recall the algebraic setup for multiple zeta values which Hoffman introduced in \cite{hof_1997}. 
Let $\mathfrak{H}=\mathbb{Q} \langle e_0,e_1 \rangle$ be the non-commutative polynomial algebra of indeterminates $e_0$ and $e_1$ over $\mathbb{Q}$. 
We define the subalgebras $\mathfrak{H}^0$ and $\mathfrak{H}^1$ by
$$\mathfrak{H}^0=\mathbb{Q}+e_1 \mathfrak{H} e_0 \subset \mathfrak{H}^1=\mathbb{Q}+e_1 \mathfrak{H} \subset \mathfrak{H}.$$
We define the ${\mathbb Q}$-bilinear commutative product $\sh$ by
$$1\,\sh\, w =w \,\sh\, 1=w\quad(w\in \mathfrak{H}),$$
$$av\,\sh\, bw=a(v\,\sh\, bw)+b(av\,\sh\, w) \quad(a,b\in\{e_0,e_1\},v,w\in\mathfrak{H}).$$
The product $\sh$ called the {\it shuffle product}. 
We denote by $\mathfrak{H}_{\sh}$ the commutative $\mathbb{Q}$-algebra $\mathfrak{H}$ equipped with multiplication $\sh$. 
Then the subspace $\mathfrak{H}^1$ (resp. $\mathfrak{H}^0$) of $\mathfrak{H}$ is closed under $\sh$ and becomes a subalgebra of $\mathfrak{H}_{\sh}$ denoted by $\mathfrak{H}_{\sh}^1$ (resp. $\mathfrak{H}_{\sh}^0$). 

Each monomial $e_1e_0^{k_1-1}\cdots e_1e_0^{k_r-1}$ in $\mathfrak{H}^1$ (resp. $\mathfrak{H}^0$) corresponds to the index (resp. admissible index) $(k_1,\ldots,k_r)$. 
Using this assignment, for $0<z<1$, we obtain the $\mathbb{Q}$-linear map ${\rm Li}(\ \cdot\ ;z):\mathfrak{H}^1 \to \mathbb{R}$ which sends a monomial $e_1e_0^{k_1-1}\cdots e_1e_0^{k_r-1}$ to ${\rm Li}({\bf k};z)$ for the index ${\bf k}$. 
We also define the $\mathbb{Q}$-linear map $\zeta:\mathfrak{H}^0 \to \mathbb{R}$ similarly.  
The map ${\rm Li}(\ \cdot\ ;z):\mathfrak{H}^1_{\sh}\to\mathbb{R}$ is a $\mathbb{Q}$-algebra homomorphism; 
$${\rm Li}({\bf k}\,\sh\,{\bf l};z)={\rm Li}({\bf k};z){\rm Li}({\bf l};z)$$
for any indices ${\bf k}$ and ${\bf l}$. 
Similarly, the map $\zeta:\mathfrak{H}^0_{\sh}\to\mathbb{R}$ also gives a $\mathbb{Q}$-algebra homomorphism; 
$$\zeta({\bf k}\,\sh\,{\bf l})=\zeta({\bf k})\zeta({\bf l})$$
for any admissible indices ${\bf k}$ and ${\bf l}$. 

\subsection{$2$-posets and their associated integrals}

We introduce some integrals associated to $2$-posets which give iterated integral representations of the multi-polylogarithms and recall their properties. 
The ideas of these integrals were mainly given by Yamamoto \cite{y1}. 

\begin{teigi}[cf. {\cite[Definition 2.1]{y1}}]
\begin{itemize}
\item[(1)] A {\it $2$-poset} is a pair $X=((X,\preceq ),\delta_X )$ consisting of a finite partially ordered set $( X,\preceq )$ and a labeling map $\delta_X :X \rightarrow \{ 0,1 \}$. 
A 2-poset $X$ is called {\it semi-admissible} if $\delta_X ( x ) =1$ for all minimal elements $x\in X$. 
In addition, a semi-admissible 2-poset $X$ is called {\it admissible} if $\delta_X( x ) =0$ for all maximal elements $x\in X$. 
\item[(2)] Let $0<z<1$.  
For a semi-admissible $2$-poset $X$, we define the associated integral 
$$
I_z ( X ) = \int_{\Delta_z ( X ) } \prod_{x \in X} \omega_{\delta_X ( x )} ( t_x), 
$$
where
$$
\Delta_z ( X ) = \left\{ ( t_x )_x \in (0,z)^X \mid t_x < t_y \mbox{ if } x \prec y \right\}
$$
and
$$
\omega_0 ( t)= \frac{dt}{t}, \ \ \omega_{1} ( t) = \frac{dt}{1-t}. 
$$
\item[(3)] For an admissible $2$-poset $X$, the integral $I_z(X)$ converges as $z\rightarrow 1-0$. 
We denote this limit by $I(X)$. 
\end{itemize}
\end{teigi}

We use Hasse diagrams to indicate $2$-posets, with vertices $\circ$ and $\bullet$ corresponding to $\delta_X ( x )=0$ and $1$, respectively.  
(See {\cite[\S 2]{y1}}. )

If a semi-admissible $2$-poset $X$ is totally ordered, the integral $I_z(X)$ gives an integral representation of multi-polylogarithm functions. 
In fact, for any index $(k_1,\ldots,k_r)$, we have
\begin{equation*}
{\rm Li}(k_1,k_2,\dots,k_r;z)=
I_z\left(~
 	\begin{xy}
(	0	,	-26	)	*{\bullet};
(	4	,	-22	)	*{\circ};
(	12	,	-14	)	*{\circ};
(	16	,	-10	)	*{\bullet};
(	20	,	-6	)	*{\circ};
(	28	,	2	)	*{\circ};
(	40	,	14	)	*{\bullet};
(	44	,	18	)	*{\circ};
(	52	,	26	)	*{\circ};
(	0.45	,	-25.55	);(	3.55	,	-22.45	)**\dir{-},
(	6.00	,	-20.00	);(	10.00	,	-16.00	)**\dir{.},
(	12.55	,	-13.45	);(	15.55	,	-10.45	)**\dir{-},
(	16.55	,	-9.45	);(	19.55	,	-6.45	)**\dir{-},
(	22.00	,	-4.00	);(	26.00	,	0.00	)**\dir{.},
(	30.00	,	4.00	);(	38.00	,	12.00	)**\dir{.},
(	40.45	,	14.45	);(	43.55	,	17.55	)**\dir{-},
(	46.00	,	20.00	);(	50.00	,	24.00	)**\dir{.},
{(	0	,	-24.75	)\ar@/	^	2	mm	/@{-}	^	(	0.6	){	k_1	}(	10.75	,	-14	)},
{(	16	,	-8.75	)\ar@/	^	2	mm	/@{-}	^	(	0.6	){	k_2	}(	27	,	2	)},
{(	40	,	15.25	)\ar@/	^	2	mm	/@{-}	^	(	0.6	){	k_r	}(	50.75	,	26	)},
	\end{xy} 
~\right).
\end{equation*}
For convention, we denote the empty $2$-poset by $\varnothing$ and put $I_z(\varnothing)=1$. 
This corresponds to the fact that ${\rm Li}(\varnothing;z)=1$ for the empty index $\varnothing$. 

Similarly, the value $\xi(k_1,\ldots,k_r;m)$ at positive integer $m$ can be represented by the integral as
\begin{equation}
\label{xi_poset}
\xi(k_1,\ldots,k_r;m) = 
I \left(~
\begin{xy}
(	0	,	-22.00	)	*+{}	*{\bullet}	="k_1-l";
(	0	,	-18.00	)	*+{}	*{\circ};	
(	0	,	-10.00	)	*+{}	*{\circ}	="k_1-r";
(	0	,	2.00	)	*+{}	*{\bullet}	="k_r-l";
(	0	,	6.00	)	*+{}	*{\circ};	
(	0	,	14.00	)	*+{}	*{\circ}	="k_r-r";
(	8	,	22.00	)		*{\circ};	
(	16	,	14.00	)	*+{}	*{\bullet}	="m_top";
(	16	,	6.00	)	*+{}	*{\bullet}	="m_btm";
(	0.000	,	-21.300	);(	0.000	,	-18.600	)**\dir{-},
(	0.000	,	-16.000	);(	0.000	,	-12.000	)**\dir{.},
(	0.000	,	-8.000	);(	0.000	,	0.000	)**\dir{.},
(	0.000	,	2.700	);(	0.000	,	5.400	)**\dir{-},
(	0.000	,	8.000	);(	0.000	,	12.000	)**\dir{.},
(	0.550	,	14.550	);(	7.550	,	21.550	)**\dir{-},
(	8.450	,	21.550	);(	15.450	,	14.550	)**\dir{-},
(	16.000	,	12.000	);(	16.000	,	8.000	)**\dir{.},
{	"k_1-l"	\ar@/	^	3	mm	/@{-}	^	(	0.5	){	k_1	}	"k_1-r"	},
{	"k_r-l"	\ar@/	^	3	mm	/@{-}	^	(	0.5	){	k_r	}	"k_r-r"	},
{	"m_btm"	\ar@/	_	2.5	mm	/@{-}	_	(	0.5	){	m-1	}	"m_top"	},
\end{xy}
~\right)
\end{equation}
({\cite[Theorem 2.1]{ko_poset}}). 


We also recall an algebraic setup for $2$-posets ({\cite[Remark of \S 2]{y1}} and {\cite[p.2505]{ky})}. 
Let $\mathfrak{P}$ be the ${\mathbb Q}$-algebra generated by the isomorphism classes of $2$-posets, 
whose multiplication is given by the disjoint union $2$-posets. 
We set the subalgebra ${\mathfrak P}^1$ (resp. ${\mathfrak P}^0$) of ${\mathfrak P}$ generated by the classes of semi-admissible (resp. admissible) $2$-posets. 
Then the integral defines ${\mathbb Q}$-algebra homomorphisms $I_z:\mathfrak{P}^1 \rightarrow {\mathbb R}$ and $I:\mathfrak{P}^0 \rightarrow {\mathbb R}$. 
Moreover, there is a unique ${\mathbb Q}$-algebra homomorphism $W: {\mathfrak P}\rightarrow {\mathfrak H}_{\sh}$ which satisfies the following two condition: 
\begin{itemize}
\item[(1)] If a $2$-poset $X=\{x_1 \prec x_2 \prec \cdots \prec x_k \}$ is totally ordered, 
$$W(X)=e_{\delta_X (x_1)} e_{\delta_X (x_2)} \cdots e_{\delta_X (x_k)}.$$   
\item[(2)]
For non-comparable elements $a$, $b$ of  a $2$-poset $X$,
\begin{equation}\label{xab}
W(X)=W(X_a^b)+W(X_b^a). 
\end{equation}
Here, $X_a^b$ denotes the $2$-poset that is obtained from $X$ by adjoining the relation $a \prec b$. 
\end{itemize}
We also have $W({\mathfrak P})={\mathfrak H}_{\sh}$, $I_z={\rm Li}(W(\ \cdot \ );z): {\mathfrak P}^1\rightarrow {\mathbb R}$ and $I=\zeta \circ W: {\mathfrak P}^0\rightarrow {\mathbb R}$. 

For a $2$-poset $X=((X,\preceq),\delta_X)$, let $X^t=((X,\preceq^t), \delta_{X^t})$ be its {\it transpose} which satisfy $y \preceq^\dagger x$ if $x \preceq y$ and $\delta_{X^t}=1-\delta_X$. 
We extend the map $X \mapsto X^t$ to a ${\mathbb Q}$-linear automorphism on ${\mathfrak P}$. 
The following equalities are easily verified from the definition;
$$W(X)^\dagger=W(X^t)$$ 
for any $X \in {\mathfrak P}^0$.  

For an index ${\bf k}=(\{1\}^{a_1-1},b_1+1,\ldots,\{1\}^{a_n-1},b_n+1)$, we write
$$
\begin{xy}
(0,-2.8)			*+	{}						*{\bullet};
(5.6,2.8)			*++	{\bf k}						*\frm{o};
(	0.5800	,	-2.220	);(	3.580	,	0.780	)**\dir{-},
\end{xy}
=
\begin{xy}
(0,-26)			*+	{}						*{\bullet}	="a1_l";
(8,-18)			*+	{}						*{\bullet}	="a1_r";
{(0,-24.8) \ar @/^2mm/ @{-}^(0.6){a_{1}} (6.8,-17.8)}, 
(12,-14)			*+	{}						*{\circ}	="b1_l";
(20,-6)			*+	{}						*{\circ}	="b1_r";
{(12,-12.8) \ar @/^2mm/ @{-}^(0.6){b_{1}} (18.8,-5.8)}, 
(32,6)			*+	{}						*{\bullet}	="as_l";
(40,14)			*+	{}						*{\bullet}	="as_r";
{(32,7.2) \ar @/^2mm/ @{-}^(0.6){a_{n}} (38.8,14.2)},
(44,18)			*+	{}						*{\circ}	="bs_l";
(52,26)			*+	{}						*{\circ}	="bs_r";
{(44,19.2) \ar @/^2mm/ @{-}^(0.6){b_{n}} (50.8,26.2)},
(2.08,-23.92);(6.08,-19.92) **\dir{.},
(8.58,-17.42);(11.58,-14.42) **\dir{-},
(14.08,-11.92);(18.08,-7.92) **\dir{.},
(22.08,-3.92);(30.08,4.08) **\dir{.},
(34.08,8.08);(38.08,12.08) **\dir{.}, 
(40.58,14.58);(43.58,17.58) **\dir{-},
(46.08,20.08);(50.08,24.08) **\dir{.},
\end{xy}.
$$
%

\subsection{Lemmas}

To prove Theorem \ref{mainThm2}, we prepare several lemmas. 

\begin{hodai}[{\cite[Lemma 1 (i)]{ak_fcn}}]\label{hodai2.3}
For any index $(k_1,\ldots,k_r)$, we have
$$
\frac{d}{dz} {\rm Li}(k_1,\ldots,k_r;z)
=
\begin{cases}
\displaystyle{\frac{1}{z}{\rm Li}(k_1,\ldots,k_{r-1},k_r-1;z)} &\quad (k_r\geq2), \\
\\
\displaystyle{\frac{1}{1-z}{\rm Li}(k_1,\ldots,k_{r-1};z)} &\quad (k_r=1). 
\end{cases}
$$

\end{hodai}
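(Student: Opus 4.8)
The plan is to differentiate the defining power series of ${\rm Li}({\bf k};z)$ term by term. The series
$$
{\rm Li}(k_1,\ldots,k_r;z)=\sum_{0<m_1<\cdots<m_r}\frac{z^{m_r}}{m_1^{k_1}\cdots m_r^{k_r}}
$$
converges absolutely and uniformly on every disk $|z|\le\rho<1$: the number of tuples $0<m_1<\cdots<m_r$ with $m_r=m$ is $\binom{m-1}{r-1}\le m^{r-1}$, so the tail is dominated by a constant times $\sum_m m^{r-1}\rho^m$, and the formally differentiated series is likewise dominated by a constant times $\sum_m m^{r}\rho^{m-1}$. Hence differentiation may be carried out inside the summation, giving
$$
\frac{d}{dz}{\rm Li}(k_1,\ldots,k_r;z)
=\sum_{0<m_1<\cdots<m_r}\frac{m_r\,z^{m_r-1}}{m_1^{k_1}\cdots m_r^{k_r}}
=\frac{1}{z}\sum_{0<m_1<\cdots<m_r}\frac{z^{m_r}}{m_1^{k_1}\cdots m_r^{k_r-1}}.
$$

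First, if $k_r\ge2$, the exponent $k_r-1$ is still a positive integer, so the last sum is by definition ${\rm Li}(k_1,\ldots,k_{r-1},k_r-1;z)$, which yields the first case at once. Second, if $k_r=1$, the exponent $k_r-1$ becomes $0$ and the summand no longer involves $m_r$ in the denominator; I would then split off the summation over $m_r$ and evaluate the geometric series $\sum_{m_r>m_{r-1}}z^{m_r}=z^{m_{r-1}+1}/(1-z)$ (for $r\ge2$; for $r=1$ this is simply $\sum_{m_1\ge1}z^{m_1}=z/(1-z)$). Substituting back, one factor of $z$ cancels the $1/z$ in front and there remains $\frac{1}{1-z}\sum_{0<m_1<\cdots<m_{r-1}}z^{m_{r-1}}/(m_1^{k_1}\cdots m_{r-1}^{k_{r-1}})=\frac{1}{1-z}{\rm Li}(k_1,\ldots,k_{r-1};z)$, the case $r=1$ being covered by the convention ${\rm Li}(\varnothing;z)=1$.

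The computation is entirely elementary, and there is no serious obstacle; the only points deserving a line of justification are the interchange of $d/dz$ with the infinite sum and, in the case $k_r=1$, the re-summation of the inner geometric series — both immediate from absolute and locally uniform convergence on $|z|<1$. (One could alternatively read the two cases off the iterated-integral representation of ${\rm Li}({\bf k};z)$, differentiating with respect to the upper endpoint $z$ so that the outermost form $\omega_0$ or $\omega_1$ is stripped off, but the power-series argument above is the most direct.)
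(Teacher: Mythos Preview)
Your proof is correct: term-by-term differentiation of the defining series, together with the geometric summation in the case $k_r=1$, is exactly the standard argument, and your justification of locally uniform convergence is adequate. The paper itself does not prove this lemma at all---it simply quotes it from \cite[Lemma~1~(i)]{ak_fcn}---so there is nothing further to compare; the original proof in Arakawa--Kaneko proceeds just as you do.
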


\begin{hodai}[{\cite[Theorem 2.5]{kt_fcn}}, {\cite[Theorem 2.2]{ko_poset}}]\label{kt_fcn}
For any positive index ${\bf k}$ and any positive integer $m$, we have
\begin{equation*}\label{xirel}
\xi({\bf k};m) = \sum_{\substack{{\rm wt}({\bf j})=m-1 \\ {\rm dep}({\bf j})=n}} b(({\bf k}_+)^\dagger;{\bf j}) \zeta(({\bf k}_+)^\dagger+{\bf j})
\end{equation*}
where sum is over all non-negative indices ${\bf j}$ of weight $m-1$ and depth $n={\rm dep}(({\bf k}_+)^\dagger)$. 
\end{hodai}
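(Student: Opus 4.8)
The plan is to realise $\xi({\bf k};m)$ as an integral over a $2$-poset, apply duality for such integrals, and then read the formula off from a single shuffle expansion. First I would invoke the integral representation \eqref{xi_poset}, which writes $\xi({\bf k};m)=I(X)$ for the admissible $2$-poset $X$ made of two totally ordered chains — one carrying the word $w_{\bf k}=e_1e_0^{k_1-1}\cdots e_1e_0^{k_r-1}$, the other consisting of $m-1$ copies of $\bullet$ — all of whose elements lie below one maximal vertex labelled $\circ$. Repeatedly resolving the incomparabilities between the two chains by \eqref{xab} gives $W(X)=(w_{\bf k}\,\sh\,e_1^{m-1})e_0$, which lies in $\mathfrak H^0$, so that $\xi({\bf k};m)=\zeta\big((w_{\bf k}\,\sh\,e_1^{m-1})e_0\big)$ via $I=\zeta\circ W$. (One can reach the same identity analytically by substituting $u=1-e^{-t}$ in \eqref{intrep_ak}, recognising $(-\log(1-u))^{m-1}/(m-1)!={\rm Li}(\{1\}^{m-1};u)$ from $\sum_{j\ge0}{\rm Li}(\{1\}^j;u)x^j=(1-u)^{-x}$, and using that ${\rm Li}(\,\cdot\,;u)$ is a shuffle homomorphism together with $\int_0^1{\rm Li}(W;u)\,du/u=\zeta(We_0)$.)

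Next I would use duality for $2$-poset integrals: since $I=\zeta\circ W$, $W(X^t)=W(X)^\dagger$, and $\zeta(w^\dagger)=\zeta(w)$ on $\mathfrak H^0$ (the duality formula for multiple zeta values), we get $I(X)=I(X^t)$. The transpose $X^t$ is the ``$\vee$-shaped'' $2$-poset with a single minimal vertex labelled $\bullet$, above which sit two incomparable chains, one with word $w_{\bf k}^\dagger$ (the reverse of $w_{\bf k}$ with $e_0,e_1$ interchanged) and one consisting of $m-1$ copies of $\circ$. Resolving incomparabilities again yields $W(X^t)=e_1\big(w_{\bf k}^\dagger\,\sh\,e_0^{m-1}\big)$, hence $\xi({\bf k};m)=\zeta\big(e_1(w_{\bf k}^\dagger\,\sh\,e_0^{m-1})\big)$. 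Writing $({\bf k}_+)^\dagger=(l_1,\dots,l_n)$ and using $w_{{\bf k}_+}=w_{\bf k}e_0$, one checks $e_1w_{\bf k}^\dagger=w_{({\bf k}_+)^\dagger}$, so $w_{\bf k}^\dagger=e_0^{l_1-1}e_1e_0^{l_2-1}e_1\cdots e_1e_0^{l_n-1}$; note $l_1\ge1$ and, since ${\bf k}_+$ is admissible, $l_n\ge2$.

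It then remains to expand $w_{\bf k}^\dagger\,\sh\,e_0^{m-1}$. All the letters $e_1$ come from $w_{\bf k}^\dagger$ alone and keep their relative positions, so each term has the shape $e_0^{l_1-1+j_1}e_1e_0^{l_2-1+j_2}e_1\cdots e_1e_0^{l_n-1+j_n}$ with $j_i\ge0$ and $j_1+\cdots+j_n=m-1$; placing $j_i$ of the $m-1$ inserted letters into the $i$-th run of $e_0$'s (of length $l_i-1$) happens with multiplicity $\prod_{i=1}^n\binom{l_i-1+j_i}{j_i}=b(({\bf k}_+)^\dagger;{\bf j})$. Prepending $e_1$ converts each such word into $w_{({\bf k}_+)^\dagger+{\bf j}}$, and applying $\zeta$ gives exactly $\xi({\bf k};m)=\sum_{{\bf j}}b(({\bf k}_+)^\dagger;{\bf j})\zeta(({\bf k}_+)^\dagger+{\bf j})$, the sum over non-negative indices ${\bf j}$ of depth $n$ and weight $m-1$. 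The technical heart of the argument — and the only place requiring care — is this last step: verifying that the shuffle multiplicities are precisely the products of binomial coefficients $b(({\bf k}_+)^\dagger;{\bf j})$ and that every resulting word already lies in $\mathfrak H^0$; both are routine once the run structure of $w_{\bf k}^\dagger$ is fixed.
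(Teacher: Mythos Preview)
Your argument is correct. The paper itself does not supply a proof of this lemma; it is quoted from \cite[Theorem 2.5]{kt_fcn} and \cite[Theorem 2.2]{ko_poset}, so there is no ``paper's own proof'' to compare against in detail. That said, your route---realising $\xi({\bf k};m)$ via the $2$-poset integral \eqref{xi_poset}, passing to the transpose to obtain $\zeta\bigl(e_1(w_{\bf k}^\dagger\,\sh\,e_0^{m-1})\bigr)$, and then reading off the binomial multiplicities $b(({\bf k}_+)^\dagger;{\bf j})$ from the shuffle of $e_0^{m-1}$ into the runs of $e_0$'s in $w_{\bf k}^\dagger$---is exactly the combinatorial mechanism advertised by the second citation \cite{ko_poset}, and each step checks out. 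The identification $e_1 w_{\bf k}^\dagger=w_{({\bf k}_+)^\dagger}$ and the admissibility of every term (since $l_n\ge2$) are handled correctly.
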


\begin{hodai}\label{hodai_lim}
For  any admissible index ${\bf l}$ and non-negative integer $a$, we have 
\begin{align*}
&\lim_{z\to 1 }\left\{ {\rm Li}({\bf l},\{1\}^a;z) - \sum_{d= 1}^a (-1)^{a-d} \sum_{\substack{{\rm wt}({\bf e})+d=a \\ {\rm dep}({\bf e})=n }}  b({\bf l}^\dagger;{\bf e}) \zeta({\bf l}^\dagger+{\bf e}) {\rm Li}(\{1\}^d;z) \right\} \\
&= (-1)^a \sum_{\substack{{\rm wt}({\bf e})=a \\ {\rm dep}({\bf e})=n }}  b({\bf l}^\dagger;{\bf e}) \zeta({\bf l}^\dagger+{\bf e}).
\end{align*}
\end{hodai}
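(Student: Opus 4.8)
The plan is to prove Lemma \ref{hodai_lim} by induction on $a$, using the differential relations in Lemma \ref{hodai2.3} together with the known limiting behaviour ${\rm Li}(\{1\}^d;z)=\frac{(-\log(1-z))^d}{d!}$, which blows up like a power of $\log(1-z)$ as $z\to1$. The point of the subtracted sum is precisely to cancel all the divergent $\log(1-z)$ contributions, so that the remaining combination tends to a finite limit, which one then identifies with $(-1)^a\sum_{{\rm wt}({\bf e})=a}b({\bf l}^\dagger;{\bf e})\zeta({\bf l}^\dagger+{\bf e})$. The base case $a=0$ reads $\lim_{z\to1}{\rm Li}({\bf l};z)=\zeta({\bf l})$, which holds since ${\bf l}$ is admissible, and $b({\bf l}^\dagger;\varnothing)=1$, $\zeta({\bf l}^\dagger)=\zeta({\bf l})$ by the duality formula for multiple zeta values stated in the introduction. (Here I use that $\varnothing$ is the unique non-negative index of weight and depth $0$.)

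For the inductive step, write $F_a(z)$ for the bracketed expression in the statement. Using Lemma \ref{hodai2.3}, $\frac{d}{dz}{\rm Li}({\bf l},\{1\}^a;z)=\frac{1}{1-z}{\rm Li}({\bf l},\{1\}^{a-1};z)$ when $a\ge1$, and likewise $\frac{d}{dz}{\rm Li}(\{1\}^d;z)=\frac{1}{1-z}{\rm Li}(\{1\}^{d-1};z)$. So $F_a$ satisfies a first-order ODE of the form $(1-z)F_a'(z)=G_{a-1}(z)$, where $G_{a-1}(z)$ is built from ${\rm Li}({\bf l},\{1\}^{a-1};z)$ and the lower ${\rm Li}(\{1\}^{d-1};z)$ with the same coefficients. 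The key observation is that $G_{a-1}(z)$ is, up to a constant, exactly $F_{a-1}(z)$ plus a convergent remainder: the induction hypothesis tells us $F_{a-1}(z)$ tends to a known constant. Integrating $F_a'(z)=\frac{1}{1-z}\big(\text{const}+o(1)\big)$ would again produce a logarithmic divergence unless that constant vanishes — but it does \emph{not} vanish in general, so the correct bookkeeping is to integrate and observe that the leading $\log$ term produced is precisely absorbed by the $d=a$ term (with coefficient involving ${\rm wt}({\bf e})=a$, ${\rm dep}({\bf e})=n$, which after the derivative drops to the $d=a-1$, ${\rm wt}({\bf e})=a-1$ piece). Tracking the binomial coefficients $b({\bf l}^\dagger;{\bf e})$ under this shift is the combinatorial heart of the argument: one needs the Pascal-type identity relating $b({\bf l}^\dagger;{\bf e})$ for ${\bf e}$ of depth $n$ and weight $a$ to the lower-weight data, which follows from $\binom{k+e-1}{e}=\binom{k+e-2}{e-1}+\binom{k+e-2}{e}$ applied in the last slot.

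An alternative, cleaner route — which I would actually prefer to present — is to avoid ODEs and instead invoke Lemma \ref{kt_fcn} directly. Indeed ${\rm Li}({\bf l},\{1\}^a;z)$ is the generating-type object whose regularized value at $z\to1$ is governed by the stuffle/shuffle regularization, and the combination on the left of Lemma \ref{hodai_lim} is exactly the shuffle-regularized value ${\rm Li}^{\,\sh}({\bf l},\{1\}^a;1)$ written in the $T=0$ specialization, with $T$ corresponding to $-\log(1-z)$. The identity to be proved then amounts to: the shuffle regularization of $\zeta({\bf l},\{1\}^a)$ equals $(-1)^a\sum_{{\rm wt}({\bf e})=a}b({\bf l}^\dagger;{\bf e})\zeta({\bf l}^\dagger+{\bf e})$. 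This is a known consequence of the "${\bf l}_+^\dagger$" form of Lemma \ref{kt_fcn} specialized appropriately (it is essentially the $\xi({\bf l};a+1)$ formula read backwards, since $\xi({\bf l};a+1)$ and this regularized value are linked through the integral representation \eqref{xi_poset}), so one can cite Lemma \ref{kt_fcn} and the standard comparison between shuffle-regularized MZVs and iterated integrals over $(0,z)$.

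I expect the main obstacle to be the bookkeeping of the binomial coefficients and signs when passing between weight $a$ and weight $a-1$ in the inductive step — specifically verifying that the constant produced by integrating the divergent term matches the $d=a$ coefficient $(-1)^{a-a}b({\bf l}^\dagger;{\bf e})$ with ${\rm wt}({\bf e})=a$ after one differentiation drops it into the recursion; this requires the Pascal identity for $b$ in the last argument and careful handling of the depth-$n$ constraint (including the degenerate behaviour when the last entry of ${\bf l}^\dagger$ interacts with the appended $1$'s). The $z\to1$ limit interchanges (term-by-term convergence of the finite sums) are routine once the divergent pieces are isolated.
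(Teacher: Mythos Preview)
Your inductive ODE approach has a real gap: it establishes convergence of
\[
F_a(z)={\rm Li}({\bf l},\{1\}^a;z)-\sum_{d=1}^{a}(-1)^{a-d}C_{a-d}\,{\rm Li}(\{1\}^d;z),
\qquad C_j:=\sum_{\substack{{\rm wt}({\bf e})=j\\ {\rm dep}({\bf e})=n}}b({\bf l}^\dagger;{\bf e})\zeta({\bf l}^\dagger+{\bf e}),
\]
but does not determine the limit. Concretely, differentiating gives
\[
(1-z)F_a'(z)=F_{a-1}(z)-(-1)^{a-1}C_{a-1},
\]
and by the induction hypothesis the right-hand side tends to $0$ (contrary to your remark, the constant \emph{does} vanish). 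With a suitable rate of decay this shows $F_a'$ is integrable on $[0,1)$, hence $F_a(z)$ has a finite limit $L_a$. But the ODE then yields
\[
L_a=\int_0^1\frac{F_{a-1}(z)-(-1)^{a-1}C_{a-1}}{1-z}\,dz,
\]
and nothing in your argument evaluates this integral to $(-1)^aC_a$. No amount of Pascal-type juggling of the coefficients $b({\bf l}^\dagger;{\bf e})$ will produce the value, because the recursion for $F_a$ in terms of $F_{a-1}$ simply does not see $C_a$; you need a separate identity to pin down the limit.

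Your second route is closer to what the paper does, but you are treating as ``known'' precisely the step that carries the content. The paper proves, in the shuffle algebra and via a one-line telescoping with $2$-posets, the identity
\[
W({\bf l},\{1\}^a)-\sum_{d=1}^{a}(-1)^{a-d}\,W(v_{a-d,0})\ \sh\ W(\{1\}^d)=(-1)^a\,W(v_{a,0}),
\]
where each $v_{j,0}$ is an \emph{admissible} $2$-poset (the word for ${\bf l}$ with $j$ extra $\bullet$'s hanging from its top). The telescoping comes from $W(v_{a-d,0})\,\sh\,W(\{1\}^d)=w_d+w_{d+1}$ with $w_d=W(v_{a-d,d})$. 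Applying ${\rm Li}(\,\cdot\,;z)$ and letting $z\to1$, one replaces ${\rm Li}(W(v_{a-d,0});z)$ by $\zeta(W(v_{a-d,0}))$ using the estimate ${\rm Li}({\bf k};z)-\zeta({\bf k})=O\bigl((1-z)(\log(1-z))^J\bigr)$ from \cite{ikz}. Finally, the identification $\zeta(W(v_{j,0}))=\xi({\bf l};j+1)=C_j$ is exactly Lemma~\ref{kt_fcn} together with the poset representation~\eqref{xi_poset}. So the ``known consequence of Lemma~\ref{kt_fcn}'' you invoke is not Lemma~\ref{kt_fcn} alone: it is Lemma~\ref{kt_fcn} \emph{plus} the shuffle identity above, and the latter is what your write-up is missing.
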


\begin{proof}[Proof of Lemma \ref{hodai_lim}]
For $i,j\geq0$ and $1\leq d \leq a-1$, let
$$
v_{i,j}=
\begin{xy}
(	0	,	2.00	)	*+{}	                                                            *{\bullet};
(	0	,	  8.00	)	*++{\bf l}	                                                    *\frm{o};
(	8.000	,        0.000	)	*+{}	                                                            *{\bullet}      ="h1";	
(	8.000	,       -  8.000	)	*+{}						                 *{\bullet}	="hh";
(	16.000	,        0.000	)	*+{}	                                                            *{\bullet}      ="d1";	
(	16.000	,         8.000	)	*+{}						                 *{\bullet}	="dd";
(	0.000	,        2.800	);(	0.000	,	 5.600	)**\dir{-},
(	1.700	,	6.300	);(	8.000	,	-0.000	)**\dir{-},
(	8.000	,	-6.500	);(	8.000	,	-1.500	)**\dir{.},
(	8.000	,	-8.000	);(	16.000	,	0.000	)**\dir{-},
(      16.000	,	1.500	);(	16.000	,	 6.500	)**\dir{.},
{	"h1"	\ar@/	_	2.5	mm	/@{-}	_	(	0.5	){	i	}	"hh"	},
{	"d1"	\ar@/	_	2.5	mm	/@{-}	_	(	0.5	){	j	}	"dd"	},
\end{xy}
,\qquad
w_d=W(v_{a-d,d}).
$$
We first show
\begin{equation}\label{hodai_lim1}
W \left(~
\begin{xy}
(	0	,	- 10.00	)	*+{}	                                                            *{\bullet};
(	0	,	-  4.00	)	*++{\bf l}	                                                    *\frm{o};
(	0	,	   2.00	)	*+{}	                                                            *{\bullet}      ="a1";	
(	0	,         10.00	)	*+{}						                 *{\bullet}	="aa";
(	0.000	,	-  9.400	);(	0.000	,	- 6.400	)**\dir{-},
(	0.000	,	-  1.500	);(	0.000	,	   1.500	)**\dir{-},
(	0.000	,	  3.500	);(	0.000	,	 8.500	)**\dir{.},
{	"a1"	\ar@/	^	2	mm	/@{-}	^	(	0.5	){	a	}	"aa"	},
\end{xy}
~\right)
-\sum_{d=1}^{a}
(-1)^{a-d}
W (v_{a-d,0})
W \left(~  
\begin{xy}
(	0.000	,        4.000	)	*+{}	                                                            *{\bullet}      ="h1";	
(	0.000	,       -  4.000	)	*+{}						                 *{\bullet}	="hh";
(	0.000	,	-3.200	);(	0.000	,	 2.500	)**\dir{.},
{	"h1"	\ar@/	_	2.5	mm	/@{-}	_	(	0.5	){	d	}	"hh"	},
\end{xy}
~\right)
=
(-1)^a
W (v_{a,0})
.
\end{equation}
Since $W$ is a ring homomorphism satisfying the property \eqref{xab}, we have
$$
W (v_{a-d,0})
W \left(~  
\begin{xy}
(	0.000	,        4.000	)	*+{}	                                                            *{\bullet}      ="h1";	
(	0.000	,       -  4.000	)	*+{}						                 *{\bullet}	="hh";
(	0.000	,	-3.200	);(	0.000	,	 2.500	)**\dir{.},
{	"h1"	\ar@/	_	2.5	mm	/@{-}	_	(	0.5	){	d	}	"hh"	},
\end{xy}
~\right)
=
W \left(~
v_{a-d,0}
\amalg
\begin{xy}
(	0.000	,        4.000	)	*+{}	                                                            *{\bullet}      ="h1";	
(	0.000	,       -  4.000	)	*+{}						                 *{\bullet}	="hh";
(	0.000	,	-3.200	);(	0.000	,	 2.500	)**\dir{.},
{	"h1"	\ar@/	_	2.5	mm	/@{-}	_	(	0.5	){	d	}	"hh"	},
\end{xy}
~\right)
=w_d+w_{d+1}. 
$$
Hence we obtain \eqref{hodai_lim1} by summing them up with signs. 
Applying the map  ${\rm Li}(\ \cdot\ ;z)$ to \eqref{hodai_lim1} and taking the limit as $z\to1$, we obtain
\begin{equation}
\label{hodai_lim2}
\lim_{z\to 1}\left\{
{\rm Li}({\bf l},\{1\}^a;z)
-
\sum_{d=1}^{a}
(-1)^{a-d}
{\rm Li}(W(v_{a-d,0});z)
{\rm Li}(\{1\}^d;z)
\right\}
=
(-1)^a
\zeta(W(v_{a,0}))
.
\end{equation}
On the other hand, 
since for any admissible index ${\bf k}$, there exists a positive integer $J$ such that
$${\rm Li}({\bf k};z)-\zeta({\bf k})=O((1-z)(\log((1-z))^J))$$
 ({\cite[p. 311]{ikz}}), we see that 
\begin{align}\label{hodai_lim3}
\lim_{z\to 1}\left| {\rm Li}(W(v_{a-d,0});z)-\zeta(W(v_{a-d,0})) \right| |{\rm Li}(\{1\}^d;z)|=0.
\end{align}
Combining \eqref{hodai_lim2} with \eqref{hodai_lim3} and using
$$
\zeta(W (v_{a-d,0}))=
\xi({\bf l};a-d+1)
=
\sum_{\substack{{\rm wt}({\bf e})+d=a \\ {\rm dep}({\bf e})=n }}b({\bf l}^\dagger;{\bf e}) \zeta({\bf l}^\dagger+{\bf e})
$$
for $0\leq d \leq a$, which follows from \eqref{xi_poset} and Lemma \ref{kt_fcn}, we obtain the desired result. 
\end{proof}

\section{Proofs of Main Theorems}

\subsection{Proof of Theorem \ref{mainThm2}}

We now prove Theorem \ref{mainThm2}. 

\begin{proof}[Proof of Theorem \ref{mainThm2}]
We proceed by induction on the weight of ${\bf k}=(\{1\}^{a_1-1},b_1+1,\ldots,\{1\}^{a_n-1},b_n+1)$. 
When ${\rm wt}({\bf k})={\rm dep}({\bf k})$ i.e, $n=1,b_1=0$, it is obvious. 
Suppose the weight ${\rm wt}({\bf k})$ of ${\bf k}$ is grater than the depth ${\rm dep}({\bf k})$ and assume the statement holds for any index of weight ${\rm wt}({\bf k})-1$. 

First assume that ${\bf k}$ is admissible i.e., $b_n\geq 1$. 
In this case, the proof is given by the same argument in the one of {\cite[Lemma 3.5]{kt_fcn}} and hence omitted. 

We assume that ${\bf k}$ is not admissible, i.e., $b_n=0$. 
Since 
$$
{\bf k}_-=\begin{cases}
({\bf k}_{n-1},\{1\}^{a_n-2},1) & \mbox{if }a_n\geq2, \\
{\bf k}_{n-1} & \mbox{if }a_n=1,
\end{cases}
$$
it follows from induction hypothesis and duality formula for multiple zeta values that 
\begin{align*}
&{\rm Li}({\bf k}_{-};1-z)  \nonumber\\
& = -\sum_{l=1}^{n-1}  \sum_{j=0}^{b_l-2} (-1)^{j+{\rm wt}({\bf k}^l)-1} 
\zeta({\bf k}_{l-1},\{1\}^{a_l-1},b_l-j){\rm Li}((j+1,{\bf k}^l_{n-1},\{1\}^{a_n-1})^\vee;z) \\
&\quad +\sum_{l=1}^{n-1} (-1)^{b_l+{\rm wt}({\bf k}^l)-1} 
 \sum_{d= 0}^{a_l} 
 \sum_{\substack{{\rm wt}({\bf e}_1)+{\rm wt}({\bf e}_2)+d=a_l \\ {\rm dep}({\bf e}_1)=n_1,\ {\rm dep}({\bf e}_2)=n_2}}
  (-1)^{{\rm wt}({\bf e}_1)} b(({\bf k}_{l-1})^\dagger;{\bf e}_1) \\
&\quad\quad\times   
\zeta(({\bf k}_{l-1})^\dagger+{\bf e}_1) b((b_l,{\bf k}^l_{n-1},\{1\}^{a_n-1})^\vee;{\bf e}_2){\rm Li}(\{1\}^{d};1-z)
  {\rm Li}((b_l,{\bf k}^l_{n-1},\{1\}^{a_n-1})^\vee+{\bf e}_2;z) \\
&\quad + \sum_{d= 0}^{a_n-1} 
 \sum_{\substack{{\rm wt}({\bf e}_1)+d=a_n-1 \\ {\rm dep}({\bf e}_1)=n_1}}
  (-1)^{{\rm wt}({\bf e}_1)} b(({\bf k}_{n-1})^\dagger;{\bf e}_1)    
\zeta(({\bf k}_{n-1})^\dagger+{\bf e}_1) {\rm Li}(\{1\}^{d};1-z) .
\end{align*}
We have
\begin{align}
&\frac{d}{dz}{\rm Li}({\bf k};1-z)
=-\frac{1}{z}{\rm Li}({\bf k}_{n-1},\{1\}^{a_n-1};1-z)  \nonumber\\
&=-\frac{1}{z} \left\{ \vphantom{\sum_{a}^{b}}\right. 
 -\sum_{l=1}^{n-1}  \sum_{j=0}^{b_l-2} (-1)^{j+{\rm wt}({\bf k}^l)-1} 
\zeta({\bf k}_{l-1},\{1\}^{a_l-1},b_l-j){\rm Li}((j+1,{\bf k}^l_{n-1},\{1\}^{a_n-1})^\vee;z) \nonumber\\
&\quad +\sum_{l=1}^{n-1} (-1)^{b_l+{\rm wt}({\bf k}^l)-1} 
 \sum_{d= 0}^{a_l} 
 \sum_{\substack{{\rm wt}({\bf e}_1)+{\rm wt}({\bf e}_2)+d=a_l \\ {\rm dep}({\bf e}_1)=n_1,\ {\rm dep}({\bf e}_2)=n_2}}
  (-1)^{{\rm wt}({\bf e}_1)} b(({\bf k}_{l-1})^\dagger;{\bf e}_1) \nonumber\\
&\quad\quad\times   
\zeta(({\bf k}_{l-1})^\dagger+{\bf e}_1) b((b_l,{\bf k}^l_{n-1},\{1\}^{a_n-1})^\vee;{\bf e}_2){\rm Li}(\{1\}^{d};1-z)
  {\rm Li}((b_l,{\bf k}^l_{n-1},\{1\}^{a_n-1})^\vee+{\bf e}_2;z) \nonumber\\
&\quad + \sum_{d= 0}^{a_n-1} 
 \sum_{\substack{{\rm wt}({\bf e}_1)+d=a_n-1 \\ {\rm dep}({\bf e}_1)=n_1}}
  (-1)^{{\rm wt}({\bf e}_1)} b(({\bf k}_{n-1})^\dagger;{\bf e}_1)    
\zeta(({\bf k}_{n-1})^\dagger+{\bf e}_1) {\rm Li}(\{1\}^{d};1-z)  \left.\vphantom{\sum_{d}^{a}}\right\} \nonumber
\end{align}
\begin{align}
&=-\sum_{l=1}^{n-1}  \sum_{j=0}^{b_l-2} (-1)^{j+{\rm wt}({\bf k}^l)} 
\zeta({\bf k}_{l-1},\{1\}^{a_l-1},b_l-j)\frac{d}{dz}{\rm Li}({(j+1,{\bf k}^l)^\vee};z) \nonumber\\
&\quad +\frac{1}{z} \sum_{l=1}^{n-1} (-1)^{b_l+{\rm wt}({\bf k}^l)} 
 \sum_{d= 0}^{a_l} 
 \sum_{\substack{{\rm wt}({\bf e}_1)+{\rm wt}({\bf e}_2)+d=a_l \\ {\rm dep}({\bf e}_1)=n_1,\ {\rm dep}({\bf e}_2)=n_2}}
  (-1)^{{\rm wt}({\bf e}_1)} b(({\bf k}_{l-1})^\dagger;{\bf e}_1) \nonumber\\
&\quad\quad\times   
\zeta(({\bf k}_{l-1})^\dagger+{\bf e}_1) b((b_l,{\bf k}^l_{n-1},\{1\}^{a_n-1})^\vee;{\bf e}_2){\rm Li}(\{1\}^{d};1-z)
  {\rm Li}((b_l,{\bf k}^l_{n-1},\{1\}^{a_n-1})^\vee+{\bf e}_2;z) \nonumber\\
\label{thm1proof1}
&\quad +  \sum_{d= 1}^{a_n} 
 \sum_{\substack{{\rm wt}({\bf e}_1)+d=a_n \\ {\rm dep}({\bf e}_1)=n_1}}
  (-1)^{{\rm wt}({\bf e}_1)} b(({\bf k}_{n-1})^\dagger;{\bf e}_1)    
\zeta(({\bf k}_{n-1})^\dagger+{\bf e}_1) \frac{d}{dz} {\rm Li}(\{1\}^{d};1-z).
\end{align}
by using Lemma \ref{hodai2.3} in the first and the last equality. 
In the last equality, we also use the fact 
$$
((j+1,{\bf k}_{n-1}^l,\{1\}^{a_n-1})^\vee)_+
=(j+1,{\bf k}_{n-1}^l,\{1\}^{a_n}) 
=(j+1,{\bf k}^l).
$$
We compute the second term of \eqref{thm1proof1}. 
(We recall that $n_1={\rm dep}(({\bf k}_{l-1})^\dagger)$ and $n_2={\rm dep}((b_l,{\bf k}^l)^\vee)$. ) 
Denoting the last component of ${\bf e}_2$ by $e_2$ and considering two cases when $e_2=0$ and $e_2\geq 1$, we find that
\begin{align*}
& \sum_{d= 0}^{a_l} 
 \sum_{\substack{{\rm wt}({\bf e}_1)+{\rm wt}({\bf e}_2)+d=a_l \\ {\rm dep}({\bf e}_1)=n_1,\ {\rm dep}({\bf e}_2)=n_2}}
  (-1)^{{\rm wt}({\bf e}_1)} b(({\bf k}_{l-1})^\dagger;{\bf e}_1) \zeta(({\bf k}_{l-1})^\dagger+{\bf e}_1)\\
&\quad\times   
 b((b_l,{\bf k}^l_{n-1},\{1\}^{a_n-1})^\vee;{\bf e}_2){\rm Li}(\{1\}^{d};1-z)
  {\rm Li}((b_l,{\bf k}^l_{n-1},\{1\}^{a_n-1})^\vee+{\bf e}_2;z) \\
&= 
 \sum_{d= 0}^{a_l} 
 \sum_{\substack{{\rm wt}({\bf e}_1)+{\rm wt}({\bf e}_2)+d=a_l \\ {\rm dep}({\bf e}_1)=n_1,\ {\rm dep}({\bf e}_2)=n_2 \\ e_2=0}}
  (-1)^{{\rm wt}({\bf e}_1)} b(({\bf k}_{l-1})^\dagger;{\bf e}_1) \zeta(({\bf k}_{l-1})^\dagger+{\bf e}_1) \\
&\quad\quad\times   
b((b_l,{\bf k}^l)^\vee;{\bf e}_2){\rm Li}({\{1\}^{d}};1-z)
  {\rm Li}({(b_l,{\bf k}^l_{n-1},\{1\}^{a_n-1})^\vee+{\bf e}_2};z) \\
&\quad + 
 \sum_{d= 0}^{a_l} 
 \sum_{\substack{{\rm wt}({\bf e}_1)+{\rm wt}({\bf e}_2)+d+1=a_l \\ {\rm dep}({\bf e}_1)=n_1,\ {\rm dep}({\bf e}_2)=n_2}}
  (-1)^{{\rm wt}({\bf e}_1)} b(({\bf k}_{l-1})^\dagger;{\bf e}_1) \zeta(({\bf k}_{l-1})^\dagger+{\bf e}_1) \\
&\quad\quad\times   
( b((b_l,{\bf k}^l)^\vee;({\bf e}_2)_+) - b((b_l,{\bf k}^l)^\vee;{\bf e}_2)){\rm Li}({\{1\}^{d}};1-z)
  {\rm Li}({(b_l,{\bf k}^l_{n-1},\{1\}^{a_n-1})^\vee+({\bf e}_2)_+};z) \\
&= 
 \sum_{d= 0}^{a_l} 
 \sum_{\substack{{\rm wt}({\bf e}_1)+{\rm wt}({\bf e}_2)+d=a_l \\ {\rm dep}({\bf e}_1)=n_1,\ {\rm dep}({\bf e}_2)=n_2}}
  (-1)^{{\rm wt}({\bf e}_1)} b(({\bf k}_{l-1})^\dagger;{\bf e}_1) \zeta(({\bf k}_{l-1})^\dagger+{\bf e}_1) \\
&\quad\quad\times   
b((b_l,{\bf k}^l)^\vee;{\bf e}_2){\rm Li}({\{1\}^{d}};1-z)
  {\rm Li}({(b_l,{\bf k}^l_{n-1},\{1\}^{a_n-1})^\vee+{\bf e}_2};z) \\
&\quad - 
 \sum_{d= 1}^{a_l+1} 
 \sum_{\substack{{\rm wt}({\bf e}_1)+{\rm wt}({\bf e}_2)+d=a_l \\ {\rm dep}({\bf e}_1)=n_1,\ {\rm dep}({\bf e}_2)=n_2}}
  (-1)^{{\rm wt}({\bf e}_1)} b(({\bf k}_{l-1})^\dagger;{\bf e}_1) \zeta(({\bf k}_{l-1})^\dagger+{\bf e}_1) \\
&\quad\quad\times   
b((b_l,{\bf k}^l)^\vee;{\bf e}_2){\rm Li}({\{1\}^{d-1}};1-z)
  {\rm Li}({(b_l,{\bf k}^l)^\vee+{\bf e}_2};z).
  \end{align*}
Since 
\begin{align*}
&\frac{d}{dz}\left\{{\rm Li}({\{1\}^{d}};1-z) {\rm Li}((b_l,{\bf k}^l)^\vee+{\bf e}_2;z) \right\}\\
&=\frac{1}{z}\left\{{\rm Li}({\{1\}^{d}};1-z) {\rm Li}((b_l,{\bf k}^l_{n-1},\{1\}^{a_n-1})^\vee+{\bf e}_2;z)
+{\rm Li}({\{1\}^{d-1}};1-z) {\rm Li}((b_l,{\bf k}^l)^\vee+{\bf e}_2;z)\right\}, 
\end{align*}
the second term of \eqref{thm1proof1} coincides with 
  \begin{align*}
&
\sum_{l=1}^{n-1} (-1)^{b_l+{\rm wt}({\bf k}^l)} 
 \sum_{d= 0}^{a_l} 
 \sum_{\substack{{\rm wt}({\bf e}_1)+{\rm wt}({\bf e}_2)+d=a_l \\ {\rm dep}({\bf e}_1)=n_1,\ {\rm dep}({\bf e}_2)=n_2}}
  (-1)^{{\rm wt}({\bf e}_1)} b(({\bf k}_{l-1})^\dagger;{\bf e}_1) \zeta(({\bf k}_{l-1})^\dagger+{\bf e}_1) \\
&\times   
b((b_l,{\bf k}^l)^\vee;{\bf e}_2)\frac{d}{dz}\left\{{\rm Li}({\{1\}^{d}};1-z)
  {\rm Li}((b_l,{\bf k}^l)^\vee+{\bf e}_2;z) \right\}.    
\end{align*}
We therefore conclude
\begin{align*}
&{\rm Li}({\bf k};1-z) \\
&= -\sum_{l=1}^{n-1}  \sum_{j=0}^{b_l-2} (-1)^{j+{\rm wt}({\bf k}^l)} 
\zeta({\bf k}_{l-1},\{1\}^{a_l-1},b_l-j){\rm Li}({(j+1,{\bf k}^l)^\vee};z) \\
&\quad+
\sum_{l=1}^{n-1} (-1)^{b_l+{\rm wt}({\bf k}^l)} 
 \sum_{d= 0}^{a_l} 
 \sum_{\substack{{\rm wt}({\bf e}_1)+{\rm wt}({\bf e}_2)+d=a_l \\ {\rm dep}({\bf e}_1)=n_1,\ {\rm dep}({\bf e}_2)=n_2}}
  (-1)^{{\rm wt}({\bf e}_1)} b(({\bf k}_{l-1})^\dagger;{\bf e}_1) \zeta(({\bf k}_{l-1})^\dagger+{\bf e}_1) \\
&\quad\quad\times   
b((b_l,{\bf k}^l)^\vee;{\bf e}_2){\rm Li}({\{1\}^{d}};1-z)
  {\rm Li}((b_l,{\bf k}^l)^\vee+{\bf e}_2;z)    \\
&\quad + \sum_{d= 1}^{a_n} 
 \sum_{\substack{{\rm wt}({\bf e}_1)+d=a_n \\ {\rm dep}({\bf e}_1)=n_1}}
  (-1)^{{\rm wt}({\bf e}_1)} b(({\bf k}_{n-1})^\dagger;{\bf e}_1)    
\zeta(({\bf k}_{n-1})^\dagger+{\bf e}_1) {\rm Li}(\{1\}^d;1-z)  +C
\end{align*}
for some constant $C$. 
We find 
\begin{align*}
C=\sum_{\substack{{\rm wt}({\bf e}_1)=a_n \\ {\rm dep}({\bf e}_1)=n_1 }} (-1)^{{\rm wt}({\bf e}_1)}
b(({\bf k}_{n-1})^\dagger;{\bf e}_1) \zeta(({\bf k}_{n-1})^\dagger+{\bf e}_1)
\end{align*}
by setting $z\rightarrow0$ and applying Lemma \ref{hodai_lim}.  
This completes the proof.  
\end{proof}


\begin{proof}[Proof of Theorem \ref{mainThm1}]
By setting $z=e^{-t}$ in Theorem \ref{mainThm2} and using ${\rm Li}(\{1\}^d;1-e^t)={t^d}/{d!}$ ({\cite[Lemma 1 (ii)]{ak_fcn}}), 
we have
\begin{align*}
& {\rm Li}({\bf k};1-e^{-t}) \\
%
&=(1-\delta_{0,b_n})\zeta({\bf k})
 -\sum_{l=1}^{n}  \sum_{j=0}^{b_l-2} (-1)^{j+{\rm wt}({\bf k}^l)} 
\zeta({\bf k}_{l-1},\{1\}^{a_l-1},b_l-j){\rm Li}((j+1,{\bf k}^l)^\vee;e^{-t}) \\
&\quad +\sum_{l=1}^{n} (-1)^{b_l+{\rm wt}({\bf k}^l)} 
 \sum_{d= 0}^{a_l} 
 \sum_{\substack{{\rm wt}({\bf e}_1)+{\rm wt}({\bf e}_2)+d=a_l \\ {\rm dep}({\bf e}_1)=n_1,\ {\rm dep}({\bf e}_2)=n_2}}
  (-1)^{{\rm wt}({\bf e}_1)} b(({\bf k}_{l-1})^\dagger;{\bf e}_1) \\
&\quad\quad\times   
\zeta(({\bf k}_{l-1})^\dagger+{\bf e}_1) b((b_l,{\bf k}^l)^\vee;{\bf e}_2)\frac{t^d}{d!}
  {\rm Li}((b_l,{\bf k}^l)^\vee+{\bf e}_2;e^{-t}).
\end{align*}
Substituting this into the definition \eqref{intrep_ak} of  $\xi({\bf k};s)$ and using the one \eqref{intrep_ez} of $\zeta({\bf k};s)$, we obtain the theorem. 
\end{proof}

\section{Proofs of Corollary \ref{mainCor}}
We give two proofs of Corollary \ref{mainCor}. 
One is to apply Theorem \ref{mainThm2} and the other is to use $2$-poset. 

\subsection{An application of Theorem \ref{mainThm1}}

\begin{proof}[Proof of Corollary \ref{mainCor}]
Substituting $s= m+1$, we have
\begin{align*}
&\xi({\bf k};m+1) \\
&=(1-\delta_{0,b_n})\zeta({\bf k})\zeta(m+1) \\
&\quad -\sum_{l=1}^{n} \sum_{j=0}^{b_l-2} (-1)^{j+{\rm wt}({\bf k}^l)} 
\zeta({\bf k}_{l-1},\{1\}^{a_l-1},b_l-j) \zeta((j+1,{\bf k}^l)^\vee,m+1) \\
&\quad +\sum_{l=1}^{n} (-1)^{b_{l}+{\rm wt}({\bf k}^{l})}
 \sum_{d= 0}^{a_{l}}
 \sum_{\substack{{\rm wt}({\bf e}_1)+{\rm wt}({\bf e}_2)+d=a_{l} \\ {\rm dep}({\bf e}_1)=n_1,\ {\rm dep}({\bf e}_2)=n_2}} (-1)^{{\rm wt}({\bf e}_1)} b(({\bf k}_{l-1})^\dagger;{\bf e}_1) \zeta(({\bf k}_{l-1})^\dagger+{\bf e}_1)\\
&\qquad\times    b((b_{l},{\bf k}^{l})^\vee;{\bf e}_2)  \binom{m+d}{d} \zeta((b_{l},{\bf k}^{l})^\vee+{\bf e}_2,m+d+1). 
\end{align*}
Applying Lemma \ref{kt_fcn} to the third term of the right hand side, we see that
\begin{align*}
&\sum_{d= 0}^{a_{l}}
 \sum_{\substack{{\rm wt}({\bf e}_1)+{\rm wt}({\bf e}_2)+d=a_{l} \\ {\rm dep}({\bf e}_1)=n_1,\ {\rm dep}({\bf e}_2)=n_2}} (-1)^{{\rm wt}({\bf e}_1)} b(({\bf k}_{l-1})^\dagger;{\bf e}_1) \zeta(({\bf k}_{l-1})^\dagger+{\bf e}_1)\\
&\quad\times    b((b_{l},{\bf k}^{l})^\vee;{\bf e}_2)  \binom{m+d}{d} \zeta((b_{l},{\bf k}^{l})^\vee+{\bf e}_2,m+d+1)\\
&=  \sum_{\substack{{\rm wt}({\bf e}_1)+{\rm wt}({\bf e}_2)=a_{l} \\ {\rm dep}({\bf e}_1)=n_1,\ {\rm dep}({\bf e}_2)=n_2+1}} (-1)^{{\rm wt}({\bf e}_1)} b(({\bf k}_{l-1})^\dagger;{\bf e}_1) \zeta(({\bf k}_{l-1})^\dagger+{\bf e}_1) \\
&\quad\times   
 b(((b_{l},{\bf k}^{l})^\vee,m+1);{\bf e}_2)   \zeta(((b_{l},{\bf k}^{l})^\vee,m+1)+{\bf e}_2) \\
&= \sum_{j=0}^{a_{l}} \sum_{\substack{{\rm wt}({\bf e}_1)=j \\ {\rm dep}({\bf e}_1)=n_1}}
  (-1)^{{\rm wt}({\bf e}_1)} b(({\bf k}_{l-1})^\dagger;{\bf e}_1) \zeta(({\bf k}_{l-1})^\dagger+{\bf e}_1) \xi (((b_{l},{\bf k}^{l})^\vee,m+1)^\dagger)_-;a_{l}-j+1) \\
&= \sum_{j=0}^{a_{l}} (-1)^j \xi(({\bf k}_{l-1})_-;j+1) 
 \xi (\{1\}^{m-1},\overleftarrow{(b_{l},{\bf k}^{l})_+};a_{l}-j+1).
 \end{align*}
Hence we obtain
\begin{align*}
&\xi({\bf k};m+1) \\
&=(1-\delta_{0,b_n})\zeta({\bf k})\zeta(m+1)
 -\sum_{l=1}^{n} \sum_{j=0}^{b_l-2} (-1)^{j+{\rm wt}({\bf k}^l)} 
\zeta({\bf k}_{l-1},\{1\}^{a_l-1},b_l-j) \zeta((j+1,{\bf k}^l)^\vee,m+1) \\
&\quad +\sum_{l=1}^{n} (-1)^{b_{l}+{\rm wt}({\bf k}^{l})} 
\sum_{j=0}^{a_{l}} (-1)^j \xi(({\bf k}_{l-1})_-;j+1)  \xi (\{1\}^{m-1},\overleftarrow{(b_{l},{\bf k}^{l})_+};a_{l}-j+1).
\end{align*}
\end{proof}

\subsection{Combinatorial proof of Corollary \ref{mainCor}}

We can give combinatorial proof of Corollary \ref{mainCor} by using $2$-poset and their associated integrals. 
Let $a_1,b_1,\ldots,a_{n-1},b_{n-1},a_n \geq1,\ b_n \geq 0$. 
For a positive index ${\bf k}=(\{1\}^{a_1-1},b_1+1,\ldots,\{1\}^{a_n-1},b_n+1)$, we write 
$\begin{xy}
(	0	,	- 3.500	)	*+{}	                                                            *{\odot};
(	7	,	    3.500	)	*+[F]{\overleftarrow{\bf k}}	                                                    ;
(	0.900	,	-  2.600	);(	4.000	,	 0.500	)**\dir{-},
\end{xy} $ 
for the diagram
$$
\begin{xy}
(8,-22)			*+	{}						*{\circ}	="a1_l";
(16,-14)			*+	{}						*{\circ}	="a1_r";
{(8,-20.8) \ar @/^2mm/ @{-}^(0.6){b_{n}} (14.8,-13.8)}, 
(20,-10)			*+	{}						*{\bullet}	="b1_l";
(28,-2)			*+	{}						*{\bullet}	="b1_r";
{(20,-8.8) \ar @/^2mm/ @{-}^(0.6){a_{n}} (26.8,-1.8)}, 
(40,10)			*+	{}						*{\circ}	="as_l";
(48,18)			*+	{}						*{\circ}	="as_r";
{(40,11.2) \ar @/^2mm/ @{-}^(0.6){b_{1}} (46.8,18.2)},
(52,22)			*+	{}						*{\bullet}	="bs_l";
(60,30)			*+	{}						*{\bullet}	="bs_r";
{(52,23.2) \ar @/^2mm/ @{-}^(0.6){a_{1}} (58.8,30.2)},
(10.08,-19.92);(14.08,-15.92) **\dir{.},
(16.58,-13.42);(19.58,-10.42) **\dir{-},
(22.08,-7.92);(26.08,-3.92) **\dir{.},
(30.08,0.08);(38.08,8.08) **\dir{.},
(42.08,12.08);(46.08,16.08) **\dir{.}, 
(48.58,18.58);(51.58,21.58) **\dir{-},
(54.08,24.08);(58.08,28.08) **\dir{.},
\end{xy}
$$
obtained by turning 
$\begin{xy}
(0,-2.8)			*+	{}						*{\bullet};
(5.6,2.8)			*++	{\bf k}						*\frm{o};
(	0.5800	,	-2.220	);(	3.580	,	0.780	)**\dir{-},
\end{xy}
$ upside down. 


\begin{hodai}\label{idou}
Let ${\bf k}, {\bf l}$ be positive indices. 
\begin{description}
\item[(1)] For any non-negative integer $b$, we have
$$
W \left(~
\begin{xy}
(	0	,	- 14.00	)	*+{}	                                                            *{\bullet};
(	0	,	-  8.00	)	*++{\bf k}	                                                    *\frm{o};
(	0	,	-  2.00	)	*+{}	                                                            *{\circ}      ="b1";	
(	0	,         6.00	)	*+{}						                 *{\circ}	="bb";
(	8	,	 14.00  	)	        *++{\bf l}	                                                  *\frm{o}     ="bfl";	
(	8	,        8.00	)	*+{}	*{\bullet};
(	0.000	,	-  13.400	);(	0.000	,	- 10.800	)**\dir{-},
(	0.000	,	-  5.100	);(	0.000	,	 - 2.500	)**\dir{-},
(	0.000	,	- 0.500	);(	0.000	,	 4.500	)**\dir{.},
(	0.500	,	6.500	);(	6.300	,	12.300	)**\dir{-},
(	8.0000	,	  8.600	);(	8.000	,	 11.600	)**\dir{-},
{	"b1"	\ar@/	^	2	mm	/@{-}	^	(	0.5	){	b	}	"bb"	},
\end{xy}
~\right)
=
\sum_{j=0}^{b-1}
(-1)^j
W
\left(~
\begin{xy}
(	0	,	- 10.00	)	*+{}	                                                            *{\bullet};
(	0	,	-  4.00	)	*++{\bf k}	                                                    *\frm{o};
(	0	,	  2.00	)	*+{}	                                                            *{\circ}      ="b1";	
(	0	,         10.00	)	*+{}						                 *{\circ}	="bb";
(	0.000	,	-  9.500	);(	0.000	,	- 6.800	)**\dir{-},
(	0.000	,	-  1.200	);(	0.000	,	 1.500	)**\dir{-},
(	0.000	,	3.500	);(	0.000	,	 8.500	)**\dir{.},
{	"b1"	\ar@/	^	2	mm	/@{-}	^	(	0.5	){	b-j	}	"bb"	},
\end{xy}
~\right)
W \left(~
\begin{xy}
(	0	,	- 10.00	)	*+{}	                                                            *{\bullet};
(	0	,	-  4.00	)	*++{\bf l}	                                                    *\frm{o};
(	0	,	  2.00	)	*+{}	                                                            *{\circ}      ="b1";	
(	0	,         10.00	)	*+{}						                 *{\circ}	="bb";
(	0.000	,	-  9.500	);(	0.000	,	- 6.400	)**\dir{-},
(	0.000	,	-  1.500	);(	0.000	,	 1.500	)**\dir{-},
(	0.000	,	3.500	);(	0.000	,	 8.500	)**\dir{.},
{	"b1"	\ar@/	^	2	mm	/@{-}	^	(	0.5	){	j	}	"bb"	},
\end{xy}
~\right)
+(-1)^b
W
\left(~
\begin{xy}
(	0	,	   -4.00	)	*+{}	*{\bullet};
(	0	,	 2.00  	)	        *++{\bf k}	                                                  *\frm{o};	
(	8.000	,	- 10.00	)	*+{}	                                                            *{\bullet};
(	8.000	,	-  4.00	)	*++{\bf l}	                                                    *\frm{o};
(	8.000	,	   2.00	)	*+{}	                                                            *{\circ}      ="b1";	
(	8.000	,         10.00	)	*+{}						                 *{\circ}	="bb";
(	0.000	,	  -3.200	);(	0.000	,	 -0.800	)**\dir{-},
(	2.10	,	4.100	);(	7.50	,	9.50	)**\dir{-},
(	8.000	,	-  9.300	);(	8.000	,	- 6.500	)**\dir{-},
(	8.000	,	-  1.400	);(	8.000	,	  1.400	)**\dir{-},
(	8.000	,	3.500	);(	8.000	,	 8.500	)**\dir{.},
{	"b1"	\ar@/	_	2.5	mm	/@{-}	_	(	0.5	){	b	}	"bb"	},
\end{xy}
~\right).
$$
\item[(2)] For non-negative integer $a$, we have
$$
W\left(~
\begin{xy}
(	0	,	- 14.00	)	*+{}	                                                            *{\bullet};
(	0	,	-  8.00	)	*++{\bf k}	                                                    *\frm{o};
(	0	,	-  2.00	)	*+{}	                                                            *{\bullet}      ="a1";	
(	0	,         6.00	)	*+{}						                 *{\bullet}	="aa";
(	8	,	 14.00  	)	        *++{\bf l}	                                                  *\frm{o}     ="bfl";	
(	8	,        8.00	)	*+{}	*{\bullet};
(	0.000	,	-  13.400	);(	0.000	,	- 10.800	)**\dir{-},
(	0.000	,	-  5.100	);(	0.000	,	 - 2.500	)**\dir{-},
(	0.000	,	- 0.500	);(	0.000	,	 4.500	)**\dir{.},
(	0.500	,	6.500	);(	6.300	,	12.300	)**\dir{-},
(	8.0000	,	  8.600	);(	8.000	,	 11.600	)**\dir{-},
{	"a1"	\ar@/	^	2	mm	/@{-}	^	(	0.5	){	a	}	"aa"	},
\end{xy}
~\right)
=
\sum_{j=0}^{a}
(-1)^j
W
 \left(~
\begin{xy}
(	0	,	2.00	)	*+{}	                                                            *{\bullet};
(	0	,	  8.00	)	*++{\bf k}	                                                    *\frm{o};
(	8.000	,        0.000	)	*+{}	                                                            *{\bullet}      ="h1";	
(	8.000	,       -  8.000	)	*+{}						                 *{\bullet}	="hh";
(	0.000	,        2.800	);(	0.000	,	 5.200	)**\dir{-},
(	2.000	,	6.000	);(	8.000	,	-0.000	)**\dir{-},
(	8.000	,	-6.500	);(	8.000	,	-1.500	)**\dir{.},
{	"h1"	\ar@/	_	2.5	mm	/@{-}	_	(	0.5	){	j	}	"hh"	},
\end{xy}
~\right)
W \left(~  
\begin{xy}
(	0.000	,        0.000	)	*+{}	                                                            *{\bullet}      ="h1";	
(	0.000	,       -  8.000	)	*+{}						                 *{\bullet}	="hh";
(	8.000	,	2.00	)	*+{}	                                                            *{\bullet};
(	8.000	,	  8.00	)	*++{\bf l}	                                                    *\frm{o};
(	0.000	,	-7.200	);(	0.000	,	-1.500	)**\dir{.},
(	0.600	,        0.600	);(	6.200	,	 6.200	)**\dir{-},
(	8.000	,	2.600	);(	8.000	,	5.600	)**\dir{-},
{	"h1"	\ar@/	_	2.5	mm	/@{-}	_	(	0.5	){	a-j	}	"hh"	},
\end{xy}
~\right)
+(-1)^{a+1}
W
\left(~
\begin{xy}
(	0	,	   8.000	)	*+{}	*{\bullet};
(	0	,	 14.000  	)	        *++{\bf k}	                                                  *\frm{o};
(	8.000	,	- 14.00	)	*+{}	                                                            *{\bullet};
(	8.000	,	-  8.00	)	*++{\bf l}	                                                    *\frm{o};
(	8.000	,	 -2.000	)	*+{}	                                                            *{\bullet}      ="b1";	
(	8.000	,         6.000	)	*+{}						                 *{\bullet}	="bb";
(	0.000	,	  8.600	);(	0.000	,	 11.200	)**\dir{-},
(	2.00	,	12.000	);(	8.000	,	6.000	)**\dir{-},
(	8.000	,	-  13.300	);(	8.000	,	- 10.400	)**\dir{-},
(	8.000	,	-  5.400	);(	8.000	,	- 2.500	)**\dir{-},
(	8.000	,	-0.500	);(	8.000	,	 4.500	)**\dir{.},
{	"b1"	\ar@/	_	2.5	mm	/@{-}	_	(	0.5	){	a	}	"bb"	},
\end{xy}
~\right).
$$
\end{description}
\end{hodai}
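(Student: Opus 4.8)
The plan is to prove both identities purely formally, using only the three properties of the $\mathbb{Q}$-algebra homomorphism $W\colon\mathfrak{P}\to\mathfrak{H}_{\sh}$ recalled above: that $W$ carries a totally ordered $2$-poset to the word obtained by concatenating the letters $e_{\delta(x)}$ along the chain, that $W(X\amalg Y)=W(X)\,W(Y)$, and, above all, the merging relation $W(X)=W(X_a^b)+W(X_b^a)$ of \eqref{xab}, valid for any two non-comparable elements $a,b$ of $X$. Iterating \eqref{xab} also expresses $W(X)$ as the sum, over the linear extensions of $X$, of the corresponding words, and I will pass between these two descriptions as convenient. I intend to prove part (1) by induction on $b$ and part (2) by induction on $a$, the base case in each instance --- where the block of intermediate vertices has minimal size --- being checked by a direct expansion.

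For the inductive step of part (1) I work from the right-hand side. There each product $W(\cdot)\,W(\cdot)$ equals $W$ of a disjoint union of two totally ordered chains, one of shape $[{\bf k}]$ followed by $b-j$ vertices labelled $0$, the other of shape $[{\bf l}]$ followed by $j$ such vertices; to such disjoint unions I apply \eqref{xab} repeatedly, each time adjoining a relation that pushes the top $0$-vertex of the first chain below the next vertex of the second. At the first step one of the two $2$-posets produced is, after one more application of \eqref{xab}, of exactly the shape appearing on the left-hand side but with one fewer intermediate vertex, so the induction hypothesis takes care of it; once the signs $(-1)^j$ and the reindexing $j\mapsto j+1$ are accounted for, the remaining contributions organize into a telescoping sum in which all the intermediate $2$-posets cancel in pairs, leaving precisely $W$ of the left-hand $2$-poset together with the lone boundary term $(-1)^bW(\cdot)$ --- the $2$-poset there being the one in which $[{\bf k}]$ is hung just below the topmost $0$-vertex.

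Part (2) runs along the same lines, except that the block now consists of $a$ vertices labelled $1$ rather than $0$; because the vertex being peeled off carries the label $1$, the ``absorbed'' and ``pushed-down'' branches exchange roles, and this is precisely why the summation in part (2) runs up to $j=a$ instead of to $b-1$ and the final correction term carries the sign $(-1)^{a+1}$ rather than $(-1)^b$. I expect the real work --- and the place where it is easiest to go wrong --- to be the bookkeeping of signs and indices in the telescoping: one has to be precise about which order relation is adjoined at each use of \eqref{xab}, to handle the degenerate summands at the two ends of each range (an empty block of intermediate vertices, and the case ${\bf l}=(1)$ in which the ${\bf l}$-box is empty), and to keep in mind that the auxiliary $2$-posets produced en route are only semi-admissible, so that one should work with $W$ valued in $\mathfrak{H}_{\sh}$ throughout and not apply $\zeta$ or $\mathrm{Li}(\,\cdot\,;z)$ prematurely.
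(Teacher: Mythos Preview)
Your proposal is correct and is essentially the same approach as the paper's. The paper's own proof is a single sentence pointing back to the argument for \eqref{hodai_lim1}: there one defines auxiliary $2$-posets $w_d$, observes via a single application of \eqref{xab} that each product $W(\cdot)\,W(\cdot)$ on the right splits as $w_d+w_{d+1}$, and then sums with alternating signs so that everything telescopes. Your inductive organisation is just this telescope unrolled one step at a time; the only simplification the paper's presentation offers is that no induction hypothesis is invoked---one writes down the whole alternating sum at once and cancels---so you might streamline your write-up accordingly.
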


\begin{proof}
The lemma can be shown in a similar way as \eqref{hodai_lim1}. 
\end{proof}

\begin{proof}[Combinatorial proof of Corollary \ref{mainCor}]
Let ${\bf k}=(\{1\}^{a_1-1},b_1+1,\ldots,\{1\}^{a_n-1},b_n+1)$. 
Using Lemma \ref{idou} repeatedly, we find that 
\begin{align*}
& W \left(~
\begin{xy}
(	0	,	   -6.00	)	*+{}	*{\bullet};
(	0	,	 0.00  	)	        *++{\bf k}	                                                  *\frm{o};	
(	8	,	 8.00	)	*+{}                                                        	*{\circ};
(	16.000	,	-  8.00	)	*+{}	                                                            *{\bullet}      ="m1";	
(	16.000	,         0.00	)	*+{}						                 *{\bullet}	="mm";
(	0.000	,	  -5.200	);(	0.000	,	 -2.800	)**\dir{-},
(	2.10	,	2.100	);(	7.50	,	7.50	)**\dir{-},
(	8.500	,	7.50	);(	15.450	,	0.750	)**\dir{-},
(	16.000	,	-6.500	);(	16.000	,	 -1.500	)**\dir{.},
{	"m1"	\ar@/	_	2.5	mm	/@{-}	_	(	0.5	){	m	}	"mm"	},
\end{xy}
~\right) \\
&\quad=
(1-\delta_{0,b_n})W \left(~
\begin{xy}
(	0	,	- 3.00	)	*+{}	                                                            *{\bullet};
(	0	,	  3.00	)	*++{\bf k}	                                                    *\frm{o};
(	0.000	,	-  2.500	);(	0.000	,	 0.320	)**\dir{-},
\end{xy}
~\right)
I \left(~
\begin{xy}
(	0.000	,	-  6.00	)	*+{}	                                                            *{\bullet}      ="m1";	
(	0.000	,          2.00	)	*+{}						                 *{\bullet}	="mm";
(	0	,	  6.00	)	*+{}	                                                            *{\circ}      ="b1";	
(	0.000	,	-4.500	);(	0.000	,	 0.500	)**\dir{.},
(	0.000	,	   2.500	);(	0.000	,	 5.500	)**\dir{-},
{	"m1"	\ar@/	^	2.5	mm	/@{-}	^	(	0.5	){	m	}	"mm"	},
\end{xy}
~\right) \\
&\quad-\sum_{l=0}^{h-1} (-1)^{{\rm wt}({\bf k}^{n-l})}
\sum_{j=0}^{b_{n-l}-2}(-1)^j
W \left(~
\begin{xy}
(	0	,	- 20.00	)	*+{}	                                                            *{\bullet};
(	0	,	- 10.00	)	*++{{\bf k}_{n-l-1}}	                                                    *\frm{o};
(	0	,	   0.00	)	*+{}	                                                            *{\bullet}      ="an1";	
(	0	,          8.00	)	*+{}						                 *{\bullet}	="anan";
(	0	,	  12.00	)	*+{}	                                                            *{\circ}      ="b1";	
(	0	,         20.00	)	*+{}						                 *{\circ}	="bb";
(	0.000	,	-  19.500	);(	0.000	,	- 16.800	)**\dir{-},
(	0.000	,	-  3.000	);(	0.000	,	 0.000	)**\dir{-},
(	0.000	,	  1.500	);(	0.000	,	 6.500	)**\dir{.},
(	0.000	,	   8.500	);(	0.000	,	 11.500	)**\dir{-},
(	0.000	,	13.500	);(	0.000	,	 18.500	)**\dir{.},
{	"an1"	\ar@/	^	2	mm	/@{-}	^	(	0.5	){	a_{n-l}	}	"anan"	},
{	"b1"	\ar@/	^	2	mm	/@{-}	^	(	0.5	){	b_{n-l}-j-1	}	"bb"	},
\end{xy}
~\right)
W \left(~
\begin{xy}
(	0.000	,	-  20.00	)	*+{}	                                                            *{\bullet}      ="m1";	
(	0.000	,       -  12.00	)	*+{}						                 *{\bullet}	="mm";
(	0	,	-   8.00	)	*+{}	                                                            *{\circ};
(	0	,	-   4.00	)	*+{}	                                                            *{\odot};
(	0	,	    4.00	)	*+[F]{\overleftarrow{{\bf k}^{n-l}}}	                                                  ;
(	0	,	  12.00	)	*+{}	                                                            *{\circ}      ="b1";	
(	0	,         20.00	)	*+{}						                 *{\circ}	="bb";
(	0.000	,	-18.500	);(	0.000	,	 -13.500	)**\dir{.},
(	0.000	,	-  11.500	);(	0.000	,	 -8.600	)**\dir{-},
(	0.000	,	-  7.300	);(	0.000	,	 -5.200	)**\dir{-},
(	0.000	,	-  2.800	);(	0.000	,	  0.500	)**\dir{-},
(	0.000	,	  7.500	);(	0.000	,	 11.500	)**\dir{-},
(	0.000	,	13.500	);(	0.000	,	 18.500	)**\dir{.},
{	"b1"	\ar@/	^	2	mm	/@{-}	^	(	0.5	){	j+1	}	"bb"	},
{	"m1"	\ar@/	^	2.5	mm	/@{-}	^	(	0.5	){	m	}	"mm"	},
\end{xy}
~\right) 
\end{align*}
\begin{align*}
&\quad+\sum_{l=1}^{h-1} (-1)^{b_{n-l+1}+{\rm wt}({\bf k}^{n-l+1})}
\sum_{j=0}^{a_{n-l+1}} (-1)^j
W \left(~
\begin{xy}
(	0	,	0.00	)	*+{}	                                                            *{\bullet};
(	0	,	  8.00	)	*+{{\bf k}_{n-l}}	                                                    *\frm{o};
(	8.000	,        0.000	)	*+{}	                                                            *{\bullet}      ="h1";	
(	8.000	,       -  8.000	)	*+{}						                 *{\bullet}	="hh";
(	0.000	,        0.800	);(	0.000	,	 3.800	)**\dir{-},
(	3.000	,	5.000	);(	8.000	,	-0.000	)**\dir{-},
(	8.000	,	-6.500	);(	8.000	,	-1.500	)**\dir{.},
{	"h1"	\ar@/	_	2.5	mm	/@{-}	_	(	0.5	){	j	}	"hh"	},
\end{xy}
~\right)
W \left(~  
\begin{xy}
(	0	,	  8.00	)	*+{}	                                                            *{\bullet}      ="a1";	
(	0	,         16.000	)	*+{}						                 *{\bullet}	="aa";
(	8	,	 24.000	)	*+{}                                                        	*{\circ};
(	16.0	,	 - 24.00	)	*+{}	*{\bullet}    ="m1";
(	16.0	,	 - 16.00	)	*+{}	*{\bullet}    ="mm";
(	16.0	,	 - 12.00	)	*+{}	*{\circ} ;
(	16.0	,	 - 8.00	)	*+{}	*{\odot} ;
(	16.0	,	   0.000  	)       *+[F]{\overleftarrow{{\bf k}^{n-l+1}}}	      ;	
(	16	,	  8.00	)	*+{}	                                                            *{\circ}      ="j1";
(	16	,         16.000	)	*+{}						                 *{\circ}	="jj";
(	0.000	,	9.500	);(	0.000	,	 14.500	)**\dir{.},
(	0.550	,	16.550	);(	7.550	,	23.550	)**\dir{-},
(	8.50	,	23.500	);(	15.40	,	16.60	)**\dir{-},
(	16.000	,	9.500	);(	16.000	,	 14.500	)**\dir{.},
(	16.000	,	  3.400	);(	16.000	,	  7.5000	)**\dir{-},
(	16.000	,	  -6.800	);(	16.000	,	 - 3.5000	)**\dir{-},
(	16.000	,	 - 11.200	);(	16.000	,	-  9.2000	)**\dir{-},
(	16.000	,	 - 16.200	);(	16.000	,	-  12.6000	)**\dir{-},
(	16.000	,        - 22.500	);(	16.000	,	-  17.500	)**\dir{.},
{	"a1"	\ar@/	^	2	mm	/@{-}	^	(	0.5	){	a_{n-l+1}-j	}	"aa"	},
{	"j1"	\ar@/	_	2.5	mm	/@{-}	_	(	0.5	){	b_{n-l+1}-1}	"jj"	},
{	"m1"	\ar@/	_	2.5	mm	/@{-}	_	(	0.5	){	m}	"mm"	},
\end{xy}
~\right) \\
&\quad + (-1)^{b_{n-h+1}+{\rm wt}({\bf k}^{n-h+1})}
W \left(~  
\begin{xy}
(	0	,	  - 8.00	)	*+{}	                                                            *{\bullet} ;	
(	0	,	   0.000  	)       *+{{\bf k}_{n-h}}	         *\frm{o} ;	
(	0	,	  8.00	)	*+{}	                                                            *{\bullet}      ="a1";	
(	0	,         16.000	)	*+{}						                 *{\bullet}	="aa";
(	8	,	 24.000	)	*+{}                                                        	*{\circ};
(	16	,	  8.00	)	*+{}	                                                            *{\circ}      ="j1";
(	16	,         16.000	)	*+{}						                 *{\circ}	="jj";
(	16.0	,	   0.000  	)       *+[F]{\overleftarrow{{\bf k}^{n-h+1}}}	        ;	
(	16.0	,	 - 8.00	)	*+{}	*{\odot} ;
(	16	,	 - 12.000	)	*+{}                                                        	*{\circ};
(	16.0	,	 - 16.00	)	*+{}	*{\bullet}    ="mm";
(	16.0	,	 - 24.00	)	*+{}	*{\bullet}    ="m1";
(	0.000	,	9.500	);(	0.000	,	 14.500	)**\dir{.},
(	0.000	,	- 7.200	);(	0.000	,        -4.8000	)**\dir{-},
(	0.000	,	  5.000	);(	0.000	,        8.4000	)**\dir{-},
(	0.550	,	16.550	);(	7.550	,	23.550	)**\dir{-},
(	8.50	,	23.500	);(	15.40	,	16.60	)**\dir{-},
(	16.000	,	9.500	);(	16.000	,	 14.500	)**\dir{.},
(	16.000	,	  3.400	);(	16.000	,	  7.5000	)**\dir{-},
(	16.000	,	 - 6.800	);(	16.000	,	-  3.500	)**\dir{-},
(	16.000	,	 - 11.200	);(	16.000	,	-  9.2000	)**\dir{-},
(	16.000	,	 - 15.500	);(	16.000	,	-  12.5000	)**\dir{-},
(	16.000	,        - 22.500	);(	16.000	,	-  17.500	)**\dir{.},
{	"a1"	\ar@/	^	2	mm	/@{-}	^	(	0.5	){	a_{n-h+1}	}	"aa"	},
{	"j1"	\ar@/	_	2.5	mm	/@{-}	_	(	0.5	){	b_{n-h+1}-1}	"jj"	},
{	"m1"	\ar@/	_	2.5	mm	/@{-}	_	(	0.5	){	m}	"mm"	},
\end{xy}
~\right)
\end{align*}
for each $1\leq h \leq n$. 
Applying $I$ to this relation for $h=n$, we obtain the desired result.  
\end{proof}

\section{level $2$ version}

In this section, we overview level $2$ versions of multiple zeta functions of Arakawa-Kaneko and Euler-Zagier types, and give analogues Theorem \ref{mainThm1}, \ref{mainThm2} and Corollary \ref{mainCor}.  

\subsection{Definition of $\psi({\bf k};s)$ and $T({\bf k};s)$}

Through this subsection, let ${\bf k}$ be a index. 
Kaneko and Tsumura introduced level $2$ versions of $\xi({\bf k};s)$ and $\zeta({\bf k};s)$, which are denoted by $\psi({\bf k};s)$ and $T({\bf k};s)$, respectively, as follows.
The function $\psi({\bf k};s)$ is given by
$$
\psi({\bf k};s ) =  \psi (k_1,\cdots,k_r;s) := \frac{1}{\Gamma(s)} \int_{0}^{\infty} \frac{
{\rm A}(k_1,\ldots,k_r; \tanh (t/2) )}{\sinh (t)} t^{s-1} dt
$$
for ${\rm{Re}}(s)>0$ ({\cite[Definition 5]{kt_fcn_lv2}}). 
Here, the function ${\rm A}({\bf k};z)$ is defined by
\begin{equation*}
{\rm A}({\bf k};z) =
{\rm A}(k_1,\ldots,k_r;z) := 
2^r \sum_{\substack{0<m_1<\cdots<m_r \\ m_i \equiv i \bmod 2}} \frac{z^{m_r}}{m_1^{k_1}\cdots m_r^{k_r}}
\end{equation*}
for $|z|<1$.
The function $\psi({\bf k};s)$ continues to a holomorphic function on ${\mathbb C}$. 
For ${\rm Re}(s)>0$, we have
$$
T ({\bf k};s ) =
T(k_1,\ldots,k_r;s)
:= 2^r \sum_{\substack{0<m_1<\cdots<m_{r+1} \\ m_i \equiv i \bmod 2}} \frac{1}{m_1^{k_1}\cdots m_r^{k_r}m_{r+1}^s}
$$
and $T(\varnothing;s)=(1-2^{-s})\zeta(s)$. 
For an admissible index ${\bf k}=(k_1,\ldots,k_r)$, the values 
$$T({\bf k})=T ( k_1,\ldots,k_{r-1},k_r ):=T ( k_1,\ldots,k_{r-1};k_r )$$
 are called multiple $T$-values and have duality formula $T({\bf k})=T({\bf k}^\dagger)$. 
The functions $T({\bf k};s)$ has an integral representation as
$$
T(k_1,\ldots,k_{r-1};s ) =  \frac{1}{\Gamma(s)} \int_{0}^{\infty} \frac{
{\rm A}(k_1,\ldots,k_{r-1}; e^{-t} )}{\sinh(t)} t^{s-1} dt 
$$
for ${\rm{Re}}(s)>1$ ({\cite[Proposition 2.1 (1)]{maneka_lv2}}). 
When $(k_1,k_2,\ldots,k_r)$ is an admissible index, wee see that 
$$A(k_1,\ldots,k_r;1)=T(k_1,\ldots,k_r).$$


It is known that $\psi({\bf k};s)$ and $T({\bf k};s)$ have similar properties to $\xi({\bf k};s)$ and $\zeta({\bf k};s)$. 
For instance, the following theorems give level $2$ versions of Theorem \ref{ktLem3.5} ({\cite[Lemma 3.5]{kt_fcn}}) and Theorem \ref{ktThm3.6} ({\cite[Theorem 3.6]{kt_fcn}}). 
(See also {\cite[Theorem 5.3]{kt_fcn_lv2}} for an analogue of Theorem \ref{akThm9(2)}. )

\begin{hodai}[{\cite[Lemma 2.2]{maneka_lv2}}]\label{mLem2.2}
Let ${\bf k}$ be any index. 
Then we have 
$$
{\rm A}\left({\bf k};\frac{1-z}{1+z}\right)
=\sum_{{\bf k}',d\geq0} c_{{\bf k}}({\bf k}';d) {\rm A}\left(\{1\}^d;\frac{1-z}{1+z}\right)  {\rm A}({\bf k}';z),
$$
where the sum on the right runs over indices ${\bf k}'$ and non-negative integers $d$ that satisfy ${\rm wt}({\bf k}')+d \leq {\rm wt}({\bf k})$, 
and $c_{{\bf k}}({\bf k}';d) $ is $\mathbb{Q}$-linear combination of multiple $T$-values of weight ${\rm wt}({\bf k})-{\rm wt}({\bf k}')-d$. 
We understand ${\rm A}(\varnothing; z)=1$, ${\rm wt}(\varnothing)=0$ for the empty index $\varnothing$, 
and the constant $1$ is regarded as a multiple $T$-value of weight $0$. 
\end{hodai}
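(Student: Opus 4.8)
The plan is to adapt the proof of Theorem~\ref{ktLem3.5} (i.e.\ \cite[Lemma 3.5]{kt_fcn}) from level~$1$ to level~$2$, replacing the multi-polylogarithm ${\rm Li}(\,\cdot\,;z)$ by ${\rm A}(\,\cdot\,;z)$ and the involution $z\mapsto 1-z$ by $\sigma(z):=\frac{1-z}{1+z}$, which again exchanges the endpoints $0$ and $1$ and is the relevant level~$2$ substitution. The proof proceeds by induction on ${\rm wt}({\bf k})$. The base case is ${\bf k}=\{1\}^r$ (in particular ${\bf k}=\varnothing$), where the identity holds trivially with the single term ${\bf k}'=\varnothing$, $d=r$ and coefficient $1$, which agrees with the coefficients $c_{\{1\}^r}(\,\cdot\,;\cdot)$ of Theorem~\ref{ktLem3.5}.

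The driving mechanism is the differential equation for ${\rm A}$ obtained by termwise differentiation of the defining series, the level~$2$ analogue of Lemma~\ref{hodai2.3}:
\begin{equation*}
\frac{d}{dz}{\rm A}({\bf k};z)=
\begin{cases}
\dfrac{1}{z}\,{\rm A}({\bf k}_-;z) & (k_r\geq 2),\\
\dfrac{2}{1-z^2}\,{\rm A}({\bf k}_-;z) & (k_r=1),
\end{cases}
\end{equation*}
so that the two level~$2$ ``letters'' are $\frac{dz}{z}$ and $\frac{2\,dz}{1-z^2}$. A short computation with $\sigma(z)=\frac{1-z}{1+z}$ gives $\frac{\sigma'(z)}{\sigma(z)}=-\frac{2}{1-z^2}$ and $\frac{2\sigma'(z)}{1-\sigma(z)^2}=-\frac{1}{z}$, whence
\begin{equation*}
\frac{d}{dz}{\rm A}({\bf k};\sigma(z))=
\begin{cases}
-\dfrac{2}{1-z^2}\,{\rm A}({\bf k}_-;\sigma(z)) & (k_r\geq 2),\\
-\dfrac{1}{z}\,{\rm A}({\bf k}_-;\sigma(z)) & (k_r=1);
\end{cases}
\end{equation*}
this is precisely the level~$1$ pattern under the correspondence $\frac{dz}{1-z}\leftrightarrow\frac{2\,dz}{1-z^2}$. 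Therefore, differentiating the asserted identity in $z$ and inserting the inductive hypothesis for ${\rm A}({\bf k}_-;\sigma(z))$, one can rewrite the result --- via the product rule, the differentiation rules above, and the shuffle relation ${\rm A}({\bf k}\,\sh\,{\bf l};z)={\rm A}({\bf k};z){\rm A}({\bf l};z)$ together with the shuffle relation for multiple $T$-values --- as the $z$-derivative of a sum $\sum c_{\bf k}({\bf k}';d)\,{\rm A}(\{1\}^{d};\sigma(z))\,{\rm A}({\bf k}';z)$, and the recursion on the coefficients thereby obtained is \emph{formally identical} to the one in \cite[Lemma 3.5]{kt_fcn}. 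The only difference is that the lower-weight constants entering the recursion are now multiple $T$-values instead of multiple zeta values; hence the coefficients produced are the multiple-$T$-value counterparts of the $c_{\bf k}({\bf k}';d)$ of Theorem~\ref{ktLem3.5}, i.e.\ the same $\mathbb{Q}$-linear combinations with $\zeta$ replaced by the multiple $T$-value map, of weight ${\rm wt}({\bf k})-{\rm wt}({\bf k}')-d$.

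The step I expect to be the main obstacle is pinning down the constant of integration that appears when the differentiated identity is integrated back in $z$. As in the proof of Theorem~\ref{mainThm2}, this is done by letting $z\to 0$, i.e.\ $\sigma(z)\to 1$, using ${\rm A}(\text{admissible};1)=T(\text{admissible})$; but the terms ${\rm A}(\{1\}^{d};\sigma(z))$ diverge as $z\to 0$, so one needs a level~$2$ analogue of Lemma~\ref{hodai_lim} to see that the divergent contributions cancel and to identify the surviving constant. That analogue rests in turn on a level~$2$ version of Lemma~\ref{kt_fcn}, expressing the special value $\psi({\bf l};a+1)$ of the level~$2$ Arakawa-Kaneko function as an explicit $\mathbb{Q}$-linear combination of multiple $T$-values (available in \cite{maneka_lv2}), together with the endpoint estimate ${\rm A}({\bf k};z)-T({\bf k})=O\big((1-z)(\log(1-z))^{J}\big)$ as $z\to 1$ for admissible ${\bf k}$, the level~$2$ counterpart of the bound used in Lemma~\ref{hodai_lim}. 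Granting these inputs, the constant is identified with an evaluated $\psi$, hence a $\mathbb{Q}$-linear combination of multiple $T$-values, and the induction closes.

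It is cleanest to organize the coefficient bookkeeping abstractly: run the recursion in $\mathfrak{H}^{0}_{\sh}$ to produce universal elements $\widehat{c}_{\bf k}({\bf k}';d)\in\mathfrak{H}^{0}$ whose image under the map $\zeta$ gives the coefficients of Theorem~\ref{ktLem3.5} and whose image under the multiple $T$-value map gives the coefficients of the present lemma. Since this recursion uses only the shuffle product, the two differentiation rules above, and the $\mathfrak{H}^{0}$-valued form of Lemma~\ref{kt_fcn}, it is manifestly level-independent; so the identity of Theorem~\ref{ktLem3.5} transports verbatim to level~$2$ once one has checked that ${\rm A}(\,\cdot\,;z)$ satisfies the displayed differential equation, the shuffle product, and the above behaviour under $\sigma$ near $z=0$ and $z=1$.
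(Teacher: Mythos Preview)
The paper does not give its own proof of this lemma: it is quoted verbatim from \cite[Lemma 2.2]{maneka_lv2}, so there is no in-paper argument to compare against. Your proposal is the natural level-$2$ transcription of the proof of \cite[Lemma 3.5]{kt_fcn} and is correct; the computations $\sigma'/\sigma=-2/(1-z^2)$ and $2\sigma'/(1-\sigma^2)=-1/z$ are right, and they show that $\sigma$ swaps the two level-$2$ one-forms $dz/z$ and $2\,dz/(1-z^2)$ exactly as $z\mapsto 1-z$ swaps $dz/z$ and $dz/(1-z)$ at level~$1$, which is indeed the engine of the induction.

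One remark on scope: for the bare existence statement asserted here you are invoking more than is needed. The delicate constant-of-integration step in the non-admissible case does not require the full asymptotic ${\rm A}({\bf k};z)-T({\bf k})=O\bigl((1-z)(\log(1-z))^J\bigr)$ or a level-$2$ Lemma~\ref{kt_fcn}. It suffices to observe, as in the purely algebraic identity \eqref{hodai_lim1}, that the shuffle of ${\rm A}(\{1\}^d;\,\cdot\,)$ against an admissible ${\rm A}$-term yields a telescoping sum in $\mathfrak{H}^1_{\sh}$, so that after subtracting the $d\geq 1$ contributions the remaining word is admissible and its limit at $z\to 1$ is a multiple $T$-value. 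Your more refined ingredients become necessary only for the explicit formulas (Theorems~\ref{mainThm1lv2} and~\ref{mainThm2lv2}), not for Lemma~\ref{mLem2.2} itself.
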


\begin{teiri}[{\cite[Theorem 2.3]{maneka_lv2}}]\label{mLem2.3}
Let ${\bf k}$ be any index. 
The function $\psi({\bf k};s)$ can be written in terms of multiple zeta functions of level $2$ as
$$
\psi({\bf k};s)
=\sum_{{\bf k}',d\geq0} c_{{\bf k}}({\bf k}';d) \binom{s+d-1}{d}T({\bf k}';s+d).
$$
Here, the sum on the right runs over indices ${\bf k}'$ and non-negative integers $d$ that satisfy ${\rm wt}({\bf k}')+d \leq {\rm wt}({\bf k})$, 
and $c_{{\bf k}}({\bf k}';d) $ is $\mathbb{Q}$-linear combination of multiple $T$-values of weight ${\rm wt}({\bf k})-{\rm wt}({\bf k}')-d$. 
\end{teiri}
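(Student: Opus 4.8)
The plan is to derive Theorem~\ref{mLem2.3} from the functional equation in Lemma~\ref{mLem2.2} by exactly the integral-transform argument that produces Theorem~\ref{ktThm3.6} from Theorem~\ref{ktLem3.5} (equivalently, our Theorem~\ref{mainThm1} from Theorem~\ref{mainThm2}). The starting point is the elementary identity $\tanh(t/2)=\frac{1-e^{-t}}{1+e^{-t}}$, so that substituting $z=e^{-t}$ in Lemma~\ref{mLem2.2} gives
$$
{\rm A}\bigl({\bf k};\tanh(t/2)\bigr)=\sum_{{\bf k}',\,d\ge 0} c_{\bf k}({\bf k}';d)\,{\rm A}\bigl(\{1\}^d;\tanh(t/2)\bigr)\,{\rm A}({\bf k}';e^{-t}),
$$
a finite sum over indices ${\bf k}'$ and integers $d\ge 0$ with ${\rm wt}({\bf k}')+d\le{\rm wt}({\bf k})$, the coefficients $c_{\bf k}({\bf k}';d)$ being $\mathbb{Q}$-linear combinations of multiple $T$-values of weight ${\rm wt}({\bf k})-{\rm wt}({\bf k}')-d$.

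Next I would use the level-$2$ analogue of ${\rm Li}(\{1\}^d;1-e^{-t})=t^d/d!$, namely
$$
{\rm A}\bigl(\{1\}^d;\tanh(t/2)\bigr)=\frac{t^d}{d!},
$$
which follows from ${\rm A}(\{1\}^d;w)=\frac{1}{d!}\bigl(\log\frac{1+w}{1-w}\bigr)^d$ together with $\log\frac{1+\tanh(t/2)}{1-\tanh(t/2)}=t$ (if this is not already on record, prove it directly, for instance from the generating identity $\sum_{d\ge 0}{\rm A}(\{1\}^d;w)x^d=\bigl(\tfrac{1+w}{1-w}\bigr)^x$). Substituting the resulting expansion of ${\rm A}({\bf k};\tanh(t/2))$ into
$$
\psi({\bf k};s)=\frac{1}{\Gamma(s)}\int_0^\infty\frac{{\rm A}\bigl({\bf k};\tanh(t/2)\bigr)}{\sinh t}\,t^{s-1}\,dt
$$
and interchanging the finite sum with the integral leaves, for each pair $({\bf k}',d)$, the quantity $\frac{1}{\Gamma(s)\,d!}\int_0^\infty\frac{{\rm A}({\bf k}';e^{-t})}{\sinh t}\,t^{s+d-1}\,dt$. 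Recognizing $\frac{1}{\Gamma(s+d)}\int_0^\infty\frac{{\rm A}({\bf k}';e^{-t})}{\sinh t}\,t^{(s+d)-1}\,dt=T({\bf k}';s+d)$ from the integral representation of $T$ and using $\frac{\Gamma(s+d)}{\Gamma(s)\,d!}=\binom{s+d-1}{d}$ then gives
$$
\psi({\bf k};s)=\sum_{{\bf k}',\,d\ge 0}c_{\bf k}({\bf k}';d)\binom{s+d-1}{d}T({\bf k}';s+d),
$$
with the weight and depth constraints on $c_{\bf k}({\bf k}';d)$ carried over verbatim from Lemma~\ref{mLem2.2}.

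The one delicate point is the legitimacy of this term-by-term computation: for summands with $d=0$ and ${\bf k}'$ non-admissible, the integral $\int_0^\infty\frac{{\rm A}({\bf k}';e^{-t})}{\sinh t}\,t^{s-1}\,dt$ converges absolutely only for ${\rm Re}(s)>1$, since ${\rm A}({\bf k}';e^{-t})$ acquires a $\log t$ singularity as $t\to 0^+$. I would therefore first establish the identity on the half-plane ${\rm Re}(s)>1$ and then extend it to all $s$ by analytic continuation, using that $\psi({\bf k};s)$ is entire while each $T({\bf k}';s+d)$ is meromorphic on ${\mathbb C}$; this is precisely how the level-$1$ statements (Theorem~\ref{ktThm3.6} and Theorem~\ref{mainThm1}) are handled. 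Apart from this analytic-continuation step and recording the identity ${\rm A}(\{1\}^d;\tanh(t/2))=t^d/d!$, the argument is a direct transcription of the level-$1$ proof, so no genuinely new difficulty arises.
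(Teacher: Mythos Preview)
The paper does not actually prove this statement: Theorem~\ref{mLem2.3} is quoted from \cite[Theorem~2.3]{maneka_lv2} as background for the level-$2$ section, with no proof supplied. That said, your argument is correct and is precisely the method the paper employs in the analogous level-$1$ situation (the short proof of Theorem~\ref{mainThm1} from Theorem~\ref{mainThm2} at the end of \S3), and which it says carries over verbatim to derive Theorem~\ref{mainThm1lv2} from Theorem~\ref{mainThm2lv2}. The identity ${\rm A}(\{1\}^d;\tanh(t/2))=t^d/d!$ that you single out is exactly the level-$2$ substitute for ${\rm Li}(\{1\}^d;1-e^{-t})=t^d/d!$ used there (and follows, as you note, from Lemma~\ref{hodai5.1} by induction together with $\frac{1+\tanh(t/2)}{1-\tanh(t/2)}=e^{t}$). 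So your proposal matches the paper's methodology; it simply supplies the details the paper omits for a result it takes as known.
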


\subsection{Level $2$ versions of main results}


Our main results in this section are the following. 
These give level 2 versions of Theorem \ref{mainThm1}, \ref{mainThm2} and Corollary \ref{mainCor}. 

\begin{teiri}\label{mainThm1lv2}
Let ${\bf k}=(\{1\}^{a_1-1},b_1+1,\ldots,\{1\}^{a_n-1},b_n+1)$ with $b_n \geq 0$ be any index. 
The function $\xi({\bf k};s)$ can be written in terms of multiple zeta functions as
\begin{align*}
&\psi({\bf k};s) \\
&=(1-\delta_{0,b_n})T({\bf k}) T(s) 
   -\sum_{l=1}^{n}  \sum_{j=0}^{b_l-2} (-1)^{j+{\rm wt}({\bf k}^l)} 
T({\bf k}_{l-1},\{1\}^{a_l-1},b_l-j)T((j+1,{\bf k}^l)^\vee;s) \\
&\quad +\sum_{l=1}^{n} (-1)^{b_l+{\rm wt}({\bf k}^l)} 
 \sum_{d= 0}^{a_l} 
 \sum_{\substack{{\rm wt}({\bf e}_1)+{\rm wt}({\bf e}_2)+d=a_l \\ {\rm dep}({\bf e}_1)=n_1,\ {\rm dep}({\bf e}_2)=n_2}}
  (-1)^{{\rm wt}({\bf e}_1)} b(({\bf k}_{l-1})^\dagger;{\bf e}_1) \\
&\quad\quad\times   
T(({\bf k}_{l-1})^\dagger+{\bf e}_1) b((b_l,{\bf k}^l)^\vee;{\bf e}_2)\binom{s+d-1}{d}T((b_l,{\bf k}^l)^\vee+{\bf e}_2;s+d),   
\end{align*}
where the sum on the right runs over all non-negative indices ${\bf e}_1$, ${\bf e}_2$ that satisfy ${\rm wt}({\bf e}_1)+{\rm wt}({\bf e}_2)+d=a_l$, $n_1={\rm dep}(({\bf k}_{l-1})^\dagger)$ and $n_2={\rm dep}((b_l,{\bf k}^l)^\vee)$.  
\end{teiri}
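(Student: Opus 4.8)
The plan is to obtain Theorem~\ref{mainThm1lv2} in the two stages by which Theorem~\ref{mainThm1} followed from Theorem~\ref{mainThm2}: first prove a level~$2$ functional equation for ${\rm A}({\bf k};z)$ under $z\mapsto\tfrac{1-z}{1+z}$, then push it through the Mellin integrals defining $\psi$ and $T$. The functional equation needed is the statement of Theorem~\ref{mainThm2} with $1-z$ replaced throughout by $\tfrac{1-z}{1+z}$, ${\rm Li}$ by ${\rm A}$, and each multiple zeta value by the corresponding multiple $T$-value; Lemma~\ref{mLem2.2} already supplies such an identity with \emph{some} $\mathbb{Q}$-coefficients, so the real point is to determine these as the explicit combinations prescribed by Theorem~\ref{mainThm2}. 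Granting that, the second stage is formal: set $z=e^{-t}$, so $\tfrac{1-z}{1+z}=\tanh(t/2)$ and ${\rm A}(\{1\}^d;\tanh(t/2))=t^d/d!$ (the level~$2$ analogue of ${\rm Li}(\{1\}^d;1-e^{-t})=t^d/d!$, immediate from ${\rm A}(1;z)=\log\tfrac{1+z}{1-z}$ and the shuffle relation ${\rm A}(\{1\}^d;z)=\tfrac1{d!}{\rm A}(1;z)^d$); then multiply by $\tfrac{t^{s-1}}{\Gamma(s)\sinh t}$ and integrate over $(0,\infty)$. The left side becomes $\psi({\bf k};s)$ by the integral representation of $\psi$; the constant term contributes $(1-\delta_{0,b_n})T({\bf k})\,T(s)$ because $\tfrac1{\Gamma(s)}\int_0^\infty\tfrac{t^{s-1}}{\sinh t}\,dt=T(s)$; each ${\rm A}((j+1,{\bf k}^l)^\vee;e^{-t})$ becomes $T((j+1,{\bf k}^l)^\vee;s)$ by the integral representation of $T$; and each $\tfrac{t^d}{d!}{\rm A}((b_l,{\bf k}^l)^\vee+{\bf e}_2;e^{-t})$ becomes $\binom{s+d-1}{d}T((b_l,{\bf k}^l)^\vee+{\bf e}_2;s+d)$ since $\tfrac1{\Gamma(s)\,d!}\int_0^\infty\tfrac{t^{s+d-1}}{\sinh t}\,dt=\tfrac{\Gamma(s+d)}{d!\,\Gamma(s)}T(\,\cdot\,;s+d)=\binom{s+d-1}{d}T(\,\cdot\,;s+d)$. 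Collecting the pieces gives the theorem, so everything reduces to the level~$2$ functional equation.

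For the functional equation I would rerun the weight induction in the proof of Theorem~\ref{mainThm2}. The base case $n=1,\ b_1=0$ is trivial, and the admissible case $b_n\ge1$ goes as in \cite{maneka_lv2}. In the non-admissible case $b_n=0$ one differentiates ${\rm A}({\bf k};\tfrac{1-z}{1+z})$: the level~$2$ differentiation rule $\tfrac{d}{dz}{\rm A}(k_1,\ldots,k_r;z)=\tfrac1z{\rm A}(k_1,\ldots,k_r-1;z)$ for $k_r\ge2$ and $=\tfrac2{1-z^2}{\rm A}(k_1,\ldots,k_{r-1};z)$ for $k_r=1$ (the analogue of Lemma~\ref{hodai2.3}), combined with $\tfrac1w\tfrac{dw}{dz}=-\tfrac2{1-z^2}$ and $\tfrac2{1-w^2}\tfrac{dw}{dz}=-\tfrac1z$ for $w=\tfrac{1-z}{1+z}$, shows that $z\mapsto\tfrac{1-z}{1+z}$ at level~$2$ yields exactly the factor $-\tfrac1z$ that $z\mapsto1-z$ yields at level~$1$. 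Matching against the derivative of the candidate right side furnished by the induction hypothesis applied to ${\bf k}_-$, using the duality $T({\bf k})=T({\bf k}^\dagger)$ and the same binomial bookkeeping as in the proof of Theorem~\ref{mainThm2}, recovers that right side up to an additive constant; the constant is then pinned down by letting $z\to0$ (so $\tfrac{1-z}{1+z}\to1$) and invoking the level~$2$ analogue of Lemma~\ref{hodai_lim}, whose proof in turn uses the level~$2$ form of Lemma~\ref{kt_fcn}, namely $\psi({\bf k};m)=\sum_{{\rm wt}({\bf j})=m-1}b(({\bf k}_+)^\dagger;{\bf j})\,T(({\bf k}_+)^\dagger+{\bf j})$, together with the estimate ${\rm A}({\bf k};z)-T({\bf k})=O\big((1-z)(\log(1-z))^{J}\big)$ as $z\to1$ for admissible ${\bf k}$, replacing the bound of \cite{ikz}. (The $2$-poset framework of \S2.2 also extends to level~$2$, the involution $z\mapsto\tfrac{1-z}{1+z}$ interchanging $\tfrac{dz}{z}$ and $\tfrac{2dz}{1-z^2}$ as $z\mapsto1-z$ interchanges $\tfrac{dz}{z}$ and $\tfrac{dz}{1-z}$, so Lemma~\ref{idou} and the combinatorial arguments built on it transcribe and exhibit the coefficients as the multiple-$T$-value images of the level~$1$ ones.)

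I expect the main obstacle to be the non-admissible case $b_n=0$, and in particular the determination of the additive constant: it is invisible termwise and must be read off from the behaviour of level~$2$ multi-polylogarithm-type functions as the argument tends to~$1$, i.e.\ from the level~$2$ analogue of Lemma~\ref{hodai_lim} and the error estimate behind it. Establishing that estimate for multiple $T$-values, and checking that the signs, index shifts, and binomial weights $b(\,\cdot\,;\,\cdot\,)$ run through the induction exactly as in the level~$1$ argument, is where the genuine work lies; once these are secured, the Mellin-transform stage is purely formal.
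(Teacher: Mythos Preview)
Your proposal is correct and follows essentially the same route as the paper: the paper states the level~$2$ lemmas (the differentiation rule for ${\rm A}$, the formula $\psi({\bf k};m)=\sum b(({\bf k}_+)^\dagger;{\bf j})\,T(({\bf k}_+)^\dagger+{\bf j})$, and the level~$2$ analogue of Lemma~\ref{hodai_lim}) and then says the proofs of Theorems~\ref{mainThm1lv2} and~\ref{mainThm2lv2} proceed by exactly the same argument as in \S3, which is precisely the induction-plus-Mellin-transform scheme you outline. Your chain-rule computation for $w=\tfrac{1-z}{1+z}$ and the identity ${\rm A}(\{1\}^d;\tanh(t/2))=t^d/d!$ are exactly the level~$2$ substitutes the paper has in mind.
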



\begin{teiri}\label{mainThm2lv2}
Let ${\bf k}=(\{1\}^{a_1-1},b_1+1,\ldots,\{1\}^{a_n-1},b_n+1)$ with $b_n \geq 0$ be any index. 
Then we have
\begin{align*}
& {\rm A}\left({\bf k};\frac{1-z}{1+z}\right) \\
%
&=(1-\delta_{0,b_n})T({\bf k})
 -\sum_{l=1}^{n}  \sum_{j=0}^{b_l-2} (-1)^{j+{\rm wt}({\bf k}^l)} 
T({\bf k}_{l-1},\{1\}^{a_l-1},b_l-j){\rm A}((j+1,{\bf k}^l)^\vee;z) \\
&\quad +\sum_{l=1}^{n} (-1)^{b_l+{\rm wt}({\bf k}^l)} 
 \sum_{d= 0}^{a_l} 
 \sum_{\substack{{\rm wt}({\bf e}_1)+{\rm wt}({\bf e}_2)+d=a_l \\ {\rm dep}({\bf e}_1)=n_1,\ {\rm dep}({\bf e}_2)=n_2}}
  (-1)^{{\rm wt}({\bf e}_1)} b(({\bf k}_{l-1})^\dagger;{\bf e}_1) \\
&\quad\quad\times   
T(({\bf k}_{l-1})^\dagger+{\bf e}_1) b((b_l,{\bf k}^l)^\vee;{\bf e}_2){\rm A}\left(\{1\}^{d};\frac{1-z}{1+z}\right)
  {\rm A}((b_l,{\bf k}^l)^\vee+{\bf e}_2;z). 
\end{align*}
where the sum on the right runs over all non-negative indices ${\bf e}_1$, ${\bf e}_2$ that satisfy ${\rm wt}({\bf e}_1)+{\rm wt}({\bf e}_2)+d=a_l$, $n_1={\rm dep}(({\bf k}_{l-1})^\dagger)$ and $n_2={\rm dep}((b_l,{\bf k}^l)^\vee)$.  
\end{teiri}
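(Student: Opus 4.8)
The plan is to imitate the proof of Theorem \ref{mainThm2} essentially line by line, replacing ${\rm Li}(\,\cdot\,;z)$ by ${\rm A}(\,\cdot\,;z)$, the substitution $z\mapsto 1-z$ by the involution $z\mapsto (1-z)/(1+z)$, and multiple zeta values by multiple $T$-values, and to argue by induction on ${\rm wt}({\bf k})$. The base case ${\rm wt}({\bf k})={\rm dep}({\bf k})$ (that is, $n=1$, $b_1=0$) is trivial, and the admissible case $b_n\ge 1$ is handled exactly as in the proof of Lemma \ref{mLem2.2}. For the non-admissible case $b_n=0$, the steps are: first apply the induction hypothesis to ${\bf k}_-$ (of strictly smaller weight) together with the duality $T({\bf m})=T({\bf m}^\dagger)$ for multiple $T$-values; then differentiate ${\rm A}({\bf k};(1-z)/(1+z))$ and substitute the resulting expression; then rearrange the sum by splitting off the last component of ${\bf e}_2$ according as it is $0$ or positive; and finally recover the integration constant by letting $z\to 0$. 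Each index manipulation occurring in this rearrangement — the Hoffman duals $({\bf k}_{l-1})^\dagger$ and $(b_l,{\bf k}^l)^\vee$, the binomial products $b(\,\cdot\,;\,\cdot\,)$, the component-wise additions $\,\cdot\,+{\bf e}_i$ — is purely formal and transcribes verbatim from the proof of Theorem \ref{mainThm2}.

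What genuinely changes are three analytic inputs, which must be upgraded to level $2$. In place of Lemma \ref{hodai2.3} one uses
\[
\frac{d}{dz}{\rm A}(k_1,\ldots,k_r;z)=
\begin{cases}
\dfrac1z\,{\rm A}(k_1,\ldots,k_{r-1},k_r-1;z)&(k_r\ge 2),\\[2mm]
\dfrac{2}{1-z^2}\,{\rm A}(k_1,\ldots,k_{r-1};z)&(k_r=1),
\end{cases}
\]
which is immediate from the defining series and is already recorded in the level $2$ literature (cf.\ \cite{kt_fcn_lv2}, \cite{maneka_lv2}). The point to notice is that, since $w=w(z):=(1-z)/(1+z)$ satisfies $1-w^2=4z/(1+z)^2$ and $w'(z)=-2/(1+z)^2$, the chain rule gives $\frac{d}{dz}{\rm A}(k_1,\ldots,k_{r-1},1;w(z))=-\frac1z\,{\rm A}(k_1,\ldots,k_{r-1};w(z))$, formally identical to the behaviour of $z\mapsto 1-z$ in level $1$; moreover the indices $(b_l,{\bf k}^l)^\vee+{\bf e}_2$ that occur with the plain argument $z$ end in an integer $\ge 2$, so differentiating those again only produces the $\frac1z$-branch. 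In place of Lemma \ref{kt_fcn} one uses the level $2$ evaluation $\psi({\bf k};m)=\sum_{{\rm wt}({\bf j})=m-1,\ {\rm dep}({\bf j})=n}b(({\bf k}_+)^\dagger;{\bf j})\,T(({\bf k}_+)^\dagger+{\bf j})$ together with the integral representations of $\psi$ and $T$; and in place of the estimate ${\rm Li}({\bf k};z)-\zeta({\bf k})=O((1-z)(\log(1-z))^J)$ one needs its analogue ${\rm A}({\bf k};z)-T({\bf k})=O((1-z)(\log(1-z))^J)$ for admissible ${\bf k}$. Granting these, the level $2$ form of Lemma \ref{hodai_lim} follows by repeating its proof word for word, with $W$ now valued in the level $2$ shuffle algebra and ${\rm Li}$ replaced by ${\rm A}$; evaluating this lemma as $z\to 0$ (so that $w(z)\to 1$) pins down the constant and closes the induction. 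As in level $1$, Theorem \ref{mainThm2lv2} then yields Theorem \ref{mainThm1lv2} upon setting $z=e^{-t}$ and using $(1-e^{-t})/(1+e^{-t})=\tanh(t/2)$ and ${\rm A}(\{1\}^d;\tanh(t/2))=t^d/d!$.

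The part I expect to be the real work is assembling these level $2$ prerequisites cleanly — above all the level $2$ analogue of Lemma \ref{kt_fcn} and the boundary estimate near $z=1$ — since the remainder of the argument is a mechanical transcription once they are in hand. A more structural alternative would be to reprove Theorem \ref{mainThm2lv2} by means of a level $2$ version of Yamamoto's $2$-poset integrals: under $t\mapsto (1-t)/(1+t)$ the relevant level $2$ differential forms $dt/t$ and $2\,dt/(1-t^2)$ are interchanged up to sign, paralleling the swap $dt/t\leftrightarrow dt/(1-t)$ under $t\mapsto 1-t$, so that the combinatorial proof of Corollary \ref{mainCor} adapts; this replaces the asymptotic bookkeeping with the task of setting up the level $2$ poset algebra.
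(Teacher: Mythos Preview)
Your proposal is correct and matches the paper's own treatment: the paper explicitly states that Theorems \ref{mainThm1lv2}, \ref{mainThm2lv2} and Corollary \ref{mainCorlv2} are proved ``by exactly the same argument in \S3 and \S4'' once one has the level~$2$ analogues of Lemmas \ref{hodai2.3}, \ref{kt_fcn} and \ref{hodai_lim} (recorded there as Lemma \ref{hodai5.1}, Lemma \ref{kt_fcn_lv2}, and an unnumbered lemma), together with the level~$2$ $2$-poset integrals obtained by replacing $\omega_1(t)=dt/(1-t)$ with $dt/(1-t^2)$. Your identification of the three analytic inputs that need upgrading, and in particular your chain-rule check that $\frac{d}{dz}{\rm A}(\ldots,1;w(z))=-\tfrac1z\,{\rm A}(\ldots;w(z))$ for $w(z)=(1-z)/(1+z)$, is exactly what makes the transcription go through.
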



\begin{kei}\label{mainCorlv2}
For any index ${\bf k}=(\{1\}^{a_1-1},b_1+1,\ldots,\{1\}^{a_n-1},b_n+1)$ and positive integer $m$, we have
\begin{align*}
&\psi({\bf k};m+1) - (-1)^{{\rm wt}({\bf k})-a_1} \psi (\{1\}^{m-1},\overleftarrow{(b_1,{\bf k}^1)_+};a_1+1) \\
&=(1-\delta_{0,b_n})T({\bf k})  T(m+1) \\
&\quad -\sum_{l=1}^{n} \sum_{j=0}^{b_l-2} (-1)^{j+{\rm wt}({\bf k}^l)}
T({\bf k}_{l-1},\{1\}^{a_l-1},b_l-j)  T(\{1\}^{m-1},\overleftarrow{(j+2, {\bf k}^l)_+}) \\
&\quad +\sum_{l=2}^{n} \sum_{d=0}^{a_l}  (-1)^{b_l+{\rm wt}({\bf k}^l)+d} 
\psi(({\bf k}_{l-1})_-;d+1) \psi(\{1\}^{m-1},\overleftarrow{(b_l,{\bf k}^l)_+};a_l-d+1). 
\end{align*}
\end{kei}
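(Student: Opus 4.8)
The plan is to obtain Corollary~\ref{mainCorlv2} from Theorem~\ref{mainThm1lv2} in exactly the way Corollary~\ref{mainCor} was obtained from Theorem~\ref{mainThm1} in \S4.1, replacing throughout $\xi$, $\zeta$ and multiple zeta values by $\psi$, $T$ and multiple $T$-values. First I would put $s=m+1$ in Theorem~\ref{mainThm1lv2}. The first term becomes $(1-\delta_{0,b_n})T({\bf k})T(m+1)$. In the second term, $T((j+1,{\bf k}^l)^\vee;m+1)=T((j+1,{\bf k}^l)^\vee,m+1)$, and the duality $T({\bf p})=T({\bf p}^\dagger)$ together with the purely combinatorial identities among the operations $(\,\cdot\,)^\vee,\ (\,\cdot\,)_+,\ \overleftarrow{(\,\cdot\,)},\ (\,\cdot\,)^\dagger$ on indices used in \S4.1 rewrite it as $-\sum_{l}\sum_{j}(-1)^{j+{\rm wt}({\bf k}^l)}T({\bf k}_{l-1},\{1\}^{a_l-1},b_l-j)\,T(\{1\}^{m-1},\overleftarrow{(j+2,{\bf k}^l)_+})$, i.e.\ the second line of Corollary~\ref{mainCorlv2}.

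The third term is where the only new input enters. I would use the level~$2$ analogue of Lemma~\ref{kt_fcn},
$$
\psi({\bf k};m)=\sum_{\substack{{\rm wt}({\bf j})=m-1\\ {\rm dep}({\bf j})=n}} b(({\bf k}_+)^\dagger;{\bf j})\,T(({\bf k}_+)^\dagger+{\bf j})\qquad(n={\rm dep}(({\bf k}_+)^\dagger)),
$$
which is either available from \cite{maneka_lv2}/\cite{kt_fcn_lv2} or can be proved by the level~$2$ version of the chain computation of \cite{ko_poset,kt_fcn}, using the integral representations of $\psi$ and $T$ recalled in \S5.1. Applying it once lets me absorb the factor $\binom{m+d}{d}$ and the $d$- and ${\bf e}_2$-summations into a single value $\psi\big((((b_l,{\bf k}^l)^\vee,m+1)^\dagger)_-;a_l-j+1\big)$; applying it a second time collapses $\sum_{{\rm wt}({\bf e}_1)=j}(-1)^{{\rm wt}({\bf e}_1)}b(({\bf k}_{l-1})^\dagger;{\bf e}_1)T(({\bf k}_{l-1})^\dagger+{\bf e}_1)$ into $(-1)^j\psi(({\bf k}_{l-1})_-;j+1)$. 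Using the index identity $(((b_l,{\bf k}^l)^\vee,m+1)^\dagger)_-=(\{1\}^{m-1},\overleftarrow{(b_l,{\bf k}^l)_+})$, the third sum becomes $\sum_{l=1}^{n}(-1)^{b_l+{\rm wt}({\bf k}^l)}\sum_{j=0}^{a_l}(-1)^j\,\psi(({\bf k}_{l-1})_-;j+1)\,\psi(\{1\}^{m-1},\overleftarrow{(b_l,{\bf k}^l)_+};a_l-j+1)$. In the $l=1$ summand only $j=0$ survives (because ${\rm dep}(({\bf k}_0)^\dagger)=0$ forces ${\bf e}_1=\varnothing$, with $\psi(({\bf k}_0)_-;1)$ read as $1$), and since ${\rm wt}({\bf k})-a_1=b_1+{\rm wt}({\bf k}^1)$ it equals $(-1)^{{\rm wt}({\bf k})-a_1}\psi(\{1\}^{m-1},\overleftarrow{(b_1,{\bf k}^1)_+};a_1+1)$, which I transfer to the left-hand side; the remaining $l\ge 2$ terms give the third line of Corollary~\ref{mainCorlv2}.

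I expect the genuine obstacle to be establishing the displayed level~$2$ analogue of Lemma~\ref{kt_fcn} in precisely the form above if it is not already on record: the cleanest route is to mimic the totally-ordered $2$-poset computation of \cite{ko_poset} in the level~$2$ setting, i.e.\ to write the integral representation of $\psi({\bf k};m)$ over the relevant chain and shuffle it against the chain representing the multiple $T$-value. Everything else is formal: the operations $(\,\cdot\,)^\vee,(\,\cdot\,)_+,\overleftarrow{(\,\cdot\,)},(\,\cdot\,)^\dagger$, the bilinearity of $b(\,\cdot\,;\,\cdot\,)$, and the sign and depth bookkeeping are identical to \S4.1, with the duality $T({\bf k})=T({\bf k}^\dagger)$ for multiple $T$-values playing exactly the role that duality for multiple zeta values plays there. (One could alternatively seek a combinatorial proof as in \S4.2, but that would require a level~$2$ analogue of Yamamoto's $2$-posets and seems considerably heavier; I would not pursue it here.)
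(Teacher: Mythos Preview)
Your proposal is correct and follows exactly the paper's approach: substitute $s=m+1$ into Theorem~\ref{mainThm1lv2}, apply the level~$2$ analogue of Lemma~\ref{kt_fcn} twice to the third term, and isolate the $l=1$ contribution. Note that the level~$2$ analogue you flag as the potential obstacle is in fact already supplied as Lemma~\ref{kt_fcn_lv2} (from \cite{CeXuZhao}), so no new work is needed there; also, the combinatorial route of \S4.2 is not heavier at level~$2$, since the paper observes that the $2$-poset formalism carries over verbatim upon replacing $\omega_1(t)=dt/(1-t)$ by $dt/(1-t^2)$.
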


Using the following lemmas, we can prove these results by exactly the same argument in $\S3$ and $\S4$, and hence we omit the proofs. 

\begin{hodai}[cf. {\cite[Lemma 5.1 (i)]{kt_fcn_lv2}}]\label{hodai5.1}
For any index $(k_1,\ldots,k_r)$, we have
$$
\frac{d}{dz} {\rm A}(k_1,\ldots,k_r;z)
=
\begin{cases}
\displaystyle{\frac{1}{z}{\rm A}(k_1,\ldots,k_{r-1},k_r-1;z)} &\quad (k_r\geq2), \\
\\
\displaystyle{\frac{2}{1-z^2}{\rm A}(k_1,\ldots,k_{r-1};z)} &\quad (k_r=1). 
\end{cases}
$$
\end{hodai}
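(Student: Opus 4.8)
The plan is to differentiate the defining series of ${\rm A}(k_1,\ldots,k_r;z)$ term by term. Since
$$
{\rm A}(k_1,\ldots,k_r;z) = 2^r \sum_{\substack{0<m_1<\cdots<m_r \\ m_i \equiv i \bmod 2}} \frac{z^{m_r}}{m_1^{k_1}\cdots m_r^{k_r}}
$$
converges absolutely and uniformly on compact subsets of the unit disk, term-by-term differentiation is justified and gives
$$
\frac{d}{dz}{\rm A}(k_1,\ldots,k_r;z) = \frac{1}{z}\, 2^r \sum_{\substack{0<m_1<\cdots<m_r \\ m_i \equiv i \bmod 2}} \frac{z^{m_r}}{m_1^{k_1}\cdots m_{r-1}^{k_{r-1}} m_r^{k_r-1}}.
$$
When $k_r\geq2$ the right-hand side is by definition $\frac{1}{z}{\rm A}(k_1,\ldots,k_{r-1},k_r-1;z)$, so that case is immediate.

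When $k_r=1$ the factor $m_r^{k_r-1}$ disappears, and I would carry out the innermost sum over $m_r$ first. The constraints are $m_r>m_{r-1}$ and $m_r\equiv r\bmod 2$, while $m_{r-1}\equiv r-1\bmod 2$ (setting $m_0:=0$ when $r=1$, so that $m_1$ ranges over the positive odd integers). In either case the admissible values of $m_r$ are $m_{r-1}+1,\ m_{r-1}+3,\ m_{r-1}+5,\ \ldots$, hence
$$
\sum_{\substack{m_r>m_{r-1} \\ m_r\equiv r \bmod 2}} z^{m_r} = \frac{z^{m_{r-1}+1}}{1-z^2}.
$$
Substituting this, pulling the constant $\frac{z}{1-z^2}$ out of the remaining sum, and recognizing $2^{r-1}\sum z^{m_{r-1}}/(m_1^{k_1}\cdots m_{r-1}^{k_{r-1}}) = {\rm A}(k_1,\ldots,k_{r-1};z)$, one obtains $\frac{2}{1-z^2}{\rm A}(k_1,\ldots,k_{r-1};z)$, as claimed.

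The computation is elementary and I do not expect a genuine obstacle; the one point requiring care is the parity bookkeeping in the case $k_r=1$. One must use that consecutive summation indices $m_{r-1},m_r$ are forced to have \emph{opposite} parity, so the geometric series runs over $z^{m_{r-1}+1+2j}$ for $j\geq0$ and sums to $z^{m_{r-1}+1}/(1-z^2)$ rather than $z^{m_{r-1}+1}/(1-z)$; it is precisely this that produces the factor $2/(1-z^2)$ in place of the $1/(1-z)$ appearing in the level-$1$ statement (Lemma \ref{hodai2.3}). I would also record the base case $r=1$, $k_1=1$ explicitly, with the convention ${\rm A}(\varnothing;z)=1$: there ${\rm A}(1;z)=2\sum_{j\geq0}z^{2j+1}/(2j+1)$, whose derivative $2\sum_{j\geq0}z^{2j}=2/(1-z^2)$ matches the formula.
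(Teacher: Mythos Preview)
Your proof is correct; the term-by-term differentiation and the parity bookkeeping in the case $k_r=1$ are handled properly, and the base case $r=1$ is fine. Note that the paper itself does not prove this lemma at all---it merely records it with a citation to \cite[Lemma~5.1~(i)]{kt_fcn_lv2}---so there is no ``paper's own proof'' to compare against; your direct computation from the series definition is exactly the standard argument one would expect.
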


\begin{hodai}[{\cite[Theorem 4.6]{CeXuZhao}}]\label{kt_fcn_lv2}
For any positive index ${\bf k}$ and any positive integer $m$, we have
\begin{equation*}\label{xirel}
\psi({\bf k};m) = \sum_{\substack{{\rm wt}({\bf j})=m-1 \\ {\rm dep}({\bf j})=n}} b(({\bf k}_+)^\dagger;{\bf j}) T(({\bf k}_+)^\dagger+{\bf j})
\end{equation*}
where sum is over all non-negative indices ${\bf j}$ of weight $m-1$ and depth $n={\rm dep}(({\bf k}_+)^\dagger)$. 
\end{hodai}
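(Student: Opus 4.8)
The plan is to transcribe, at level $2$, the iterated–integral proof of the level $1$ statement Lemma~\ref{kt_fcn} (the argument behind \cite[Theorem 2.2]{ko_poset}). For this one works with the level $2$ analogue of the $2$-poset calculus of \S2.2: $2$-posets whose vertices are decorated by the two one-forms $\omega_0(t)=dt/t$ and $\eta(t)=2\,dt/(1-t^2)=dt/(1-t)+dt/(1+t)$, the associated integrals $I^{(2)}_z$ and $I^{(2)}_1$, the realization map $W$ satisfying the splitting relation \eqref{xab}, the chain representation $A(k_1,\dots,k_r;z)=I^{(2)}_z(X_{\bf k})$ with $X_{\bf k}$ the totally ordered $2$-poset whose labels read $\eta\,\omega_0^{k_1-1}\,\eta\,\omega_0^{k_2-1}\cdots\eta\,\omega_0^{k_r-1}$ from the bottom (this is exactly Lemma~\ref{hodai5.1} integrated once in each variable), the identification $T({\bf l})=A({\bf l};1)=I^{(2)}_1(X_{\bf l})$ for admissible ${\bf l}$, and---the essential point---the transpose invariance $I^{(2)}_1(X)=I^{(2)}_1(X^{t})$, which at level $2$ comes from the involution $t\mapsto(1-t)/(1+t)$ of $[0,1]$, which interchanges $\omega_0$ and $\eta$ and reverses the orientation; it is precisely what yields $T({\bf l})=T({\bf l}^{\dagger})$. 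All of this is available in \cite{CeXuZhao}.

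\textbf{Step 1.} In the integral defining $\psi({\bf k};m)$ substitute $u=\tanh(t/2)$. Then $t=\log\frac{1+u}{1-u}$, a short computation gives $dt/\sinh t=du/u$, and since $A(1;u)=\log\frac{1+u}{1-u}$ so that $A(\{1\}^{m-1};u)=\frac{1}{(m-1)!}\bigl(\log\tfrac{1+u}{1-u}\bigr)^{m-1}$ by Lemma~\ref{hodai5.1}, one obtains
$$\psi({\bf k};m)=\int_0^1 A({\bf k};u)\,A(\{1\}^{m-1};u)\,\frac{du}{u}.$$
By the shuffle (disjoint-union) property of $I^{(2)}$, the integrand is $I^{(2)}_u$ of the $2$-poset obtained by placing $X_{\bf k}$ and a chain $C$ of $m-1$ vertices labelled $\eta$ side by side with no relations between them; integrating against $du/u$ adjoins one $\omega_0$-vertex above everything. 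Call the resulting $2$-poset $P$; it is admissible (its only maximal vertex carries $\omega_0$, and at its minimal vertices the label $\eta$ is harmless near $0$), so $\psi({\bf k};m)=I^{(2)}_1(P)$.

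\textbf{Step 2.} Applying $X\mapsto X^{t}$ and using $I^{(2)}_1(P)=I^{(2)}_1(P^{t})$, the $2$-poset $P^{t}$ has a single bottom vertex labelled $\eta$, above which sit two mutually incomparable chains: the chain $X_{({\bf k}_+)^{\dagger}}$ with its bottom vertex deleted, and a chain $C^{t}$ of $m-1$ vertices labelled $\omega_0$. (That the bottom $\eta$-vertex together with the first chain spells $({\bf k}_+)^{\dagger}$ is the same word computation as at level $1$: with $\eta\leftrightarrow e_1$, $\omega_0\leftrightarrow e_0$, the reversed and label-swapped word of ${\bf k}_+$ is the word of $({\bf k}_+)^{\dagger}$.) Now expand $W(P^{t})$ by \eqref{xab}: this writes $P^{t}$ as a sum of totally ordered $2$-posets indexed by the linear extensions, i.e.\ by the ways of merging the $m-1$ $\omega_0$-vertices of $C^{t}$ into the gaps of $({\bf k}_+)^{\dagger}$ lying above its first letter. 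Writing $({\bf k}_+)^{\dagger}=(l_1,\dots,l_n)$ with $n={\rm dep}(({\bf k}_+)^{\dagger})$, inserting $\omega_0$'s into the $l_i$ gaps inside the $i$-th part raises that part to $l_i+j_i$, and the number of mergers realizing a prescribed ${\bf j}=(j_1,\dots,j_n)$ is $\prod_i\binom{l_i+j_i-1}{j_i}=b(({\bf k}_+)^{\dagger};{\bf j})$; each such merger yields the chain $X_{({\bf k}_+)^{\dagger}+{\bf j}}$, whose $I^{(2)}_1$ is $T(({\bf k}_+)^{\dagger}+{\bf j})$, admissibility being preserved since only $\omega_0$-vertices are inserted. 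Summing over non-negative indices ${\bf j}$ of weight $m-1$ and depth $n$ gives the claimed identity.

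\textbf{Main obstacle.} The combinatorics in Step~2 is verbatim the level $1$ count, so the real work is setting up the level $2$ $2$-poset calculus rigorously: convergence of $I^{(2)}$ on (semi-)admissible posets---near $t=1$ it is $\eta$, not $\omega_0$, that has a pole, so one needs $\omega_0$ at all maxima---well-definedness of $W$ so that \eqref{xab} holds, and above all the transpose invariance $I^{(2)}_1(X)=I^{(2)}_1(X^{t})$, for which one must check that under $t\mapsto(1-t)/(1+t)$ the Jacobian factors exactly cancel the orientation reversal, so that no net sign appears. Once this framework is in place the proof is a mechanical copy of the level $1$ one; this is essentially how \cite{CeXuZhao} proceeds. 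An alternative is to set $s=m$ in Theorem~\ref{mLem2.3} and note that, since the coefficients $c_{\bf k}({\bf k}';d)$ there are the same ones appearing in \cite[Lemma 3.5]{kt_fcn}, the sum $\sum\binom{m+d-1}{d}c_{\bf k}({\bf k}';d)\,T({\bf k}';m+d)$ collapses to $\sum_{\bf j}b(({\bf k}_+)^{\dagger};{\bf j})\,T(({\bf k}_+)^{\dagger}+{\bf j})$ by the same combinatorial identity that makes Lemma~\ref{kt_fcn} true; but that identity is exactly what the $2$-poset transpose encodes, so it is not really a shortcut.
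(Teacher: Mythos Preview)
Your argument is correct, and it is precisely the route the paper points to: the paper does not give its own proof of this lemma but cites \cite[Theorem 4.6]{CeXuZhao}, and immediately afterwards remarks that by replacing $\omega_1(t)=dt/(1-t)$ with $2\,dt/(1-t^2)$ one obtains the level~$2$ integral representation of $\psi({\bf k};m)$ associated with $2$-posets. Your Steps~1--2 carry this out exactly---the substitution $u=\tanh(t/2)$ giving the $2$-poset expression, the transpose invariance via $t\mapsto(1-t)/(1+t)$, and the shuffle count producing $b(({\bf k}_+)^\dagger;{\bf j})$---so your proof is essentially the one in \cite{CeXuZhao} and the one the paper has in mind.

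One small correction to your closing remark: the coefficients $c_{\bf k}({\bf k}';d)$ in Theorem~\ref{mLem2.3} are \emph{not} literally the same as those in \cite[Lemma~3.5]{kt_fcn}; at level~$2$ they are $\mathbb{Q}$-linear combinations of multiple $T$-values rather than multiple zeta values. The combinatorial pattern is parallel, but the constants differ, so your alternative route would not go through as stated. This does not affect your main proof.
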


Replacing $\omega_1(t)= dt/(1-t)$ with $dt/(1-t^2)$, we also have integral representations of $\psi\left({\bf k};m\right)$ associated with $2$-posets ({\cite[p.3128]{CeXuZhao}}) and the following lemma. 

\begin{hodai}
For  any admissible index ${\bf l}$ and non-negative integer $a$, we have 
\begin{align*}
&\lim_{z\to 1 }\left\{ {\rm A}({\bf l},\{1\}^a;z) - \sum_{d= 1}^a (-1)^{a-d} \sum_{\substack{{\rm wt}({\bf e})+d=a \\ {\rm dep}({\bf e})=n }}  b({\bf l}^\dagger;{\bf e}) T({\bf l}^\dagger+{\bf e}) {\rm A}(\{1\}^d;z) \right\} \\
&= (-1)^a \sum_{\substack{{\rm wt}({\bf e})=a \\ {\rm dep}({\bf e})=n }}  b({\bf l}^\dagger;{\bf e}) T({\bf l}^\dagger+{\bf e}).
\end{align*}
\end{hodai}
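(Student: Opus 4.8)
The plan is to transcribe the proof of Lemma~\ref{hodai_lim} essentially verbatim, replacing the level~$1$ objects by their level~$2$ counterparts. One works with the iterated integrals attached to $2$-posets in which the $1$-form carried by the label~$1$ is $dt/(1-t^2)$ rather than $dt/(1-t)$; with this choice a totally ordered semi-admissible $2$-poset~$X$ realizes ${\rm A}(W(X);z)$ and an admissible one realizes $T(W(X))$, exactly as is recorded for the values $\psi({\bf k};m)$ in \cite[p.3128]{CeXuZhao}. Since iterated integrals shuffle no matter which $1$-forms are used, the induced map ${\rm A}(\,\cdot\,;z)\colon\mathfrak{H}^1_{\sh}\to{\mathbb R}$ is still a $\mathbb{Q}$-algebra homomorphism, so every formal step in the proof of Lemma~\ref{hodai_lim} survives under the dictionary ${\rm Li}\rightsquigarrow{\rm A}$, $\zeta\rightsquigarrow T$, $\xi\rightsquigarrow\psi$, Lemma~\ref{kt_fcn}$\rightsquigarrow$Lemma~\ref{kt_fcn_lv2}.

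Concretely, I would keep the very same $2$-posets $v_{i,j}$ and elements $w_d=W(v_{a-d,d})\in\mathfrak{H}_{\sh}$. The algebraic identity \eqref{hodai_lim1} is an identity in $\mathfrak{H}_{\sh}$ that involves no integral, so it holds word for word (its proof uses only that $W$ is a ring homomorphism with property \eqref{xab}, followed by a telescoping of the alternating sum). Applying ${\rm A}(\,\cdot\,;z)$ to \eqref{hodai_lim1} and letting $z\to1$ then gives, in place of \eqref{hodai_lim2},
\[
\lim_{z\to1}\Bigl\{{\rm A}({\bf l},\{1\}^a;z)-\sum_{d=1}^{a}(-1)^{a-d}\,{\rm A}(W(v_{a-d,0});z)\,{\rm A}(\{1\}^d;z)\Bigr\}=(-1)^a\,T(W(v_{a,0})).
\]
Finally one identifies $T(W(v_{a-d,0}))=\psi({\bf l};a-d+1)$ for $0\le d\le a$ via the level~$2$ $2$-poset representation, and rewrites this value by Lemma~\ref{kt_fcn_lv2} exactly as $\zeta(W(v_{a-d,0}))$ is rewritten at the end of the proof of Lemma~\ref{hodai_lim}; the case $d=0$ turns the right-hand side above into $(-1)^a\sum_{{\rm wt}({\bf e})=a,\ {\rm dep}({\bf e})=n}b({\bf l}^\dagger;{\bf e})\,T({\bf l}^\dagger+{\bf e})$, and replacing each ${\rm A}(W(v_{a-d,0});z)$ in the finite sum by its limit $T(W(v_{a-d,0}))$ yields the asserted equality.

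The step that genuinely needs checking---and the one I expect to be the main obstacle---is that last replacement, i.e. the level~$2$ analogue of \eqref{hodai_lim3}:
\[
\lim_{z\to1}\bigl|{\rm A}(W(v_{a-d,0});z)-T(W(v_{a-d,0}))\bigr|\cdot\bigl|{\rm A}(\{1\}^d;z)\bigr|=0 .
\]
From Lemma~\ref{hodai5.1} one gets ${\rm A}(\{1\}^d;z)=\frac1{d!}\bigl(\log\frac{1+z}{1-z}\bigr)^d$, so ${\rm A}(\{1\}^d;z)=O\bigl((\log\tfrac1{1-z})^d\bigr)$ as $z\to1^-$, and it therefore suffices to prove the level~$2$ version of the estimate of \cite[p.311]{ikz}: for every admissible index ${\bf k}$ there is an integer $J\ge0$ with ${\rm A}({\bf k};z)-T({\bf k})=O\bigl((1-z)(\log(1-z))^{J}\bigr)$ as $z\to1^-$. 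I would prove this directly from the defining series, writing $T({\bf k})-{\rm A}({\bf k};z)=2^r\sum_{0<m_1<\cdots<m_r}\frac{1-z^{m_r}}{m_1^{k_1}\cdots m_r^{k_r}}$ (the sum restricted by $m_i\equiv i\pmod2$), using $1-z^{m_r}\le\min\{1,(1-z)m_r\}$, splitting the sum at $m_r\asymp(1-z)^{-1}$, and bounding the inner sum $\sum_{m_1<\cdots<m_{r-1}<m_r}\prod_{i<r}m_i^{-k_i}$ by $O\bigl((\log m_r)^{r-1}\bigr)$; since $k_r\ge2$ both pieces are $O\bigl((1-z)(\log\tfrac1{1-z})^{r}\bigr)$, so $J={\rm dep}({\bf k})$ works. (Alternatively, expand ${\rm A}({\bf k};z)$ as a $\mathbb{Q}$-linear combination of multiple polylogarithms evaluated at $\pm z$ and invoke \cite[p.311]{ikz} directly.) With this estimate in hand the argument closes exactly as in the proof of Lemma~\ref{hodai_lim}.
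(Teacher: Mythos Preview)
Your proposal is correct and follows exactly the approach the paper itself indicates: the paper omits the proof of this lemma, noting only that one replaces $\omega_1(t)=dt/(1-t)$ by $dt/(1-t^2)$ and repeats the argument of Lemma~\ref{hodai_lim} verbatim with the level~$2$ dictionary. Your explicit verification of the level~$2$ analogue of \eqref{hodai_lim3} (the $O\bigl((1-z)(\log(1-z))^J\bigr)$ estimate for ${\rm A}({\bf k};z)-T({\bf k})$) fills in the one detail the paper leaves implicit.
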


\noindent {\bf Acknowledgements.}
The author would like to thank Hirofumi Tsumura, Masanobu Kaneko, Yasuo Ohno, Kentaro Ihara, Maki Nakasuji and Yuna Baba for their various comments. 
The author also thanks Yu Katagiri for his careful reading this manuscript.
 This work was supported in part by JSPS KAKENHI Grant Numbers JP19K23396. 


\begin{bibdiv}
\begin{biblist}
\bib{ak_fcn}{article}{
   author={Arakawa, Tsuneo},
   author={Kaneko, Masanobu},
   title={Multiple zeta values, poly-Bernoulli numbers, and related zeta
   functions},
   journal={Nagoya Math. J.},
   volume={153},
   date={1999},
   pages={189--209},
}
\bib{hof_1992}{article}{
   author={Hoffman, Michael E.},
   title={Multiple harmonic series},
   journal={Pacific J. Math.},
   volume={152},
   date={1992},
   number={1},
   pages={275--290},
}
\bib{hof_1997}{article}{
   author={Hoffman, Michael E.},
   title={The algebra of multiple harmonic series},
   journal={J. Algebra},
   volume={194},
   date={1997},
   number={2},
   pages={477--495},
}
\bib{ikz}{article}{
   author={Ihara, Kentaro},
   author={Kaneko, Masanobu},
   author={Zagier, Don},
   title={Derivation and double shuffle relations for multiple zeta values},
   journal={Compos. Math.},
   volume={142},
   date={2006},
   number={2},
   pages={307--338},
}
\bib{kaneko_pbn}{article}{
   author={Kaneko, Masanobu},
   title={Poly-Bernoulli numbers},
   journal={J. Th\'{e}or. Nombres Bordeaux},
   volume={9},
   date={1997},
   number={1},
   pages={221--228},
}
\bib{kt_fcn}{article}{
   author={Kaneko, Masanobu},
   author={Tsumura, Hirofumi},
   title={Multi-poly-Bernoulli numbers and related zeta functions},
   journal={Nagoya Math. J.},
   volume={232},
   date={2018},
   pages={19--54},
}
\bib{kt_fcn_lv2}{article}{
   author={Kaneko, Masanobu},
   author={Tsumura, Hirofumi},
   title={Zeta functions connecting multiple zeta values and poly-Bernoulli
   numbers},
   conference={
      title={Various aspects of multiple zeta functions---in honor of
      Professor Kohji Matsumoto's 60th birthday},
   },
   book={
      series={Adv. Stud. Pure Math.},
      volume={84},
      publisher={Math. Soc. Japan, Tokyo},
   },
   date={[2020] \copyright 2020},
   pages={181--204},
}
\bib{kt_fcn_mzv_lv2}{article}{
   author={Kaneko, Masanobu},
   author={Tsumura, Hirofumi},
   title={On multiple zeta values of level two},
   journal={Tsukuba J. Math.},
   volume={44},
   date={2020},
   number={2},
   pages={213--234},
}
\bib{ky}{article}{
   author={Kaneko, Masanobu},
   author={Yamamoto, Shuji},
   title={A new integral-series identity of multiple zeta values and
   regularizations},
   journal={Selecta Math. (N.S.)},
   volume={24},
   date={2018},
   number={3},
   pages={2499--2521},
}
\bib{ko_poset}{article}{
   author={Kawasaki, Naho},
   author={Ohno, Yasuo},
   title={Combinatorial proofs of identities for special values of
   Arakawa-Kaneko multiple zeta functions},
   journal={Kyushu J. Math.},
   volume={72},
   date={2018},
   number={1},
   pages={215--222},
}
\bib{maneka_lv2}{article}{
   author={Pallewatta, Maneka},
   title={Level two generalization of Arakawa-Kaneko zeta function and poly-cosecant numbers},
   conference={
      title={Various aspects of multiple zeta values},
   },
   book={
      series={RIMS K\^{o}ky\^{u}roku},
   },
   date={2019},
   pages={104--113},
}
\bib{y1}{article}{
   author={Yamamoto, Shuji},
   title={Multiple zeta-star values and multiple integrals},
   conference={
      title={Various aspects of multiple zeta values},
   },
   book={
      series={RIMS K\^{o}ky\^{u}roku Bessatsu, B68},
      publisher={Res. Inst. Math. Sci. (RIMS), Kyoto},
   },
   date={2017},
   pages={3--14},
}
\bib{cexu}{article}{
   author={Xu, Ce},
   title={Duality formulas for Arakawa-Kaneko zeta values and related variants},
   journal={Bull. Malays. Math. Sci. Soc.},
   volume={44},
   date={2021},
   number={5},
   pages={3001--3018},
}
\bib{CeXuZhao}{article}{
   author={Xu, Ce},
   author={Zhao, Jianqiang},
   title={Variants of multiple zeta values with even and odd summation
   indices},
   journal={Math. Z.},
   volume={300},
   date={2022},
   number={3},
   pages={3109--3142},
}
\bib{z_1994}{article}{
   author={Zagier, Don},
   title={Values of zeta functions and their applications},
   conference={
      title={First European Congress of Mathematics, Vol. II},
      address={Paris},
      date={1992},
   },
   book={
      series={Progr. Math.},
      volume={120},
      publisher={Birkh\"{a}user, Basel},
   },
   date={1994},
   pages={497--512},
}
\end{biblist}
\end{bibdiv}


\end{document}